\documentclass[11pt, a4paper]{amsart}
\usepackage[colorlinks, linkcolor=blue, anchorcolor=blue, citecolor=green]{hyperref}
\usepackage{bbm}
\usepackage{quiver}
\usepackage{graphics,epic}
\usepackage{amsmath,amssymb, amsthm,mathrsfs}
\usepackage[all,2cell]{xy}
\usepackage{shorttoc}
\usepackage{tikz}
\usepackage{tikz-cd} 

\usepackage{setspace}
\newtheorem{theorem}{Theorem}[section]
\newtheorem*{theorem*}{Theorem}

\newtheorem{lemma}[theorem]{Lemma}
\newtheorem{proposition}[theorem]{Proposition}

\newtheorem*{conjecture*}{Conjecture}

\newtheorem{example}[theorem]{Example}
\newtheorem{remark}[theorem]{Remark}

\newtheorem{definition}[theorem]{Definition}

\newtheorem{thm}[theorem]{Theorem}
\newtheorem{lem}[theorem]{Lemma}
\newtheorem{prop}[theorem]{Proposition}

\newcommand{\ie}{{\em i.e.}\ }

\newcommand{\opname}[1]{\operatorname{\mathsf{#1}}}

\newcommand{\dimv}{\underline{\dim}\,}

\newcommand{\add}{\opname{add}\nolimits}

\newcommand{\ind}{\opname{ind}}

\newcommand{\old}{\opname{old}}
\newcommand{\new}{\opname{new}}

\newcommand{\cok}{\opname{cok}\nolimits}

\newcommand{\ra}{\rightarrow}

\newcommand{\id}{\mathbf{1}}

\newcommand{\bfA}{\mathbf{A}}
\newcommand{\Hom}{\opname{Hom}}

\newcommand{\Ext}{\opname{Ext}}

\newcommand{\End}{\opname{End}}

\newcommand{\mb}{\mathcal{B}}

\newcommand{\mpik}{\mathcal{P}_k}

\newcommand{\tauni}{\tau^{-1}}

\newcommand{\xra}{\xrightarrow}

\newcommand{\bbk}{\mathbbm{k}}

\newcommand{\Ini}{\mathbf{Ini}}
\newcommand{\Ter}{\mathbf{Ter}}
\newcommand{\din}{\mathbf{d_{in}}}
\newcommand{\dout}{\mathbf{d_{out}}}
\newcommand{\dinT}{\mathbf{d_{in,T}}}
\newcommand{\doutT}{\mathbf{d_{out,T}}}
\newcommand{\dinQ}{\mathbf{d_{in,Q}}}
\newcommand{\doutQ}{\mathbf{d_{out,Q}}}

\newcommand{\Ain}{\mathbf{A_{in}}}
\newcommand{\Aoin}{\mathbf{A^{\circ}_{in}}}
\newcommand{\Aout}{\mathbf{A_{out}}}
\newcommand{\Aoout}{\mathbf{A^{\circ}_{out}}}
\newcommand{\AinT}{\mathbf{A_{in,T}}}

\newcommand{\AoutT}{\mathbf{A_{out,T}}}

\newcommand{\AinQ}{\mathbf{A_{in,Q}}}
\newcommand{\AmutQ}{\mathbf{A_{Q}^{mut}}}

\newcommand{\AfixQ}{\mathbf{A_{Q}^{fix}}}
\newcommand{\AoinQ}{\mathbf{A^{\circ}_{in,Q}}}
\newcommand{\AoutQ}{\mathbf{A_{out,Q}}}

\newcommand{\rad}{\opname{rad}\nolimits}
\newcommand{\Irr}{\opname{Irr}\nolimits}

\newcommand{\IrrA}{\opname{Irr_{A}}\nolimits}
\newcommand{\IrrT}{\opname{Irr_T}\nolimits}
\newcommand{\IrrTo}{\opname{Irr_T^{\circ}}\nolimits}

\begin{document}

\title{Tilting mutation of gentle algebras and  its combinatorial description }\thanks{Partially supported by the National Natural Science Foundation of China (Grant No.  12171397, 12471037) }
\author[Deng]{Difan Deng}
\address{Difan Deng\\Department of Mathematics\\
	Southwest Jiaotong University\\
	610031 Chengdu \\
	P.R.China}
\email{difandeng@my.swjtu.edu.cn}

\author[Geng]{Shengfei Geng}
\address{Shengfei Geng\\Department of Mathematics\\
Sichuan University\\
610064 Chengdu\\
P.R.China}
\email{genshengfei@scu.edu.cn}

\author[Liu]{Pin Liu}
\address{Pin Liu\\Department of Mathematics\\
	Southwest Jiaotong University\\
	610031 Chengdu \\
	P.R.China}
\email{pinliu@swjtu.edu.cn}

\subjclass[2020]{Primary 16G20, 16G60; Secondary 05E10, 16E35}
\keywords{tilting module, cotilting module, gentle algebra, tilting mutation, derived equivalence}
\dedicatory{Dedicated to Professor Liangang Peng on the occasion of his 70th birthday}
\begin{abstract}

Tilting mutation provides a natural way to construct derived-equivalent algebras by replacing an indecomposable summand of a tilting object.
In this paper, we study tilting mutation of gentle algebras via generalized $\mathrm{BB}$-tilting modules. 
For a gentle algebra $A=\bbk Q/\langle I\rangle$ and a vertex $k\in Q_0$, we first give a necessary and sufficient condition, expressed in terms of the arrows and relations incident with $k$, for the corresponding minimal left approximation of $P(k)$ to yield a tilting mutation. 
This criterion applies uniformly to vertices with or without loops. When the mutation exists, we determine the irreducible morphisms between the indecomposable summands of the mutated tilting module and use them to construct explicitly the Gabriel quiver and defining relations of its endomorphism algebra. In particular, we obtain a combinatorial mutation $(Q,I)\mapsto(Q',I')$ of gentle pairs such that $\mu_k^+(A)\cong \bbk Q'/\langle I'\rangle,$ so the resulting algebra is again gentle and derived equivalent to $A$. Finally, we  formulate the dual cotilting mutation and its combinatorial description via opposite gentle pairs.

\end{abstract}

\maketitle
\tableofcontents

\section{Introduction}

Tilting theory provides one of the fundamental mechanisms for constructing and studying derived equivalences between finite-dimensional algebras. Let $A$ be a basic finite-dimensional algebra and write
$A_A=P(k)\oplus\overline T$, where $P(k)$ is an indecomposable projective right $A$-module and $\overline T=\bigoplus_{\ell\neq k}P(\ell)$. Suppose that a minimal left $\add\overline T$-approximation of $P(k)$ fits into an exact sequence
\[
0\longrightarrow P(k)\longrightarrow M_k\longrightarrow P(k)^*\longrightarrow0,
\qquad M_k\in\add\overline T,
\]
and that $T=\overline T\oplus P(k)^*$ is a tilting $A$-module. In this situation we consider the algebra $\mu_k^+(A)=\End_A(T)$, which is derived equivalent to $A$. This leads naturally to two questions: when does such a tilting mutation exist, and how can the resulting endomorphism algebra be described explicitly from the original algebra?

The replacement of indecomposable summands of tilting objects has a long history. The origins of tilting mutation can be traced back to the reflection functors of Bernstein, Gelfand, and Ponomarev for hereditary algebras \cite{BGP},
which were subsequently generalized to arbitrary finite-dimensional algebras by Auslander, Platzeck, and Reiten~\cite{APR} through the construction of APR-tilting modules. Brenner and Butler~\cite{BB1980} further extended this framework by allowing the replacement of any indecomposable projective non-injective module, leading to the so-called BB-tilting modules. 
The theory of exchange complements  for almost complete tilting modules was developed further in \cite{ HU1989, RS91,CHU94,HU1998}. These ideas naturally evolved into the mutation theories of cluster-tilting and silting objects~\cite{BMRRT, BMR, BIRS, IY, IR, KY2011, AI12}, which now play a central role in modern representation theory and cluster algebras.

Explicit descriptions of endomorphism algebras under mutation have also been studied. Oppermann gave a quiver description for silting mutation in the dg-setting \cite{Opp17}, while Fosse developed a purely combinatorial procedure for tilting mutation of suitable algebras on loop-free vertices via tilting complexes \cite{Fosse}, and Aihara introduced mutations of SB quivers and used them to give a combinatorial description of tilting mutation for symmetric special biserial algebras \cite{A}. These developments provide the general mutation-theoretic background for the problem considered here.

For gentle algebras, several more specific results are closely related to the present construction. Gentle algebras arose naturally in the study of iterated tilted algebras \cite{AS87}, notably, this class is closed under derived equivalence; see \cite{S99, SZ03}. In the loop-free setting, Bobi\'nski and Buan gave an explicit combinatorial description of Brenner-Butler tilting for gentle quivers: under a local compatibility condition at the mutation vertex, a reflection of the bound quiver realizes the endomorphism algebra of the corresponding BB--tilting module \cite{BB12}. Ladkani further characterized positive and negative algebra mutations of loop-free gentle algebras in terms of maximal nonzero paths and related these mutations to Brenner-Butler tilting \cite{Ladkani11}. From a geometric perspective, Baur and Coelho Sim\~{o}es constructed a surface model for the module category of an arbitrary gentle algebra \cite{BS21}. Building on the geometric model of the derived category of a gentle algebra developed in \cite{OPS}, Chang and Schroll gave a geometric interpretation of silting mutation in terms of transformations of graded arcs \cite{CSc23}. Chang subsequently studied complements and tilting completion for gentle algebras \cite{Chang24}. More recently, Saleh introduced an involutive mutation operation on admissible ideals, providing a combinatorial framework for producing new admissible and gentle bound quiver algebras; the relation of this construction to derived equivalence is posed as a direction for further investigation \cite{Saleh26}. Thus local mutation phenomena for gentle algebras have appeared in several algebraic and geometric forms. The present work differs in deriving the mutation directly from the exchange sequence of a projective summand of the regular module, determining the irreducible morphisms and defining relations path-theoretically, and treating mutation vertices with loops as well as loop-free vertices.

Continuing our study of tilting and derived structures of gentle algebras \cite{DGL26}, we now focus on this local algebraic mutation problem. Our previous work gives a geometric description of endomorphism algebras of tilting modules over gentle algebras. In contrast, the present paper starts from a single projective summand $P(k)$ of the regular module and asks whether its exchange replacement exists and, when it does, how to recover the Gabriel quiver and defining relations directly from the original gentle pair $(Q,I)$. Thus the emphasis here is on an intrinsic, path-theoretic and local construction of $\mu_k^+(A)$ rather than on a global surface realization.

Our first main result gives a complete existence criterion. Let $i:P(k)\to M_k$ be the minimal left $\add\overline T$-approximation determined by the relevant nonzero paths ending at $k$. Then $\overline T\oplus\cok i$ is a tilting module if and only if $i$ is a monomorphism. For a gentle algebra this condition admits the following purely local form: for every outgoing arrow $\beta$ at $k$, there exists an incoming arrow $\alpha$  at $k$ such that $\alpha\beta\notin I$, see Theorem~\ref{t: conditions for approximation is injective0}. Thus the existence of the tilting mutation can be decided directly from the arrows and relations incident with $k$. In particular, when $k$ carries a loop, the criterion reduces to the condition that exactly one non-loop arrow enters $k$.

Assume now that the mutation exists and write $T=\overline T\oplus P(k)^*$. We determine all irreducible morphisms between the indecomposable summands of $T$. These morphisms are described explicitly by paths in $Q$ and fall into four classes, according to whether the source or target is the exchanged summand $P(k)^*$. This description yields the Gabriel quiver $Q_T$ of $\End_A(T)$. We then determine all quadratic zero compositions (see Definition~\ref{d:quadratic relations}) among the corresponding irreducible morphisms and prove that they generate the defining ideal. Consequently, the mutated endomorphism algebra has an explicit bound-quiver presentation
\[
\mu_k^+(A)=\End_A(T)\cong \bbk Q_T/\langle I_\mb\rangle,
\]
where both $Q_T$ and $I_\mb$ are obtained directly from paths and relations in $(Q,I)$, see Theorem~\ref{t: qt-ib is a gentle pair}. In particular, the resulting algebra is again gentle. 

The intrinsic description of $Q_T$ and $I_\mb$ can be reformulated as a direct combinatorial operation on gentle pairs. We therefore obtain a mutation
\[
\mu_k^+:(Q,I)\longmapsto(Q',I')
\]
whose associated bound-quiver algebra is isomorphic to $\mu_k^+(A)$, see Theorem~\ref{t: tilitng mutation in general}. The construction is valid both at loop-free vertices and at vertices carrying loops, and we give explicit local forms according to whether the mutation vertex belongs to a loop and/or a $2$-cycle. 

Finally, we consider the dual cotilting construction. Passing to the opposite gentle pair transforms cotilting mutation into tilting mutation. This gives a combinatorial cotilting mutation
\[
\mu_k^-(Q,I)=\bigl(\mu_k^+(Q^{\operatorname{op}},I^{\operatorname{op}})\bigr)^{\operatorname{op}},
\]
and identifies the corresponding cotilting mutation algebra with the bound-quiver algebra determined by this pair. Hence the tilting mutation developed in the main part of the paper also provides, by duality, an explicit combinatorial description of cotilting mutation for gentle algebras.

The paper is organized as follows. Section~\ref{s:s2 pre} fixes conventions concerning path morphisms, relative irreducible morphisms, and tilting mutation. Section~\ref{s:s3 def of gentle} recalls gentle algebras and establishes the path-theoretic lemmas used throughout the paper. Section~\ref{s:s4 tilting mutation} determines the minimal left approximation of $P(k)$, proves the necessary and sufficient condition for the existence of tilting mutation, and constructs the exchange exact sequence. Section~\ref{s:s5 irreducible} determines the  irreducible morphisms and the relations  among the indecomposable summands of the mutated tilting module. Section~\ref{s:s6 endo} uses these morphisms to compute the Gabriel quiver and defining  all quadratic zero compositions of $\End_A(T)$. Section~\ref{s:s7 ex} reformulates this intrinsic description as an explicit combinatorial tilting mutation of gentle pairs and analyzes the relevant local configurations. Moreover, we provided some examples in this section. Finally, Section~\ref{s:s8 cotilting} introduces cotilting mutation and obtains its combinatorial description from tilting mutation of the opposite gentle pair.

\section{Preliminaries}\label{s:s2 pre}
\subsection{Conventions}

Throughout the paper, $\bbk$ denotes an algebraically closed field. By an algebra we mean a basic finite-dimensional $\bbk$-algebra. All modules are finitely generated right modules unless otherwise stated.
For a finitely generated $A$-module $M$, we denote by $|M|$ the number of isomorphism classes of indecomposable direct summands in a basic decomposition of $M$.

Let $A=\bbk Q/J$ be a basic finite-dimensional algebra, where $Q=(Q_0,Q_1)$ be a finite quiver and $J$ is an admissible ideal of $\bbk Q$.
For an arrow $\alpha\in Q_1$, we denote its source and target by $s(\alpha)$ and $t(\alpha)$, respectively. 
An arrow $\alpha$ is called a {\it loop} if $s(\alpha)=t(\alpha)$, else will be called a {\it non-loop} arrow.

Paths are read from left to right. Thus, if
\[
a\xrightarrow{\alpha}b\xrightarrow{\beta}c,
\]
then the corresponding path from $a$ to $c$ is denoted by $\alpha\beta$. We denote the source and target of a path $p$ by $s(p)$ and $t(p)$, respectively. A path $p$ is called  \emph{nonzero} if $p\notin J$.

Let $p=\alpha_1\cdots\alpha_s$ be a  path with $\alpha_i\in Q_1$ for each $1\leq i\leq s$. $s$ is called the length of $p$. $p$ is called a trivial path if $s=0$. For $s>0$,
 we denote its initial and terminal arrows by $\Ini(p)$ and $\Ter(p)$, respectively.    Let $v\in Q_0$. We call $v$ an \emph{internal vertex} of $p$ if there exist paths $p_1:s(p)\leadsto v$ and $p_2:v\leadsto t(p)$, both of positive length, such that $p=p_1p_2$.
In particular, for $a,b\in Q_0$, we denote by  \begin{itemize}
    \item $\mathcal{P}_k(a,b)$: the set of all nonzero paths of positive length from $a$ to $b$ whose internal vertices, if any, are all equal to $k$.
\end{itemize}
This set will play a central role in our combinatorial description of the mutation at $k$.

For $k\in Q_0$, we use the following notation:
\begin{itemize}
  \item $\Ain(k)$ is the set of incoming arrows at $k$;
  \item $\Aoin(k)$ is the set of non-loop incoming arrows at $k$;
  \item $\Aout(k)$ is the set of outgoing arrows at $k$;
  \item $\Aoout(k)$ is the set of non-loop outgoing arrows at $k$;
  \item $\din(k)$ is the number of incoming arrows at $k$;
  \item $\dout(k)$ is the number of outgoing arrows at $k$;
  \item $N^-(k):=\{s(\alpha)\mid \alpha\in\Aoin(k)\}$ is the set of predecessors of $k$ distinct from $k$.
  \item $N^+(k):=\{t(\beta)\mid \beta\in\Aoout(k)\}$ is the set of successors of $k$ distinct from $k$.
\end{itemize}

\subsection{Path morphisms}
\label{ss:path-morphisms}
 Let $\{e_i\mid i\in Q_0\}$ be the complete set of primitive orthogonal idempotents corresponding to the vertices of $Q$. For $i\in Q_0$, set $P(i):=e_iA$. Thus $P(i)$ is the indecomposable projective right $A$-module corresponding to $i$. 

For $i,j\in Q_0$, there is a natural $\bbk$-linear isomorphism
\[
\Hom_A(P(i),P(j)) \cong e_jAe_i.
\]
Consequently, every nonzero path $p:j\leadsto i$ determines an $A$-module homomorphism
\[
\varphi_p:P(i)\longrightarrow P(j)
\]
given by left multiplication: $\varphi_p(x)=px$ for $x\in e_iA$. We refer to $\varphi_p$ as the \emph{path morphism induced by $p$}.

The correspondence between paths and path morphisms is compatible with composition. More precisely, suppose that $p_1:j\leadsto i$ and $p_2:\ell\leadsto j$ are paths. Then
\[
\varphi_{p_2p_1}=\varphi_{p_2}\varphi_{p_1}=\varphi_{p_2}\circ\varphi_{p_1}.
\]
Thus, if a path $p$ admits a factorization $p=qr$, then
$\varphi_p=\varphi_q\varphi_r.$
Here the composition of morphisms denotes ordinary composition of maps. 

\subsection{Relative irreducible morphisms}
\label{ss:relative-irreducible-morphisms}

Let $M,N$ be finitely generated $A$-modules. We denote by $\rad_A(M,N)$ the radical of $\Hom_A(M,N)$.

Let $T$ be a finitely generated $A$-module. Define
\[
\rad_T^2(M,N)
:=
\sum_{X\in\ind(\add T)}
\rad_A(X,N)\circ\rad_A(M,X).
\]
Equivalently, $\rad_T^2(M,N)$ is the subspace of $\rad_A(M,N)$ spanned by all compositions $M\xrightarrow{f}X\xrightarrow{g}N$, where $X\in\add T$ and both $f$ and $g$ are radical morphisms.

The space of irreducible morphisms relative to $T$ is defined by
\[
\Irr_T(M,N)
:=
\rad_A(M,N)/\rad_T^2(M,N).
\]
For convenience, we also write
\[
\Irr_T^\circ(M,N)
:=
\rad_A(M,N)\setminus\rad_T^2(M,N).
\]
Thus $\Irr_T^\circ(M,N)$ is a set of morphisms rather than a vector space. A morphism belonging to $\Irr_T^\circ(M,N)$ will be called \emph{$T$-irreducible}.
For a morphism $f\in\rad_A(M,N)$, we denote its residue class in $\Irr_T(M,N)$ by $\overline f$.

Let $\mathcal P_A:=\add A$ be the category of finitely generated projective $A$-modules. For projective modules $P$ and $Q$, define
\[
\rad_{\mathcal P_A}^2(P,Q)
:=
\sum_{X\in\operatorname{ind}(\mathcal P_A)}
\rad_A(X,Q)\circ\rad_A(P,X),
\]
and
\[
\Irr_A(P,Q)
:=
\rad_A(P,Q)/\rad_{\mathcal P_A}^2(P,Q).
\]
Similarly, we write
\[
\Irr_A^\circ(P,Q)
:=
\rad_A(P,Q)\setminus\rad_{\mathcal P_A}^2(P,Q).
\]

For vertices $i,j\in Q_0$, the space $\Irr_A(P(i),P(j))$ is naturally identified with the corresponding arrow space of the Gabriel quiver of $A$. In particular, an arrow $\alpha:j\longrightarrow i$ corresponds to the irreducible morphism $\varphi_\alpha:P(i)\longrightarrow P(j)$.

\subsection{Tilting mutation}
\label{ss:tilting-mutation}

Recall that a finitely generated $A$-module $T$ is a \emph{tilting module} if
\begin{enumerate}
    \item $\operatorname{pd}_A T\leq1$;
    \item $\Ext_A^1(T,T)=0$;
    \item there exists an exact sequence
    \[
    0\longrightarrow A
    \longrightarrow T_0
    \longrightarrow T_1
    \longrightarrow0
    \]
    with $T_0,T_1\in\add T$.
\end{enumerate}
For a basic finite-dimensional algebra $A$, assuming conditions~(1) and~(2), condition~(3) is equivalent to $|T|=|A|$.

\begin{definition}\label{def:tilting-mutation}
Let $k\in Q_0$, set $P(k)=e_kA$, and write
\[
A_A=P(k)\oplus\overline T,
\qquad
\overline T:=\bigoplus_{\ell\neq k}P(\ell).
\]
Suppose that there is an exact sequence
\[
0\longrightarrow P(k)
\xrightarrow{\,f\,}
M_k
\longrightarrow P(k)^*
\longrightarrow0,
\]
where $f$ is a minimal left $\add\overline T$-approximation,
$M_k\in\add\overline T$, and
$T:=\overline T\oplus P(k)^*$ is a basic tilting $A$-module. We define
\[
\mu_k^+(A):=\End_A(T)
\]
and call it the \emph{tilting mutation of $A$ at $k$}. The module $P(k)^*$ is called the \emph{exchange complement} of $P(k)$.
\end{definition}


\begin{remark}\label{r: pk* is tauni Sk}
   By  \cite[Proposition 7.4]{KY2014},  $P(k)^*\cong \tauni S_k^+$, where $S_k^+=D(A/A(1-e_k)A)$. When  there are no loops in the quiver of $Q$ at the $k$, this specializes to the classical $\mathrm{BB}$-tilting module \cite{BB1980} and $\mathrm{APR}$-tilting module \cite{APR}. In view of this structural correspondence, we refer to $T$ as a \emph{generalized $\mathrm{BB}$-tilting module}. 
\end{remark}


\section{Gentle algebras}\label{s:s3 def of gentle}

\subsection{Gentle algebras}
A finite-dimensional $\bbk$-algebra $A$ is \emph{gentle} if it admits a presentation $A=\bbk Q/\langle I\rangle$ satisfying the following conditions:
\begin{itemize} 
    \item[(G1)] Each vertex of $Q$ is the source of at most two arrows and the target of at most two arrows;
    \item[(G2)] For each arrow $\alpha$, there is at most one arrow $\beta$ such that $\alpha\beta\in I$, and at most one arrow $\gamma$ such that $\gamma\alpha\in I$;
    \item[(G3)] For each arrow $\alpha$, there is at most one arrow $\beta$ such that $\alpha\beta\notin I$, and at most one arrow $\gamma$ such that $\gamma\alpha\notin I$;
    \item[(G4)] $I$ is generated by a finite set of paths of length two.
\end{itemize}
The pair $(Q,I)$ is called a \emph{gentle pair}.

If $\rho$ is a loop at a vertex $k$, since $A$ is finite dimensional,  (G4) implies $\rho^2\in  I$. Moreover, for every non-loop incoming arrow $\alpha$ and every non-loop outgoing arrow $\beta$ at $k$, the gentle conditions imply
\[
\alpha\rho\notin  I,
\qquad
\rho\beta\notin I,
\qquad
\alpha\beta\in I.
\]

\subsection{Some elementary properties of paths}
Let $A=KQ/\langle I\rangle$ be a gentle algebra. We begin with several elementary properties of paths that will be used repeatedly below. First, we have the following known result.
\begin{lem}\label{l: p_1+p_2 in I infer pi in I}
    Let $a,b\in Q_0$, and let $p_1,\ldots,p_t$ be pairwise distinct paths in $Q$ from $a$ to $b$. Let $\lambda_1,\ldots,\lambda_t\in\bbk^\times$. If
\[
\lambda_1p_1+\cdots+\lambda_tp_t\in\langle I\rangle,
\]
equivalently, if \[
\lambda_1\varphi_{p_1}+\cdots+\lambda_t\varphi_{p_t}=0,
\]
then $p_i\in\langle I\rangle$ for every $1\leq i\leq t$.
   \end{lem}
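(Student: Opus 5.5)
The plan is to use that $\langle I\rangle$ is generated by paths, so it is a \emph{monomial ideal}: as a $\bbk$-vector space it is spanned by the paths it contains. First I would show that $\langle I\rangle$ equals the $\bbk$-span of the set $\mathcal Z$ of paths $p$ that contain a subpath $\gamma\delta\in I$ of length two, with $\gamma,\delta\in Q_1$. One inclusion holds because each such path $p=u(\gamma\delta)v$ lies in the two-sided ideal generated by $I$. For the converse, a general element of $\langle I\rangle$ is a finite sum $\sum_j c_j u_j r_j v_j$ with $c_j\in\bbk$, $r_j\in I$ and $u_j,v_j$ paths. Each nonzero product $u_jr_jv_j$ is a single path containing $r_j$, or is zero if the concatenation is not defined. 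This uses (G4): $I$ consists of paths of length two, not linear combinations of them.

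Next I would use that the set of all paths is a $\bbk$-basis of $\bbk Q$. Then $\mathcal Z$ is a subset of this basis spanning $\langle I\rangle$, and $\bbk Q=\langle I\rangle\oplus\operatorname{span}(\text{paths}\notin\mathcal Z)$. Now suppose $\lambda_1p_1+\cdots+\lambda_tp_t\in\langle I\rangle$ with the $p_i$ pairwise distinct and $\lambda_i\neq0$. Project onto the complementary span along the decomposition. The image is $\sum_{p_i\notin\mathcal Z}\lambda_ip_i$, and it must vanish. Since distinct paths are linearly independent and each $\lambda_i\neq 0$, no $p_i$ lies outside $\mathcal Z$, so every $p_i\in\langle I\rangle$.

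Finally, the equivalence with $\sum\lambda_i\varphi_{p_i}=0$ follows from the identification $\Hom_A(P(i),P(j))\cong e_jAe_i$ recalled in Subsection~\ref{ss:path-morphisms}. The morphism $\sum\lambda_i\varphi_{p_i}$ is left multiplication by the class of $\sum\lambda_ip_i$ in $A=\bbk Q/\langle I\rangle$. It vanishes if and only if it kills $e_i$ for the relevant source vertex, that is, if and only if $\sum\lambda_ip_i\in\langle I\rangle$.

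The only step needing care is the first one, the monomial-span description of $\langle I\rangle$. It is routine once one notes that products of paths are paths or zero. No gentle conditions beyond (G4) are needed.
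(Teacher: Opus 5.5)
Your proof is correct. The paper gives no argument for this lemma and simply quotes it as a known result; your reasoning is the standard justification behind it: by (G4) the ideal $\langle I\rangle$ is monomial, so the paths containing a length-two relation from $I$ form a $\bbk$-basis of it, and since all paths form a basis of $\bbk Q$, a combination of pairwise distinct paths with nonzero coefficients lies in $\langle I\rangle$ only if each path does. Your translation to $\sum\lambda_i\varphi_{p_i}=0$ is also right: evaluate at the idempotent generating the domain, here $e_b$ with $b$ the common terminal vertex of the $p_i$. As you observe, the statement holds for any monomial algebra, not only gentle ones.
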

It follows that if $p,~q$ are nonzero paths representing elements of $e_aAe_b$ such that $\varphi_p=\varphi_q\neq 0$, then $p=q$.

\begin{definition}
    Let $p$ and $q$ be two nonzero paths in $Q$. 
    \begin{enumerate}
        \item We say that \emph{$q$ ends with $p$} if there exists a path $w$ such that $q = w p$.
        \item We say that \emph{$q$ starts with $p$}  if there exists a path $w$ such that $q = p w$.
    \end{enumerate}
\end{definition}

\begin{lem}\label{l:p is the shortest}
    Let $a,b,k\in Q_0$ with $k\notin\{a,b\}$, let $\alpha:b\to k$ be an arrow, and let $p,q$ be nonzero paths from $a$ to $b$ such that $$p\alpha, q\alpha\in\langle I\rangle.$$ Then
    \begin{enumerate}
        \item either $p$ ends with $q$ or $q$ ends with $p$;
        \item if $p\in\mathcal{P}_k(a,b)$, then $q$ ends with $p$.
    \end{enumerate}
\end{lem}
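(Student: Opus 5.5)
We use that $I$ is generated by paths of length two (G4). A path is therefore nonzero exactly when none of its length-two subpaths lies in $I$. We then walk backwards from $\alpha$ along $p$ and $q$, using the uniqueness statements (G2) and (G3).

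\emph{Step 0: $p$ and $q$ have positive length.} If $p$ were trivial, then $a=b$ and $p\alpha=\alpha\notin\langle I\rangle$, contradicting the hypothesis. The same argument applies to $q$.

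\emph{Step 1: locating the relation.} Since $p$ is nonzero, no length-two subpath of $p$ lies in $I$. Since $p\alpha\in\langle I\rangle$ and $\langle I\rangle$ is generated by length-two paths, some length-two subpath of $p\alpha$ lies in $I$. The only subpath not already inside $p$ is $\Ter(p)\alpha$, so $\Ter(p)\alpha\in I$. Likewise $\Ter(q)\alpha\in I$. By (G2) there is at most one arrow $\gamma$ with $\gamma\alpha\in I$, hence $\Ter(p)=\Ter(q)$.

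\emph{Step 2: induction backwards.} Write $p=\gamma_1\cdots\gamma_s$ and $q=\delta_1\cdots\delta_r$. We show by induction on $j\le\min(s,r)$ that $\gamma_{s-j+1}=\delta_{r-j+1}$; the case $j=1$ is Step 1. Suppose the last $j$ arrows agree and $j<\min(s,r)$. Put $\varepsilon:=\gamma_{s-j+1}=\delta_{r-j+1}$. Because $p$ and $q$ are nonzero, $\gamma_{s-j}\varepsilon\notin I$ and $\delta_{r-j}\varepsilon\notin I$. By (G3) there is at most one arrow $\eta$ with $\eta\varepsilon\notin I$, so $\gamma_{s-j}=\delta_{r-j}$.

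Consequently the shorter of $p,q$ coincides with the terminal segment of the longer one. That is, either $p$ ends with $q$ or $q$ ends with $p$, which proves (1).

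\emph{Step 3: proving (2).} Let $p\in\mathcal P_k(a,b)$, and suppose $q$ does not end with $p$. By (1), $p=wq$ for some path $w$ of positive length; note $q$ has positive length by Step 0. Then $s(q)=a$ is an internal vertex of $p$. By the definition of $\mathcal P_k(a,b)$, every internal vertex of $p$ equals $k$, so $a=k$. This contradicts the assumption $k\notin\{a,b\}$, so $q$ ends with $p$.

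\emph{Main obstacle.} The only delicate point is Step 1: justifying that the zero relation must occur at the junction $\Ter(p)\alpha$. This relies on $\langle I\rangle$ being a monomial ideal generated in degree two, so that membership of a path in $\langle I\rangle$ is detected by its length-two subpaths. The statement of Lemma~\ref{l: p_1+p_2 in I infer pi in I} reflects the same monomial nature of $\langle I\rangle$. Once Step 1 is in place, the rest is bookkeeping with (G2) and (G3).
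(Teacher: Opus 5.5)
Your proof is correct and follows the paper's argument essentially verbatim: you force $\Ter(p)=\Ter(q)$ via (G2) at the junction with $\alpha$, propagate the agreement backwards with (G3), and for (2) observe that $p=wq$ with $w$ nontrivial would make $a\neq k$ an internal vertex of $p$. Your Step 0, which rules out trivial $p$ and $q$ (needed so that $\Ter(p)$ exists and so that $a$ is genuinely internal in $p=wq$), makes explicit a point the paper leaves implicit.
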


\begin{proof}
Let $\beta_p$ and $\beta_q$ be the terminal arrows of $p$ and $q$. Since $p$ and $q$ are nonzero, they contain no subpath belonging to $I$. Since $I$ is generated by a finite set of paths of length two, the relations $p\alpha,q\alpha\in\langle I\rangle$ imply
\[
\beta_p\alpha,\beta_q\alpha\in I.
\]
By~(G2), we have $\beta_p=\beta_q$.

Write $p=p'\beta$ and $q=q'\beta$ for this common terminal arrow $\beta$. If both $p'$ and $q'$ have positive length, let $\delta_p$ and $\delta_q$ be their terminal arrows. Since $p$ and $q$ are nonzero, neither $\delta_p\beta$ nor $\delta_q\beta$ belongs to $\langle I\rangle$. By~(G3), we have $\delta_p=\delta_q$.

Repeating this argument backwards, the two paths agree from the right until one of them is exhausted. Hence one ends with of the other.

For~(2), suppose that $p\in\mathcal P_k(a,b)$. By~(1), either $q=wp$ or $p=wq$. If $p=wq$ with $w$ nontrivial, since $p$ and $q$ have the same source $a$, the path $w$ is a nontrivial path from $a$ to $a$. Thus $a$ occurs as an internal vertex of $p$, contradicting $p\in\mathcal P_k(a,b)$ because $a\neq k$. Hence $q=wp$, in particular, $q$ ends with $p$.
\end{proof}


\section{Existence of tilting mutation}\label{s:s4 tilting mutation}
\label{sec:tilting-mutation}
Throughout this section, assume that $Q$ is connected and that $|Q_0|\ge 2$.
Let $k\in Q_0$ and set $$\overline{T}=\bigoplus\limits_{l\neq k}P(l).$$
We first determine the minimal left $\add\overline T$-approximation of $P(k)$ and then characterize when it is a monomorphism.
 \subsection{The minimal left approximation}
For $a,k\in Q_0$, recall that 
\begin{itemize}
    \item $\mathcal{P}_k(a,k)$ is the set of all nonzero paths with length at least one from $a$ to $k$ that have no internal vertices other than possibly $k$.
\end{itemize}
 The following lemma describes the minimal left $\add\overline T$-approximation of $P(k)$.
\begin{lem}\label{lem:min-left-approx-form}
\begin{enumerate}
    \item For each $a\neq k$,  the residue classes
    $$\left\{
\overline{\varphi_p}
\ \middle|\
p\in\mathcal P_k(a,k)
\right\}$$
form a $\bbk$-basis of $\Irr_{\overline T}(P(k),P(a))$.
\item Let
$\bigcup_{a\in N^-(k)}\mathcal P_k(a,k)=\{p_1,\ldots,p_t\},
\quad
a_j:=s(p_j).$
Then $$i:=(\varphi_{p_j})_{j=1}^t:
P(k)\longrightarrow
M_k:=\bigoplus_{j=1}^tP(a_j)$$
is the minimal left $\add\overline{T}$-approximation of $P(k)$. Moreover, $t\leq2$.
\end{enumerate}

\end{lem}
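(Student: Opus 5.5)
Part (1) is a statement about $\Hom_A(P(k),P(a))\cong e_aAe_k$. This space has a basis given by the nonzero paths from $a$ to $k$ (Lemma~\ref{l: p_1+p_2 in I infer pi in I}). Since $a\neq k$, every such path has positive length, so $\rad_A(P(k),P(a))$ is spanned by the path morphisms $\varphi_q$ for nonzero paths $q:a\leadsto k$.

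Every such $q$ decomposes uniquely as $q=q_1q_2$, where $q_2$ is the maximal terminal segment all of whose internal vertices equal $k$; equivalently, $q_1$ ends at the last vertex $\ell\neq k$ visited before the final run through $k$. There are two cases.
\begin{itemize}
\item If $q_1$ has positive length, then $\varphi_q=\varphi_{q_1}\varphi_{q_2}$ factors through $P(\ell)\in\add\overline T$ via two radical maps, so $\varphi_q\in\rad^2_{\overline T}(P(k),P(a))$.
\item Otherwise $q\in\mathcal P_k(a,k)$.
\end{itemize}
Hence the classes $\overline{\varphi_p}$ with $p\in\mathcal P_k(a,k)$ span $\Irr_{\overline T}(P(k),P(a))$.

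For linear independence, I will show that $\rad^2_{\overline T}(P(k),P(a))$ is spanned by path morphisms $\varphi_q$ where $q$ passes through some vertex $\ell\neq k$ at an internal position. Any composite $P(k)\to X\to P(a)$ with $X=P(\ell)$, $\ell\neq k$, and both maps radical is a combination of products of nonzero paths $a\leadsto\ell$ and $\ell\leadsto k$ of positive length. Such a product is either zero or a nonzero path with $\ell$ as an internal vertex. A path in $\mathcal P_k(a,k)$ has only $k$ as internal vertex, so it is never of this form. Now suppose a combination of the $\varphi_p$ with $p\in\mathcal P_k(a,k)$ lies in $\rad^2_{\overline T}$. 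Moving everything to one side gives a linear relation among distinct nonzero paths, and Lemma~\ref{l: p_1+p_2 in I infer pi in I} forces all coefficients to vanish. This proves (1).

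For (2), I will use the standard description of minimal left approximations via relative irreducible maps. A map $P(k)\to\bigoplus_a P(a)^{n_a}$ is a minimal left $\add\overline T$-approximation exactly when its components induce a basis of $\Irr_{\overline T}(P(k),P(a))$ for each $a\neq k$; here $P(k)$ is not in $\add\overline T$, so every map to $\overline T$ is radical.
\begin{itemize}
\item Existence of factorisations: every radical map $P(k)\to P(a)$ is a combination of $\varphi_p$ with $p\in\mathcal P_k(a,k)$ plus elements of $\rad^2_{\overline T}$. The latter are combinations of $\varphi_{q_1}\varphi_{q_2}$, and the inner factor $\varphi_{q_2}$ with $q_2\in\mathcal P_k(\ell,k)$ is again a component of $i$. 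By induction on path length, every such map factors through $i$.
\item Minimality: it follows from the components being independent modulo $\rad^2_{\overline T}$, by the usual Nakayama-type argument.
\end{itemize}
Paths in $\mathcal P_k(a,k)$ have last step entering $k$ and first arrow starting at $a\neq k$, hence the first arrow is a non-loop arrow into $k$ or from $a$ to $k$. Thus $a\in N^-(k)$, so only $a\in N^-(k)$ contribute, matching the indexing in (2).

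The bound $t\le2$ is where the gentle combinatorics enter, and I expect it to be the main bookkeeping step. Each $p\in\mathcal P_k(a,k)$ has the form $\alpha\rho^{m}$ with $\alpha\in\Aoin(k)$ and, if a loop $\rho$ at $k$ exists, $m\in\{0,1\}$ since $\rho^2\in I$. Other shapes are excluded because the intermediate vertices must equal $k$. By (G1), at most two arrows enter $k$.
\begin{itemize}
\item If there is no loop, then $t=|\Aoin(k)|\le2$.
\item If there is a loop, it uses one of the incoming slots, so $|\Aoin(k)|\le1$. The candidate paths are $\alpha$ and $\alpha\rho$, the latter nonzero by the remark after the gentle conditions, giving $t\le2$.
\end{itemize}
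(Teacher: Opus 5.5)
Your proposal is correct and follows the paper's proof. Part (1) uses the same internal-vertex factorization together with Lemma~\ref{l: p_1+p_2 in I infer pi in I}, and the bound $t\le 2$ uses the same loop/no-loop case split based on (G1) and $\rho^2\in I$. For the approximation property in (2), your argument via bases of $\Irr_{\overline T}(P(k),-)$ and the factorization $q=q_1q_2$ with $q_2\in\mathcal P_k(\ell,k)$ supplies the details the paper leaves as ``can be checked directly''. The induction on path length is unnecessary there, since $\varphi_{q_2}$ is already a component of $i$.
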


\begin{proof}
For (1), $\Hom_A(P(k),P(a))$ is spanned by all nonzero paths from $a$ to $k$.  If a nonzero path $p:a\leadsto k$ does not belong to $\mathcal{P}_k(a,k)$, then $p$ has an internal vertex $v\neq k$, and therefore $\varphi_p\in\rad^2_{\overline{T}}(P(k),P(a))$. It follows that the residue classes
$\left\{
\overline{\varphi_p}
\ \middle|\
p\in\mathcal P_k(a,k)
\right\}$
span $\Irr_{\overline T}(P(k),P(a))$. Their linear independence follows from Lemma~\ref{l: p_1+p_2 in I infer pi in I}. Thus they form a $\bbk$-basis.

For (2), the fact that $i$ is a minimal $\add\overline{T}$-approximation can be checked directly.

 It remains to show that $t\leq2$. Suppose first that there is no loop at $k$. Then a path in $\mathcal P_k(a,k)$ cannot have an internal vertex and therefore must be an  arrow at $k$. Hence $t\leq2$ by~(G1).

Now suppose that there is a loop $\rho$ at $k$, so $\rho^2\in I$. By~(G1), there is at most one non-loop incoming arrow $\alpha:a\to k$. If such an arrow exists, the only possible paths occurring in the above union are $\alpha$ and $\alpha\rho$. Hence again $t\leq2$.
\end{proof}

    

\subsection{Criteria for the existence of tilting mutation}


\begin{thm}\label{t: conditions for approximation is injective0}
   Let $i: P(k)\ra M_k$ be the minimal left $\add \overline{T}$-approximation.
   Then the following statements are equivalent:
\begin{enumerate}
    \item $\overline{T}\oplus \mathbf{coker} i$ is a tilting $A$-module;
    \item $i$ is a monomorphism;
     \item for every $\beta\in\Aout(k)$, there exists $\alpha\in\Ain(k)$ such that $\alpha\beta\notin I$. 
\end{enumerate}
In particular, if there is a loop at $k$, then $\overline{T}\oplus \mathbf{coker} i$ is a tilting $A$-module if and only if $\din(k)=2$.
\end{thm}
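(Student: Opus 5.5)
The plan is to prove the cycle $(2)\Rightarrow(1)\Rightarrow(2)$ by general tilting arguments, and then $(2)\Leftrightarrow(3)$ by a path computation of $\ker i$ based on the explicit description of $i$ in Lemma~\ref{lem:min-left-approx-form}.

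\emph{$(2)\Rightarrow(1)$.} Assume $i$ is injective and put $X=\cok i$, so that $0\to P(k)\to M_k\to X\to 0$ is exact.
\begin{itemize}
\item $\operatorname{pd}X\le 1$, because $X$ has a projective resolution of length one.
\item $\Ext^1(\overline T,X)=0$, because $\overline T$ is projective.
\item $\Ext^1(X,\overline T)=0$: applying $\Hom(-,\overline T)$ to the sequence, surjectivity of $\Hom(M_k,\overline T)\to\Hom(P(k),\overline T)$ is exactly the left approximation property, and $\Ext^1(M_k,\overline T)=0$.
\item $\Ext^1(X,X)=0$: apply $\Hom(X,-)$. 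Then $\Ext^1(X,X)$ sits between $\Ext^1(X,M_k)=0$ (already shown, since $M_k\in\add\overline T$) and $\Ext^2(X,P(k))=0$ (since $\operatorname{pd}X\le1$).
\end{itemize}
For condition (3) of tilting, add $\overline T$ to the exchange sequence to get $0\to A\to \overline T\oplus M_k\to X\to 0$. This avoids any counting of summands. We also note $X\neq0$, since $P(k)\notin\add\overline T$.

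\emph{$(1)\Rightarrow(2)$.} Suppose $T=\overline T\oplus X$ is tilting but $K=\ker i\neq0$. We show that every morphism from $P(k)$ to a module in $\add T$ vanishes on $K$. For maps into $\add\overline T$ this is the approximation property. For a map $P(k)\to X$: since $P(k)$ is projective, it lifts through $M_k\twoheadrightarrow X$, and the resulting map $P(k)\to M_k$ factors through $i$. Hence no monomorphism $P(k)\to T_0$ with $T_0\in\add T$ exists. This contradicts the coresolution $0\to A\to T_0\to T_1\to0$.

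\emph{$(2)\Leftrightarrow(3)$.} This is the combinatorial core and where I expect the care to be needed. Since $e_kA$ has a basis of nonzero paths starting at $k$ and $i=(\varphi_{p_j})_j$, Lemma~\ref{l: p_1+p_2 in I infer pi in I} shows that $\ker i$ is spanned by the nonzero paths $w$ from $k$ with $p_jw\in\langle I\rangle$ for all $j$. Because $I$ is quadratic monomial and both $p_j$ and $w$ are nonzero, $p_jw\in\langle I\rangle$ means $\Ter(p_j)\Ini(w)\in I$ when $w$ is nontrivial, and is impossible when $w=e_k$ (for $t\ge1$). So $i$ is injective iff $t\ge1$ and for every $\beta\in\Aout(k)$ some $j$ has $p_j\beta\notin\langle I\rangle$. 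If $t=0$, then $k$ is a source and, by connectivity with $|Q_0|\ge2$, has an outgoing arrow; so both (2) and (3) fail.

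It remains to translate ``some $p_j\beta\notin\langle I\rangle$'' into (3).
\begin{itemize}
\item \emph{No loop at $k$.} The $p_j$ are exactly the incoming arrows, so the two conditions coincide.
\item \emph{Loop $\rho$ at $k$ and a non-loop incoming arrow $\alpha$.} Then the $p_j$ are $\alpha$ and $\alpha\rho$, and $\Ain(k)=\{\alpha,\rho\}$.
  \begin{itemize}
  \item For $\beta=\rho$: $\alpha\rho\notin I$ in both formulations.
  \item For non-loop $\beta$: $\alpha\beta\in I$, but $\rho\beta\notin I$, so $\alpha\rho\beta$ is nonzero. This matches (3) with the witness $\rho$.
  \end{itemize}
\item \emph{Loop $\rho$ and no non-loop incoming arrow.} Then $\Ain(k)=\{\rho\}$, and (3) fails at $\beta=\rho$ because $\rho^2\in I$.
\end{itemize}

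Finally, by (G1) and the presence of the loop, $\din(k)=2$ holds exactly when the non-loop incoming arrow $\alpha$ exists. This gives the last assertion of the theorem.
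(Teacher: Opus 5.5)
Your proof is correct and, for $(2)\Leftrightarrow(3)$, follows the paper's route. Both use the explicit form $i=(\varphi_{p_j})_j$ from Lemma~\ref{lem:min-left-approx-form} and split on whether $k$ carries a loop, and you are a bit more careful than the paper in justifying via Lemma~\ref{l: p_1+p_2 in I infer pi in I} that injectivity can be tested on single paths. You also prove $(1)\Leftrightarrow(2)$, which the paper only calls well known: Ext-vanishing follows from the approximation property, and every map from $P(k)$ into $\add T$ kills $\ker i$.

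One wording slip: if $t=0$ and $k$ carries a loop, then $k$ is not a source. That case is nonetheless correctly covered by your third bullet ($\Ain(k)=\{\rho\}$ and $\rho^2\in I$).
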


\begin{proof}
The equivalence (1) $\Leftrightarrow$ (2) is well known. It remains to prove the equivalence of~(2) and~(3).
The module $P(k)$ has a $\bbk$-basis consisting of the nonzero paths starting at $k$. Hence $i$ is a monomorphism if and only if $i(p) \neq 0$ for every nonzero path $p \in e_k A$.

$(3) \Rightarrow (2)$. We first note that $\Ain(k)\neq\varnothing$. Indeed, if $k$ has an outgoing arrow, condition~(3) forces the existence of an incoming arrow. If $k$ has no outgoing arrows, then connectedness of $Q$ and the assumption $|Q_0|\geq2$ imply that $k$ has an incoming arrow.

Suppose first that there is a loop $\rho$ at $k$. Since $\rho^2\in I$, by assumption, we obtain a non-loop incoming arrow $\alpha:a\to k$ such that $\alpha\rho\notin I$.
 Then 
$i$ is $$ P(k) \xra{i={\scriptstyle\left(\begin{smallmatrix} i_1 \\ i_2 \end{smallmatrix}\right)}}P(a)\oplus P(a)$$ 
where $i_1=\varphi_{\alpha}$ and $i_2=\varphi_{\alpha}\varphi_{\rho}=\varphi_{\alpha\rho}$. Let $0\neq p\in e_kA$. If $p$ begins with $\rho$, then $p=\rho\omega$ for some nonzero path $\omega$, and hence $i_1(p)=\varphi_{\alpha}(p)=\alpha p=\alpha\rho\omega\neq 0$. If $p$ does not begin with $\rho$, including the case $p=e_k$, then $i_2(p)=\alpha\rho p\neq 0$. Thus $i(p)\neq0$ for every nonzero $p\in e_kA$, and hence $i$ is injective.

Now suppose that there is no loop at $k$.
Then $$i = (\varphi_\alpha)_{\alpha\in\Ain(k)} : P(k) \to M_k.$$ Since the set $\Ain(k)$ is nonempty, so $i\neq 0$. Let $p \in e_k A$ be a nonzero path. If $p = e_k$, then $i(e_k) = (\alpha)_{\alpha\in\Aoin(k)} \neq 0$. If $p$ has positive length, write $p = \beta p_1$ where $\beta=\Ini(p)$. By condition (3), there exists an incoming arrow $\alpha$ at $k$ such that $\alpha\beta \notin I$. Consequently, $\alpha p = \alpha\beta p_1 \neq 0$ in $A$, which implies the $\alpha$-component of $i(p)$ is nonzero. Therefore $i(p)\neq0$ for every nonzero $p\in e_kA$, so $i$ is injective.

$(2) \Rightarrow (3)$.
Suppose that condition (3) fails. Then there exists an outgoing arrow $\beta: k \to j$ such that $\alpha\beta \in I$ for every incoming arrow $\alpha: a \to k$. Hence $$i(\beta)=(\varphi_\alpha(\beta))_{\alpha\in\Aoin(k)}=(\alpha\beta)_{\alpha\in\Aoin(k)}=0,$$
 contradicting the injectivity of $i$. If there are no incoming arrows at $k$, then $i = 0$, which is likewise not injective. Therefore, $(2)$ implies $(3)$.

 When there is a loop $\rho$ at $k$, the condition (3), applied to $\beta = \rho$, gives an incoming arrow $\alpha$ such $\alpha\rho \notin I$. Since $\rho\rho = \rho^2 \in I$, we have $\alpha\ne\rho$. This yields $\din(k) = 2$, completing the proof.
\end{proof}

\subsection{The exchange exact sequence} \label{ss: exchange exact sequence}
\begin{definition}
Whenever the equivalent conditions of Theorem~\ref{t: conditions for approximation is injective0} hold, set
\[
P(k)^*:=\cok i,
\qquad
T:=\overline T\oplus P(k)^*.
\]
Then the minimal left approximation fits into the exact sequence 
\begin{equation}
0\longrightarrow P(k)
\xrightarrow{\,i\,}
M_k
\xrightarrow{\,\pi\,}
P(k)^*
\longrightarrow0.
\tag{$\bullet$}\label{eq:exchange-general}
\end{equation}
We call this the \emph{exchange exact sequence at $k$}.
\end{definition}
\noindent By the standard exchange theory of almost complete tilting modules, $P(k)^*$ is indecomposable and $\pi$ is a minimal right $\add\overline T$-approximation; see \cite{HU1989, CHU94, HU1998}. In particular, 
by Remark~\ref{r: pk* is tauni Sk}, we know $P(k)^*\cong \tauni S_k^+$, where $S_k^+=D(A/A(1-e_k)A)$. Thus,
$T$ is a basic generalized $\mathrm{BB}$-tilting module and
\[
\mu_k^+(A)=\End_A(T).
\]

In what follows, if there is no loop at $k$, we fix all the incoming arrows at $k$ by
\[
\alpha_j:a_j\to k,
\qquad
1\leq j\leq \din(k).
\]
 Then the exchange exact sequence at $k$ is 
\begin{align}\label{g: exact seq for case without loop}
 0\ra P(k) \xra{i}\bigoplus\limits_{1\leq j\leq \din(k)}P(a_j)\xra{\pi} P(k)^*\ra 0,   
\end{align}
where $i=(i_j)_{1\leq j\leq \din(k)}=(\varphi_{\alpha_j})_{\alpha_j:a_j\ra k}$, $\pi=(\pi_j)_{1\leq j\leq \din(k)}$.  

If $k$ carries a loop $\rho$. Then $\din(k)=2$. 
We fix the other incoming arrow at $k$ by $\alpha: a\ra k$. 
 Then the exchange exact sequence at $k$ is
\begin{align}\label{g: exchange exact seq for the case with loop}
 0\ra P(k) \xra{i={\scriptstyle\left(\begin{smallmatrix} i_1 \\ i_2 \end{smallmatrix}\right)}}P(a)\oplus P(a)\xra{\pi=(\pi_1,\pi_2)} P(k)^*\ra 0,   
\end{align}
where $i_1=\varphi_\alpha$ and
$i_2=i_1\varphi_{\rho}=\varphi_{\alpha}\varphi_{\rho}=\varphi_{\alpha\rho}$.

\section{Irreducible morphisms under tilting mutation}\label{s:s5 irreducible}
Let $A=\bbk Q/\langle I\rangle$ be a gentle algebra, and let $k\in Q_0$ be a vertex at which the tilting mutation exists.
We retain the notation
\[
\overline T=\bigoplus_{\ell\neq k}P(\ell),
\qquad
T=\overline T\oplus P(k)^*,
\]
and the exchange exact sequences in Subsection~\ref{ss: exchange exact sequence}. Having established the existence criterion, we now determine the tilting mutation $\mu_k^+(A)$ explicitly. Using the exchange exact sequence, we describe the irreducible morphisms and the relations between the indecomposable summands of $T$.

\subsection{A criterion for T-irreducibility}
In order to get $T$-irreducible morphisms, we need the following results.
First, by the corresponding definitions, it follows
\begin{lem}
   Let $c, d \in Q_0$ with $k \notin \{c, d\}$ and $p$ be a nonzero path from $c$ to $d$.  Then 
   $$\varphi_p \in \operatorname{Irr}_{\overline{T}}^\circ(P(d), P(c))\iff p\in\mpik(c,d).$$
\end{lem}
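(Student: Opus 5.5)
The plan is to prove the two implications separately, working throughout inside $\Hom_A(P(d),P(c))\cong e_cAe_d$. By Lemma~\ref{l: p_1+p_2 in I infer pi in I}, this space has the nonzero paths from $c$ to $d$ as a basis. Here $\rad_{\overline T}^2(P(d),P(c))$ is the span of all compositions $P(d)\to X\to P(c)$ with $X\in\ind(\add\overline T)$, that is $X=P(v)$ for some $v\neq k$, and with both factors radical. First I would note that $\varphi_p$ is radical. This is automatic if $c\neq d$. If $c=d$, then $p$ has positive length, so $\varphi_p$ is a non-invertible endomorphism of a local module.

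$(\Leftarrow)$ direction, by contraposition. Suppose $p\notin\mpik(c,d)$. Since $p$ is nonzero of positive length, it has an internal vertex $v\neq k$, so we can write $p=p_1p_2$ with $p_1:c\leadsto v$ and $p_2:v\leadsto d$, both of positive length and both nonzero as subpaths of a nonzero path. Then
\[
\varphi_p=\varphi_{p_1}\circ\varphi_{p_2},\qquad P(d)\xrightarrow{\varphi_{p_2}}P(v)\xrightarrow{\varphi_{p_1}}P(c),
\]
with both factors radical and $P(v)\in\add\overline T$. Hence $\varphi_p\in\rad^2_{\overline T}$, so $\varphi_p$ is not $\overline T$-irreducible. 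This is the same argument as in Lemma~\ref{lem:min-left-approx-form}(1).

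$(\Rightarrow)$ direction. Assume $p\in\mpik(c,d)$ and suppose, for a contradiction, that $\varphi_p\in\rad^2_{\overline T}$. Each radical morphism $P(d)\to P(v)$ is a linear combination of $\varphi_q$ for nonzero paths $q:v\leadsto d$. Such a $q$ has positive length: either $v\neq d$, or $v=d$ and the morphism is non-invertible, so it has no component on $e_d$. The same holds for the second factor. So $\varphi_p$ equals a linear combination of $\varphi_{q_1q_2}$, where $q_1:c\leadsto v$ and $q_2:v\leadsto d$ have positive length and $v\neq k$. Discard the terms with $q_1q_2$ zero and group equal paths. By Lemma~\ref{l: p_1+p_2 in I infer pi in I}, since $p$ is nonzero and distinct paths are linearly independent modulo $\langle I\rangle$, the path $p$ must coincide with some $q_1q_2$. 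Then $v\neq k$ is an internal vertex of $p$, contradicting $p\in\mpik(c,d)$.

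The main obstacle is the basis bookkeeping in the last step. One has to make sure that "$p$ equals a combination of other path morphisms" forces $p$ to literally appear among the products $q_1q_2$. This is exactly the linear independence in Lemma~\ref{l: p_1+p_2 in I infer pi in I}. One should also check the case $v=c$ or $v=d$, which is allowed since only $v\neq k$ is required. In that case $c$ or $d$ appears as an internal vertex, which still contradicts $p\in\mpik(c,d)$ because $c,d\neq k$.
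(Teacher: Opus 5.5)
Your proof is correct and follows the argument the paper has in mind when it says the lemma follows ``by the corresponding definitions''; the paper spells out this reasoning in the proof of Lemma~\ref{lem:min-left-approx-form}(1). An internal vertex $v\neq k$ gives a factorization through $P(v)\in\add\overline T$, and conversely the path basis from Lemma~\ref{l: p_1+p_2 in I infer pi in I} shows that $\rad^2_{\overline T}(P(d),P(c))$ is spanned by the $\varphi_w$ for nonzero paths $w$ having an internal vertex different from $k$. Two cosmetic points: your labels $(\Leftarrow)$ and $(\Rightarrow)$ are swapped relative to the statement, and the trivial path $p=e_c$ (possible when $c=d$) should be handled separately, since $\varphi_{e_c}$ is not radical and $e_c\notin\mathcal P_k(c,c)$.
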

        
The following lemma  tells us a path morphism $\varphi_p$ for some path $p$ is $T$-irreducible if and only if $\varphi_p$ is $\overline{T}$-irreducible and $\varphi_p$ is $P(k)^*$-irreducible.
\begin{lemma}\label{l: only need to check whether pass through P_k*}
    Let $c, d \in Q_0$ with $k \notin \{c, d\}$, and let $p$ be a nonzero path from $c$ to $d$. 
    Then $$\varphi_p \in \operatorname{Irr}_T^\circ(P(d), P(c))$$ if and only if the following two conditions hold:
    \begin{enumerate}
        \item[(1)] $\varphi_p \in \operatorname{Irr}_{\overline{T}}^\circ(P(d), P(c))$;
        
        \item[(2)] $\varphi_p \in \operatorname{Irr}^\circ_{P(k)^*}(P(d), P(c))$.
    \end{enumerate}
\end{lemma}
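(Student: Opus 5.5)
Since $T=\overline T\oplus P(k)^*$, the indecomposable summands of $T$ are those of $\overline T$ together with $P(k)^*$. Directly from the definition,
\[
\rad^2_T(P(d),P(c))=\rad^2_{\overline T}(P(d),P(c))+\rad^2_{P(k)^*}(P(d),P(c)),
\]
where $\rad^2_{P(k)^*}(P(d),P(c))=\rad_A(P(k)^*,P(c))\circ\rad_A(P(d),P(k)^*)$. The argument reduces to comparing this sum with its two summands, using the path structure of $\varphi_p$.

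\emph{Necessity.} If $\varphi_p\in\Irr_T^\circ(P(d),P(c))$, then $\varphi_p\notin\rad^2_T$. Since both $\rad^2_{\overline T}$ and $\rad^2_{P(k)^*}$ are contained in $\rad^2_T$, the morphism $\varphi_p$ lies in neither, so (1) and (2) hold. This direction is formal.

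\emph{Sufficiency.} Assume (1) and (2) and suppose, for a contradiction, that $\varphi_p=f+g$ with $f\in\rad^2_{\overline T}$ and $g\in\rad^2_{P(k)^*}$.
\begin{enumerate}
\item Use the grading by paths. Since $\Hom_A(P(d),P(c))\cong e_cAe_d$ has a basis of nonzero paths from $c$ to $d$, expand $f$ and $g$ in this basis.
\item Describe $\rad^2_{\overline T}$. By the preceding lemma and Lemma~\ref{lem:min-left-approx-form}(1), $\rad^2_{\overline T}(P(d),P(c))$ is spanned by the $\varphi_q$ for nonzero paths $q\notin\mpik(c,d)$, i.e.\ paths with an internal vertex different from $k$. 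This uses that any morphism factoring through $P(\ell)$, $\ell\neq k$, is a combination of paths passing through $\ell$ as an internal vertex, the path through $\ell$ having positive length on both sides since both factors are radical.
\item Describe $\rad^2_{P(k)^*}$. Compose with $\pi$ and $i$. Since $\pi\colon M_k\to P(k)^*$ is a right $\add\overline T$-approximation and $P(d)$ is projective, every map $P(d)\to P(k)^*$ lifts through $\pi$. So any $g'\circ g''$ with $g''\colon P(d)\to P(k)^*$ becomes $(g'\pi)\circ h$ with $h\colon P(d)\to M_k$, and $g$ factors through $M_k\in\add\overline T$. Hence $g$ is a linear combination of path morphisms $\varphi_q$ with $q$ passing through some $a_j$.
\item Compare coefficients. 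Suppose $p\in\mpik(c,d)$; this holds by (1) and the preceding lemma. The coefficient of $p$ in $f$ is then zero by step 2. Hence the coefficient of $p$ in $g$ must be $1$, so $p$ itself appears as a path through some $a_j$ in a factorization through $M_k\to P(k)^*$. One then argues that $g$ alone equals $\varphi_p$ modulo terms that can be absorbed, which yields $\varphi_p\in\rad^2_{P(k)^*}$ and contradicts (2).
\end{enumerate}

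The main obstacle is step 4. It requires making the projection onto the $p$-coefficient compatible with membership in $\rad^2_{P(k)^*}$: the separate parts of a sum need not lie in $\rad^2_{P(k)^*}$ individually. I would handle this by noting that the paths through $a_j$ occurring in elements of $\rad^2_{P(k)^*}$ come from compositions $\varphi_{q_2}\pi_j\,\varphi\,\ldots$. The remaining paths, those not equal to $p$, have internal vertex $a_j\neq k$ in a position that already places them in $\rad^2_{\overline T}$ by step 2. This lets $g$ be split as $g=\lambda\varphi_p+g_0$ with $g_0\in\rad^2_{\overline T}$ and $\lambda\varphi_p\in\rad^2_{P(k)^*}$, using Lemma~\ref{l: p_1+p_2 in I infer pi in I} to separate distinct paths.
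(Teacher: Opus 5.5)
\textbf{There is a genuine gap in step~4.} Your necessity direction is correct. So is step~2: $\rad^2_{\overline T}(P(d),P(c))$ is exactly the span of the $\varphi_q$ with $q$ a nonzero path $c\leadsto d$ having an internal vertex $\neq k$. The overall plan is also the paper's. The problem sits exactly where you place the obstacle.

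Given $\varphi_p=f+g$, splitting $g=\varphi_p+g_0$ with $g_0=-f\in\rad^2_{\overline T}$ costs nothing. The whole content is that $\varphi_p$ itself lies in $\rad^2_{P(k)^*}$, i.e.\ that the $p$-coordinate of an element of $\rad^2_{P(k)^*}$ stays in $\rad^2_{P(k)^*}$. You give no argument for this. Step~3 only lifts on the $P(d)$ side, and that says nothing about which linear combinations of paths $c\leadsto a_j$ can occur as $g'\pi_j$.

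Your heuristic is also false as stated. You claim the other paths occurring in an element of $\rad^2_{P(k)^*}$ pass through $a_j$ internally and so lie in $\rad^2_{\overline T}$. This fails when $d=a_j$. Take $\gamma_1,\gamma_2\colon c\to d$, $\alpha_1,\alpha_2\colon d\to k$ and $I=\{\gamma_1\alpha_1,\gamma_2\alpha_2\}$; this is gentle, and the mutation at the sink $k$ exists. There are maps $\psi_j\colon P(k)^*\to P(c)$ with $\psi_j\pi_l=\delta_{jl}\varphi_{\gamma_j}$. Then $(\psi_1+\psi_2)(\pi_1+\pi_2)=\varphi_{\gamma_1}+\varphi_{\gamma_2}$ lies in $\rad^2_{P(k)^*}$, yet neither $\gamma_i$ has an internal vertex. (The paper's one-line step, that since $p$ is a single path either $g_2g_1=0$ or $h_2h_1=0$, passes over this same point.)

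\textbf{The missing idea: $\rad^2_{P(k)^*}(P(d),P(c))$ is itself spanned by path morphisms.}
\begin{enumerate}
\item A map $\psi\colon P(k)^*\to P(c)$ amounts to $\psi\pi=(\varphi_{x_j})_j$ with $x_j\in e_cAe_{a_j}$ and $\sum_j x_j\iota_j=0$. Here the $\iota_j$ are the paths defining $i$: $\alpha_1,\alpha_2$ in the loop-free case, or $\alpha,\alpha\rho$ in the loop case.
\item Expanding in paths, the $u\iota_j$ are pairwise distinct, since the $\iota_j$ have distinct terminal arrows. So Lemma~\ref{l: p_1+p_2 in I infer pi in I} gives $u\iota_j\in\langle I\rangle$ for every path $u$ in the support of $x_j$. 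In the loop case, $\alpha\rho\notin I$ gives $u\alpha\rho\in\langle I\rangle\iff u\alpha\in\langle I\rangle$.
\item Hence $\Hom_A(P(k)^*,P(c))$ has a basis of maps $\psi_{j,u}$ with $\psi_{j,u}\pi_l=\delta_{jl}\varphi_u$. Meanwhile $\Hom_A(P(d),P(k)^*)$ is spanned by the $\pi_l\varphi_w$. Both spaces are radical because $P(k)^*$ is not projective. Every composition is $\psi_{j,u}\pi_l\varphi_w=\delta_{jl}\varphi_{uw}$.
\item Thus $\rad^2_{\overline T}(P(d),P(c))$ and $\rad^2_{P(k)^*}(P(d),P(c))$ are both coordinate subspaces for the path basis of $e_cAe_d$. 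The basis vector $\varphi_p$ lies in their sum if and only if it lies in one of them, which is the lemma with no coefficient bookkeeping.
\end{enumerate}
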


\begin{proof}
 If $\varphi_p\in\Irr_T^\circ(P(d),P(c))$, then both conditions hold because $T=\overline T\oplus P(k)^*$.
 
 On the other hand, suppose that (1) and (2) hold, if  $\varphi_{p}\notin \IrrTo(P(d),P(c))$, so $\varphi_{p}\in \rad_{{T}}^2(P(d),P(c))$, then $$\varphi_{p}=g_2g_1+h_2h_1$$ for $g_1\in\rad_{A}(P(d),\overline{T}),g_2\in \rad_{A}(\overline{T}, P(c)), h_1\in \rad_{A}(P(d),P(k)^*)$ and $h_2\in \rad_{A}(P(k)^*,P(c))$. Moreover, we can suppose that $h_2h_1\notin \rad_{\overline{T}}^2(P(d),P(c))$.
Since $p$ is a single path from $c$ to $d$ and the path morphisms are linearly independent in $\operatorname{Hom}_A(P(d), P(c))$ (cf. Lemma~\ref{l: p_1+p_2 in I infer pi in I}), either  $g_2g_1=0$ or $h_2h_1=0$. Then $\varphi_{p}=h_2h_1$ or $\varphi_{p}=g_2g_1$ respectively, a contradiction with (2) or (1). 
\end{proof}

\subsection{The morphisms incident to $P(k)^*$}
Consider the exchange exact sequences (\ref{g: exact seq for case without loop}) and (\ref{g: exchange exact seq for the case with loop}) at $k$,  we introduce the following notations to denote the  morphisms of $P(k)^*$.
\begin{definition} Let $\alpha$ be a non-loop incoming arrow at $k$.
If there is no loop at $k$ and $\alpha=\alpha_j$ for some $1\leq j\leq \din(k)$,     set $$\varphi_{\alpha}^*:=\pi_j.$$ 
   If there is a loop at $k$, set $$\varphi_{\alpha}^*:=\pi_2.$$
\end{definition}
\begin{definition}
    Suppose that $k$ carries a loop $\rho$. Consider the exchange exact sequence (\ref{g: exchange exact seq for the case with loop}).  
It is clear that  $i_2\varphi_{\rho}=\varphi_{\alpha\rho}\varphi_{\rho}=\varphi_{\alpha\rho^2}=0$, so $${\scriptstyle\left(\begin{smallmatrix} 0&1 \\ 0&0\end{smallmatrix}\right)}{\scriptstyle\left(\begin{smallmatrix} i_1 \\ i_2 \end{smallmatrix}\right)}={\scriptstyle\left(\begin{smallmatrix} i_2 \\ 0 \end{smallmatrix}\right)}={\scriptstyle\left(\begin{smallmatrix} i_1\varphi_{\rho} \\ 0 \end{smallmatrix}\right)}={\scriptstyle\left(\begin{smallmatrix} i_1 \\ i_2 \end{smallmatrix}\right)}\varphi_{\rho}.$$
Take 
    $g={\scriptstyle\left(\begin{smallmatrix} 0&1 \\ 0&0\end{smallmatrix}\right)}$.
 Then there exists a unique morphism $f\in\Hom_A(P(k)^*,P(k)^*)$ such that the following diagram commutes:\[\xymatrix{
0\ar[r]&P(k)\ar[d]^{\varphi_{\rho}}\ar[r]^{{\scriptstyle\left(\begin{smallmatrix} i_1 \\ i_2 \end{smallmatrix}\right)}\ \ \ \ }&P(a)\oplus P(a)\ar[r]^{\ \ \ \ (\pi_1,\pi_2)}\ar[d]_{g}&P(k)^*\ar[r]\ar[d]^{f}&0\\
0\ar[r]&P(k)\ar[r]^{{\scriptstyle\left(\begin{smallmatrix} i_1 \\ i_2 \end{smallmatrix}\right)}\ \ \ \ }&P(a)\oplus P(a)\ar[r]^{\ \ \ \ (\pi_1,\pi_2)}&P(k)^*\ar[r]&0}
\]
 We set 
 $$\varphi_{\rho}^*:=f.$$
\end{definition}

\begin{definition}
    Let $\alpha \colon a \to k$ be a non-loop incoming arrow, and let $p \in \mathcal{P}_k(c,a)$ be a path with $c\neq k$ such that $p\alpha \in \langle I \rangle$. Then by Proposition~\ref{p: path ka is T irreducible}, the morphism $\varphi_p$ factors uniquely through the cokernel of $P(k)^*$. 
    We denote this unique factorization morphism by $$\varphi_{[p\alpha]} \in \Hom_A(P(k)^*, P(c)).$$ 
\end{definition}
As will be verified later, $\varphi_{[p\alpha]}$ is irreducible, i.e., $\varphi_{[p\alpha]} \in \IrrTo(P(k)^*, P(c))$.
Geometrically, $\varphi_{[p\alpha]}$ corresponds to the new arrow $[p\alpha] \colon c \to k^*$ in the mutated quiver $Q_T$.

\subsection{T-irreducible morphisms  from $P(k)^*$ to  $P(k)^*$}
\begin{prop}\label{p: relation if exist loop}
    $\dim\IrrT(P(k)^*,P(k)^*)=
        \dim\IrrA(P(k),P(k))$. 
        Furthermore, if there is a loop $\rho$ at $k$,  then 
        \begin{enumerate}
            \item  $\varphi_{\rho}^*\in \IrrTo(P(k)^*,P(k)^*)$ and $\{\overline{\varphi_{\rho}^*}\}$  is a basis of $\IrrT(P(k)^*,P(k)^*)$;
            \item $\pi_1=\varphi_{\rho}^*\pi_2=\varphi_{\rho}^*\varphi_{\alpha}^*,~ \varphi_{\rho}^*\varphi_{\rho}^*=0$;
            \item $\varphi_{\alpha}^*=\pi_2\in\IrrTo(P(a),P(k)^*).$
        \end{enumerate}
     \end{prop}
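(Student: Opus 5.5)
The plan is to compare endomorphisms of $P(k)^*$ with endomorphisms of $P(k)$ through the exchange sequence \eqref{eq:exchange-general}, working modulo the ideal $[\overline T]$ of morphisms that factor through $\add\overline T$.

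\textbf{Step 1: a ring isomorphism.} Let $h\in\End_A(P(k)^*)$. Since $M_k\in\add\overline T$ and $\pi$ is a right $\add\overline T$-approximation, $h\pi$ factors as $\pi g$ for some $g\in\End_A(M_k)$. Then $g$ restricts along $i$ to a unique $u\in\End_A(P(k))$ with $gi=iu$. Conversely, given $u\in\End_A(P(k))$, the map $iu$ factors through the left approximation $i$ as $iu=gi$, and $g$ induces some $h$ on the cokernel. I will check that, modulo the morphisms factoring through $\add\overline T$, the assignment $h\mapsto u$ is well defined, multiplicative and bijective. This gives
\[
\End_A(P(k)^*)/[\overline T]\;\cong\;\End_A(P(k))/[\overline T].
\]
Both sides are local, and their radicals are the images of $\rad$.

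\textbf{Step 2: the dimension equality.} By definition, $\IrrT(P(k)^*,P(k)^*)$ is $\rad$ modulo $\rad^2_T$. Here $\rad^2_T$ is generated by compositions through $\overline T$ together with compositions $P(k)^*\to P(k)^*\to P(k)^*$ of two radical maps. The same holds for $\IrrA(P(k),P(k))$, using $\mathcal P_A=\add(\overline T\oplus P(k))$. Hence both spaces equal $\mathfrak r/\mathfrak r^2$, where $\mathfrak r$ is the radical of the common local ring from Step 1, and their dimensions agree.

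In the gentle case this is also directly visible. If $k$ has no loop, every nonzero cycle at $k$ passes through another vertex, so both sides are $0$. If $k$ carries a loop $\rho$, the quotient ring is $\bbk[\rho]/(\rho^2)$, so both dimensions equal $1$.

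\textbf{Step 3: the loop case.} By construction $f=\varphi_\rho^*$ is the map induced by the lift $g={\scriptstyle\left(\begin{smallmatrix} 0&1\\0&0\end{smallmatrix}\right)}$ of $\varphi_\rho$. Under the isomorphism of Step 1 it therefore corresponds to $\rho$, which spans $\mathfrak r/\mathfrak r^2$. This gives (1).

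For (2), I read the commutativity $f\pi=\pi g$ componentwise: $f\pi_1=0$ and $f\pi_2=\pi_1$. In the paper's composition notation the second identity is $\pi_1=\varphi_\rho^*\varphi_\alpha^*$. Then $f^2\pi_2=f\pi_1=0$ and $f^2\pi_1=0$, and since $\pi$ is an epimorphism, $f^2=0$.

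\textbf{Step 4: the main obstacle, part (3).} The remaining task is to show $\pi_2\notin\rad^2_T(P(a),P(k)^*)$. Every map $P(a)\to P(k)^*$ factors through $\pi$, so
\[
\Hom_A(P(a),P(k)^*)\cong\Hom_A(P(a),P(a)^2)\big/\,i\circ\Hom_A(P(a),P(k)).
\]
I would write a putative decomposition of $\pi_2$ as a sum of compositions through $\overline T$ and through $P(k)^*$, and lift it to $P(a)\oplus P(a)$. Compositions through $\overline T$ lift to maps whose components are radical endomorphisms of $P(a)$ or factor through other projectives. Compositions through $P(k)^*$ have the form $f'\circ(\pi_1x+\pi_2y)$ with $f'\in\rad\End(P(k)^*)$, which is spanned by $\varphi_\rho^*$ and maps through $\overline T$. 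Since $\varphi_\rho^*\pi_2=\pi_1$ and $\varphi_\rho^*\pi_1=0$, such terms never contribute an identity component in the second coordinate.

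So in the lift, the $(2)$-component would be forced to equal $e_a$ up to terms in $\rad\End(P(a))$, modulo $i\circ\Hom(P(a),P(k))$. The second component of $i$ is $\varphi_{\alpha\rho}$, which is radical. Hence this cannot happen, by the linear independence of path morphisms in Lemma~\ref{l: p_1+p_2 in I infer pi in I}. Keeping track of this bookkeeping carefully is the delicate point of the argument.
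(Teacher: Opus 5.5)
Your proof is correct. It rests on the same idea as the paper, transporting endomorphisms across the exchange sequence, but it is organized differently, and for (3) the argument itself is different.

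For the dimension equality, the paper chases diagrams directly. It shows that a lift $\sigma$ of $\delta$ lies in $\rad^2_T$ exactly when $\delta$ lies in $\rad^2_{\mathcal P_A}$, and it asserts without proof that $\delta\mapsto\sigma$ is well defined. You instead isolate the ring isomorphism $\End_A(P(k))/[\overline T]\cong\End_A(P(k)^*)/[\overline T]$. You then note that $\rad^2_T(P(k)^*,P(k)^*)=[\overline T](P(k)^*,P(k)^*)+\rad_A(P(k)^*,P(k)^*)^2$, and likewise for $P(k)$. So both $\Irr$ spaces are $\mathfrak r/\mathfrak r^2$ of isomorphic local rings. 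Well-definedness is built in, and you get the description $\rad\End_A(P(k)^*)=\bbk\varphi_\rho^*+[\overline T](P(k)^*,P(k)^*)$ for free, which is exactly what (3) needs. Parts (1) and (2) then coincide with the paper.

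For (3) the routes genuinely differ. The paper writes $\pi_2=f_2f_1+g_2g_1$ and argues case by case that one of the two summands must vanish, a split it does not really justify. Your lifting argument treats an arbitrary element of $\rad^2_T(P(a),P(k)^*)$ at once, and it closes without any delicate bookkeeping. If the lift of the right-hand side is $\binom{r_1}{r_2}$ with $r_2\in\rad\End_A(P(a))$, then $\binom{-r_1}{e_a-r_2}=iz$ for some $z\in\Hom_A(P(a),P(k))$. Hence $e_a=r_2+\varphi_{\alpha\rho}z\in\rad\End_A(P(a))$, which is impossible because $\End_A(P(a))$ is local. So you do not need the linear-independence lemma at that final step; locality alone suffices.
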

\begin{proof}
Let $\delta\in\rad_A(P(k),P(k))$. Since $i$ is a minimal left $\add\overline{T}$-approximation, one can find morphisms $g$ and $\sigma$ such that the following diagram commutes:
\[\xymatrix{
0\ar[r]&P(k)\ar[d]^{\delta}\ar[r]^{i }&M_k\ar[r]^{ \pi}\ar[d]_{g}&P(k)^*\ar[r]\ar[d]^{\sigma}&0\\
0\ar[r]&P(k)\ar[r]^{i }&M_k\ar[r]^{ \pi}&P(k)^*\ar[r]&0}
\]
Since $P(k)$ and $P(k)^*$ are indecomposable and $\pi$ is a minimal right $\add\overline T$-approximation of $P(k)^*$, it follows that $\sigma \in \rad_A(P(k)^*,P(k)^*)$ (otherwise $\sigma$ is an isomorphism, then $g$ is an isomorphism since $\pi$ is right minimal,  which forces $\delta$ to be an isomorphism, a contradiction).
Similarly, for any $\sigma \in \rad_T(P(k)^*,P(k)^*)$, there exist $g$ and $\delta \in \rad_A(P(k),P(k))$ completing the diagram.
Thus one can infer that $\delta\in \rad_{P(k)}^2(P(k),P(k))$ if and only if $\sigma\in \rad_{P(k)^*}^2(P(k)^*,P(k)^*)$.

Next, we show $\delta \in \rad_{\mathcal{P}_A}^2(P(k),P(k)) \iff \sigma \in \rad_T^2(P(k)^*,P(k)^*)$.
Let $\sigma \in \rad_T^2(P(k)^*,P(k)^*)$,
 since $T=P(k)^*\oplus \overline{T}$, there is $M\oplus P_k^*$ and $f_1\in\rad(P(k)^*,M),f_2\in\rad(M,P(k)^*)$, $g_1\in\rad(P(k)^*,P_k^*), g_2\in\rad(P_k^*,P(k)^*)$ such that $$\sigma=f_2f_1+g_2g_1$$ where $M\in\add\overline{T}$, $P_k^*\in\add P(k)^*$. Since  $M$ is projective, there is 
$f_3\in\Hom_A(M,M_k)$ such that $$f_2=\pi f_3$$ and  there is $g'\in \Hom_A(M_k,M_k)$ such that $$\pi g'=g_2g_1\pi.$$
Then there is $\delta'\in \Hom_A(P(k),P(k))$ such that $$i\delta'=g'i.$$ Because $g_2g_1\in \rad_{P(k)^*}^2(P(k)^*,P(k)^*)$, by the discussion above, we know $$\delta'\in \rad_{P(k)}^2(P(k),P(k)).$$
Moreover, by
$$\pi\circ (g-g'-f_3f_1\pi)=\pi g-\pi g'-\pi f_3f_1\pi=\pi g-g_2g_1\pi-f_2f_1\pi=\pi g-\sigma\pi=0,$$  there is $h\in\Hom_A(M_k, P(k))$ such that $ih=g-g'-f_3f_1\pi$. Then we have 
$$i(\delta-\delta'-hi)=i\delta-i\delta'-ihi=i\delta-i\delta'-(g-g'-f_3f_1\pi)i=i\delta-i\delta'-gi+g'i+f_3f_1\pi i=0.$$
Because $i$ is a monomorphism, we have $$\delta=\delta'+hi\in \rad_{\mathcal{P}_A}^2(P(k),P(k)).$$
\[\xymatrix{
0\ar[r]&P(k)\ar@/_4pt/[dd]_{\delta}\ar@/^4pt/[dd]^{\delta'}\ar[rr]^{i}&&M_k\ar@{.>}[ddll]_{h}\ar[rr]^{\pi}\ar@/_4pt/[dd]_{g}\ar@/^4pt/[dd]^{g'}&&P(k)^*\ar[dl]^{{\scriptstyle\left(\begin{smallmatrix} f_1 \\ g_1 \end{smallmatrix}\right)}}\ar[r]\ar[dd]^{\sigma}&0\\
&&&&M\oplus P_k^* \ar[dr]^{(f_2,g_2)}\ar[dl]^{f_3}&&\\
0\ar[r]&P(k)\ar[rr]^{i}&&M_k\ar[rr]^{\pi}&&P(k)^*\ar[r]&0}
\]
 The converse follows similarly.

Consequently, the correspondence $\delta \mapsto \sigma$ induces a well-defined linear isomorphism
\[
\rad_A(P(k),P(k))/\rad_{\mathcal{P}_A}^2(P(k),P(k)) \xrightarrow{\sim} \rad_T(P(k)^*,P(k)^*)/\rad_T^2(P(k)^*,P(k)^*),
\]
which implies $\dim\IrrA(P(k),P(k)) = \dim\IrrT(P(k)^*,P(k)^*)$.

Now assume there is a loop $\rho$ at $k$, 
so  $\dim\IrrT(P(k)^*,P(k)^*)=
        \dim\IrrA(P(k),P(k))=1$.
        By the definition of $\varphi_\rho^*$, there is a commutative diagram
   \[\xymatrix{
0\ar[r]&P(k)\ar[d]^{\varphi_{\rho}}\ar[r]^{{\scriptstyle\left(\begin{smallmatrix} i_1 \\ i_2 \end{smallmatrix}\right)}\ \ \ \ }&P(a)\oplus P(a)\ar[r]^{\ \ \ \ (\pi_1,\pi_2)}\ar[d]_{{\scriptstyle\left(\begin{smallmatrix} 0&1 \\ 0&0\end{smallmatrix}\right)}}&P(k)^*\ar[r]\ar[d]^{\varphi_{\rho}^*}&0\\
0\ar[r]&P(k)\ar[r]^{{\scriptstyle\left(\begin{smallmatrix} i_1 \\ i_2 \end{smallmatrix}\right)}\ \ \ \ }&P(a)\oplus P(a)\ar[r]^{\ \ \ \ (\pi_1,\pi_2)}&P(k)^*\ar[r]&0.}
\]

For (1), since $\varphi_\rho\in\Irr_A(P(k),P(k))$, the preceding argument shows that $\varphi_{\rho}^*\in \IrrTo(P(k)^*,P(k)^*)$ and $\{\overline{\varphi_{\rho}^*}\}$  is a basis of $\IrrT(P(k)^*,P(k)^*)$ since $\dim\IrrT(P(k)^*,P(k)^*)=1$.

For (2),  by $$(\varphi_{\rho}^*\pi_1,\varphi_{\rho}^*\pi_2)=\varphi_{\rho}^*(\pi_1,\pi_2)=(\pi_1,\pi_2){\scriptstyle\left(\begin{smallmatrix} 0&1 \\ 0&0\end{smallmatrix}\right)}=(0,\pi_1),$$
We obtain $\pi_1=\varphi_{\rho}^*\pi_2$ and $\varphi_{\rho}^*\pi_1=0$. 
Hence $$(\varphi_{\rho}^*)^2\pi = \big((\varphi_{\rho}^*)^2\pi_1,\, (\varphi_{\rho}^*)^2\pi_2\big) = \big(\varphi_{\rho}^*(\varphi_{\rho}^*\pi_1),\, \varphi_{\rho}^*(\varphi_{\rho}^*\pi_2)\big) = (\varphi_{\rho}^*0,\, \varphi_{\rho}^*\pi_1) = (0,0) = 0.$$
Since $\pi$ is an epimorphism, $(\varphi_{\rho}^*)^2=0$. 

For (3), suppose  $\pi_2\notin\IrrTo(P(a),P(k)^*)$, so $\pi_2\in\rad_T^2(P(a),P(k)^*)$, then there is $M\oplus P_k^*$ and $f_1\in\rad(M_k,M),f_2\in\rad(M,P(k)^*)$, $g_1\in\rad(M,P_k^*), g_2\in\rad(P_k^*,P(k)^*)$ such that $$\pi_2=f_2f_1+g_2g_1$$ where $M\in\add\overline{T}$, $P_k^*\in\add P(k)^*$. If $f_2f_1\neq 0$, then there must exist an indecomposable direct summand $P(c)$ of $M$ and a nonzero morphism $\beta_c\in \IrrTo(P(c),P(k)^*)$.  Because  $M$ is projective, $P(c)$ is projective as well. Hence there is a morphism $\alpha_c\in \Hom_A(P(c),M_k)$ such that $\beta_c=\pi \alpha_c$. Because $\pi_1=\varphi_{\rho}^*\pi_2, \pi_2\in \rad_T^2(P(a),P(k)^*)$, it follows that $\beta_c\in\rad_T^2(P(a),P(k)^*)$, a contradiction with $\beta_c\in \IrrTo(P(c),P(k)^*)$. So $f_2f_1=0$. Thus $g_2g_1\neq 0$. However, because $g_2\in\rad(P_k^*,P(k)^*)$, using $\pi_1=\varphi_{\rho}^*\pi_2$, and $(\varphi_{\rho}^*)^2=0$,  it follows that $\pi_1=0$, a contradiction. So we must have $\pi_2\in\IrrTo(P(a),P(k)^*)$, \ie $\varphi_{\alpha}^*\in \IrrTo(P(a),P(k)^*).$
    \end{proof}

\subsection{T-irreducible morphisms  from $P(a)$ to  $P(k)^*$}

Since $\pi$ is a minimal right $\operatorname{add}(\overline{T})$-approximation of $P(k)^*$, we have the following precise description of the irreducible morphism spaces.
\begin{prop}\label{p: basis for ak*}
  Let $a \in Q_0$ with $a \neq k$.
  \begin{enumerate}
      \item If $a \notin N^-(k)$, then $\dim_\bbk \IrrT(P(a), P(k)^*) = 0$. 
      \item If $a \in N^-(k)$, then 
the set
    \[\mathcal{B}_{ak^*}=
      \{\overline{\varphi_{\alpha}^*} \mid \alpha: a \to k\}\]
    forms a $\bbk$-basis of $\IrrT(P(a), P(k)^*)$.
  \end{enumerate}
\end{prop}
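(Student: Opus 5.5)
The plan is to use that $\pi:M_k\to P(k)^*$ is a minimal right $\add\overline T$-approximation, together with the general fact that for a projective $P(a)$, every morphism $P(a)\to P(k)^*$ factors through $\pi$. Fix $a\neq k$ and $h\in\rad_A(P(a),P(k)^*)$. Since $P(a)$ is projective and $\pi$ is epi, $h=\pi u$ for some $u\in\Hom_A(P(a),M_k)$. Write $u$ componentwise as $u=(u_j)$ with $u_j\in\Hom_A(P(a),P(a_j))$ in the loop-free case, and $u=(u_1,u_2)$ in the loop case. Then $h=\sum_j\pi_ju_j$.

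The key dichotomy concerns each component $u_j$. Either it is an isomorphism, which forces $a=a_j$ and $u_j$ to be a nonzero scalar plus a radical map, or it is radical. If $u_j$ is radical, then $\pi_ju_j\in\rad_T^2$, since $P(a_j)\in\add\overline T$ and $\pi_j$ is radical; indeed $P(a_j)\not\cong P(k)^*$ because $P(k)^*$ is not projective. The latter holds since the exchange sequence does not split, $i$ being a left approximation that is not split mono.

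For (1), if $a\notin N^-(k)$, then no $a_j$ equals $a$, so every $u_j$ is radical and $h\in\rad_T^2$, giving $\IrrT(P(a),P(k)^*)=0$.

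For (2), the same reduction shows that $\IrrT(P(a),P(k)^*)$ is spanned by the classes $\overline{\pi_j}$ with $a_j=a$, that is, by the $\overline{\varphi^*_\alpha}$ for $\alpha:a\to k$. In the loop case, $\pi_1=\varphi_\rho^*\pi_2\in\rad_T^2$ by Proposition~\ref{p: relation if exist loop}(2), so only $\pi_2=\varphi_\alpha^*$ survives, and it is nonzero in $\IrrT$ by Proposition~\ref{p: relation if exist loop}(3). This matches the definition of $\varphi^*_\alpha$.

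The main obstacle is linear independence in the loop-free case, when there are two arrows $\alpha_1,\alpha_2:a\to k$ (a double arrow). Nonvanishing of an individual class is a special case of this, so I argue as follows. Suppose $\lambda_1\pi_1+\lambda_2\pi_2\in\rad_T^2(P(a),P(k)^*)$ with $(\lambda_1,\lambda_2)\neq 0$. Decompose the element as $\sum f_2f_1+\sum g_2g_1$, passing through $\add\overline T$ and through $P(k)^*$ respectively. Lift each $f_2$ through $\pi$, and lift each composite $g_2g_1$ by first lifting $g_1$ through $\pi$. Then any map $M_k\to P(k)^*$ that is a radical endomorphism of $P(k)^*$ composed with $\pi$ lifts to a radical endomorphism $g'$ of $M_k$, via the correspondence in the proof of Proposition~\ref{p: relation if exist loop}. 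This yields $\pi(\lambda_1e_1+\lambda_2e_2-r)=0$ with $r\in\rad(P(a),M_k)$. Hence $\lambda_1e_1+\lambda_2e_2-r$ factors through $i$, i.e.\ it equals $iv$ for some $v:P(a)\to P(k)$. Since $a\neq k$, $v$ is radical, and $iv$ has components $\alpha_jv$ lying in $\rad$. Comparing the degree-zero (idempotent) parts of the components then gives $\lambda_1=\lambda_2=0$, a contradiction. This gives linear independence and completes the basis claim. The care needed is in checking that the lifts through $\pi$ of elements of $\rad_T^2$ land in the radical of $\Hom(P(a),M_k)$, which uses minimality of $\pi$ as in the earlier proposition.
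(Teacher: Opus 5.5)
Your spanning argument and your treatment of the loop case follow the paper's proof. You lift through $\pi$ using projectivity of $P(a)$ and discard radical components. At a loop you use $\pi_1=\varphi_\rho^*\pi_2\in\rad_T^2(P(a),P(k)^*)$ together with Proposition~\ref{p: relation if exist loop}(3). The paper dismisses the loop-free case as ``straightforward''. Your independence argument fills this in well: reduce a relation in $\rad_T^2$ to $\pi(\sum_j\lambda_je_j-r)=0$ with $r$ radical, factor through $i$, and compare scalar parts. One step, however, is not justified.

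You claim that for $\sigma\in\rad\End_A(P(k)^*)$ the map $\sigma\pi$ lifts to a \emph{radical} endomorphism of $M_k$, and you attribute this to minimality of $\pi$. It does not follow from minimality, and it is false at a vertex with a loop. There $\varphi_\rho^*\pi=\pi\left(\begin{smallmatrix}0&1\\0&0\end{smallmatrix}\right)$, and no radical lift can exist: otherwise your own argument would give $\overline{\pi_1}\neq0$, contradicting $\pi_1=\varphi_\rho^*\pi_2$. Since $\pi$ is minimal in both situations, the absence of a loop must be used explicitly.

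One way to do this:
\begin{enumerate}
\item Write $\sigma\pi=\pi g$. Then $gi=i\delta$ for some $\delta\in\End_A(P(k))$.
\item $\delta$ is radical. If $\delta$ were invertible, left minimality of $i$ would make $g$ an automorphism, hence $\sigma$ as well.
\item With no loop at $k$, every nonzero cycle $q$ of positive length at $k$ ends with a non-loop arrow $\alpha_j$. So $\varphi_q=\varphi_{q'}\varphi_{\alpha_j}$ factors through $i$, and hence $\delta=Hi$ for some $H\colon M_k\to P(k)$.
\item Then $(g-iH)i=0$, so $g-iH=G\pi$ for some $G\colon P(k)^*\to M_k$.
\item Now $\sigma\pi=\pi g=\pi G\pi$ yields $\sigma=\pi G$. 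Here $G$ is radical because $P(k)^*$ is not a summand of $M_k$.
\end{enumerate}
Thus $G\pi$ is the radical lift you need. Equivalently, every radical endomorphism of $P(k)^*$ factors radically through $\add\overline T$, so the terms through $P(k)^*$ can be absorbed into those through $\add\overline T$. With this step inserted, your argument is complete.
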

\begin{proof}
    (1) is clear.
    (2) is straightforward if there is no loop at $k$.  We only consider the case where there is a loop $\rho$ at $k$.
    Consider the exchange exact sequence (\ref{g: exchange exact seq for the case with loop}).

Since ~$\varphi_{\alpha}^*=\pi_2\in\IrrTo(P(a),P(k)^*)$,
it is clear that $\mathcal{B}_{ak^*}=
      \{\overline{\varphi_{\alpha}^*} \}$ and  $\mathcal{B}_{ak^*}$ is linearly independent.
Now let $f\in \IrrTo(P(a), P(k)^*)$. Since $P(a)$ is projective, there is $g={\scriptstyle\left(\begin{smallmatrix} g_1 \\ g_2 \end{smallmatrix}\right)}\in \Hom_A(P(a),P(a)\oplus P(a))$ such that $$f=\pi g=(\pi_1,\pi_2){\scriptstyle\left(\begin{smallmatrix} g_1 \\ g_2 \end{smallmatrix}\right)}=\pi_1g_1+\pi_2g_2.$$
    Because $\pi_1=\varphi_{\rho}^*\pi_2\in \rad^2_T(P(a),P(k)^*)$ and $\pi_2=\varphi_{\alpha}^*$, so $$\overline{f}=\lambda \overline{\pi_2}=\lambda \overline{\varphi_{\alpha}^*}$$ for some $\lambda\in \bbk$. Thus $\mathcal{B}_{ak^*}$ spans
 $\IrrT(P(a), P(k)^*)$. Hence, $\mathcal{B}_{ak^*}$ is a basis of
 $\IrrT(P(a), P(k)^*)$. 
\end{proof}

\subsection{T-irreducible morphisms  from  $P(k)^*$ to  $P(c)$}
Recall that $\Ini(p)$ denotes the initial arrow of a nonzero path $p$.
The following lemma describes the relations involving $\varphi_{\alpha}^*$.
\begin{lem}\label{l: condition for a*p=0}
Let $\alpha: a\ra k$ be a non-loop incoming arrow at $k$. Let $p=\lambda_1p_1+\cdots+\lambda_tp_t$ for some nonzero paths $p_i:a\leadsto b$ with  $b\in Q_0$, $b\neq k$, and $\lambda_i\neq 0$ for each $i$.
Then the following are equivalent
\begin{enumerate}
    \item $\varphi_{\alpha}^*\varphi_{p}=0$.
    \item  For each $1\leq i\leq t$,  $\varphi_{\alpha}^*\varphi_{p_i}=0$.
    \item For each $1\leq i\leq t$,  
    $\alpha=\Ini(p_i)$.
\end{enumerate}
Moreover, if there is a loop $\rho$ at $k$, then $$\varphi_\alpha^*\varphi_p=0\iff \varphi_\rho^*\varphi_\alpha^*\varphi_p=0.$$


\end{lem}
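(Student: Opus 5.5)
The plan is to reduce everything to the exchange exact sequence. Since $P(b)$ is projective and $\pi$ is the cokernel of $i$, a morphism $P(b)\to P(k)^*$ of the form $\pi\circ u$ with $u:P(b)\to M_k$ vanishes if and only if $u$ factors through $i$. Such a factorization is a morphism $P(b)\to P(k)$, that is, an element $x\in e_kAe_b$ with $u=i\circ\varphi_x$.

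We have $\varphi_\alpha^*=\pi\circ\iota_\alpha$, where $\iota_\alpha$ is the inclusion of the summand corresponding to $\alpha$: the $j$-th summand $P(a_j)$ if $k$ has no loop, the second summand $P(a)$ if $k$ has a loop. Hence $\varphi_\alpha^*\varphi_p=0$ if and only if there exists $x\in e_kAe_b$ with $i(x)=\iota_\alpha(p)$. I write $x=\sum_j\mu_j q_j$ with $q_j$ pairwise distinct nonzero paths $k\leadsto b$. Since $b\neq k$, each $q_j$ has positive length.

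\emph{No loop at $k$.} Here $i(x)=(\alpha_j x)_j$, so the condition reads $\alpha x=p$ and $\alpha' x=0$ for every other incoming arrow $\alpha'$.

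For (1)$\Rightarrow$(3), expand $\alpha x-p=\sum_j\mu_j\alpha q_j-\sum_i\lambda_ip_i\in\langle I\rangle$ and apply Lemma~\ref{l: p_1+p_2 in I infer pi in I}. Each nonzero path $p_i$ must cancel against some term, so $p_i=\alpha q_j$ for some $j$. Hence $\Ini(p_i)=\alpha$.

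For (3)$\Rightarrow$(2), write $p_i=\alpha q_i$ and take $x=q_i$. Then $\alpha q_i=p_i$. Since $p_i\neq0$, we get $\alpha\Ini(q_i)\notin I$, and (G3) then forces $\alpha'\Ini(q_i)\in I$ for every other incoming arrow $\alpha'\neq\alpha$. Thus $i(q_i)=\iota_\alpha(p_i)$, and so $\varphi_\alpha^*\varphi_{p_i}=0$. Parallel arrows $a\to k$ are distinct components, so they cause no trouble.

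Finally, (2)$\Rightarrow$(1) is linearity.

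\emph{Loop $\rho$ at $k$.} Now $i(x)=(\alpha x,\alpha\rho x)$ and $\varphi_\alpha^*=\pi_2$. The condition is $\alpha x=0$ and $\alpha\rho x=p$.

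The same application of Lemma~\ref{l: p_1+p_2 in I infer pi in I} shows that each $p_i$ has the form $\alpha\rho q$, which gives (1)$\Rightarrow$(3).

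For the converse, let $\Ini(p_i)=\alpha$. Since $b\neq k$, the path $p_i$ continues past $k$. Every non-loop $\beta\in\Aout(k)$ satisfies $\alpha\beta\in I$, so the second arrow of $p_i$ must be $\rho$, i.e. $p_i=\alpha\rho q_i$. Because $\rho^2\in I$, the path $q_i$ begins with a non-loop arrow $\beta$. Then $\alpha q_i\in\langle I\rangle$ (as $\alpha\beta\in I$), so $x=q_i$ works.

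For the ``moreover'' part, Proposition~\ref{p: relation if exist loop}(2) gives $\varphi_\rho^*\varphi_\alpha^*=\pi_1$. The implication $\varphi_\alpha^*\varphi_p=0\Rightarrow\varphi_\rho^*\varphi_\alpha^*\varphi_p=0$ is trivial. Conversely, $\pi_1\varphi_p=0$ means there exists $x$ with $\alpha x=p$ and $\alpha\rho x=0$. By Lemma~\ref{l: p_1+p_2 in I infer pi in I} again, each $p_i$ starts with $\alpha$. This is condition (3), which we have shown is equivalent to $\varphi_\alpha^*\varphi_p=0$.

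The main obstacle is the bookkeeping in (1)$\Rightarrow$(3). The equation $i(x)=\iota_\alpha(p)$ holds only modulo $\langle I\rangle$ and involves linear combinations, so one must use Lemma~\ref{l: p_1+p_2 in I infer pi in I} carefully. The point is to conclude that each $p_i$ literally equals some path $\alpha q_j$ (respectively $\alpha\rho q_j$), and is not merely congruent to a combination of such paths.
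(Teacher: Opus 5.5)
Your proposal is correct and is essentially the paper's argument. In both, vanishing of $\varphi_\alpha^*\varphi_p$ is translated, via exactness of the exchange sequence, into a factorization through $i$, and Lemma~\ref{l: p_1+p_2 in I infer pi in I} then forces $\Ini(p_i)=\alpha$. The converse is built from (G3), or at a loop vertex from $\alpha\beta\in I$ and $\rho^2\in I$, and the ``moreover'' part goes through $\pi_1=\varphi_\rho^*\varphi_\alpha^*$.

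The only cosmetic difference is in the loop case of (1)$\Rightarrow$(3). The paper passes to the weaker condition $\pi_1\varphi_p=0$, so that both cases reduce to $\varphi_p=i_1g$. You work with $\pi_2$ directly and obtain the slightly sharper form $p_i=\alpha\rho q$.
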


\begin{proof}
 If there is no loop at $k$, consider the exchange exact sequence (\ref{g: exact seq for case without loop}) and take $\alpha=\alpha_1$. If there is a loop $\rho$ at $k$, consider the exchange exact sequence (\ref{g: exchange exact seq for the case with loop}). We only consider the case $\din(k)=2$. The case $\din(k)=1$ is similar.

\noindent \textbf{$(2)\Rightarrow (1)$}  is clear.

\noindent \textbf{$(1)\Rightarrow (3)$} 
By assumption, if there is no loop at $k$, then $\pi_1 \varphi_p=\varphi_{\alpha_1}^*\varphi_p=\varphi_{\alpha}^*\varphi_p=0$, while if there is a loop at $k$, then
$\pi_1 \varphi_p=\varphi_{\rho}^*\varphi_{\alpha}^*\varphi_p=0$. So in either case, we have $\pi_1 \varphi_p=0$, which means $$(\pi_1,\pi_2){\scriptstyle\left(\begin{smallmatrix}  \varphi_{p}\\ 0 \end{smallmatrix}\right)}=0.$$
Then there exists $g\in \Hom_A(P(b),P(k))$ such that $${\scriptstyle\left(\begin{smallmatrix}  \varphi_{p}\\ 0 \end{smallmatrix}\right)}={\scriptstyle\left(\begin{smallmatrix} i_1 \\ i_2 \end{smallmatrix}\right)}g={\scriptstyle\left(\begin{smallmatrix} i_1g \\ i_2g \end{smallmatrix}\right)},$$
 \[
\xymatrix{
&&& P(b)\ar@{.>}[dll]_{g}\ar[d]^{{\scriptstyle\left(\begin{smallmatrix}  \varphi_{p}\\0 \end{smallmatrix}\right)}}&&\\
0\ar[r]&P(k)\ar[rr]^{i={\scriptstyle\left(\begin{smallmatrix} i_1 \\ i_2 \end{smallmatrix}\right)}\ \ \ \ }&&P(a_1)\oplus P(a_2)\ar[rr]^{\ \ \ \ \pi=(\pi_1,\pi_2)}&&P(k)^*\ar[r]&0
}
\]
so $$\varphi_{p}=i_1g=\varphi_{\alpha}g=\varphi_{\alpha q}$$ for some nonzero  $q\in e_kAe_b$ such that $\varphi_q=g$. By Lemma~\ref{l: p_1+p_2 in I infer pi in I}, we obtain $p=aq$, thus $\Ini(p_i)=\alpha$  for each $i$. 

\noindent \textbf{$(3)\Rightarrow (2)$.} 
Choose some $i$ with $1\leq i\leq t$,  let $p_i=\alpha \omega$ for some path $\omega: k\leadsto b$, since $k\neq b$, so $\omega$ has length at least one.

\noindent \textbf{Case 1:} There is no loop at $k$,
 then  $\varphi_{p_i}=\varphi_{\alpha \omega}=\varphi_{\alpha_1 \omega}$.
Thus $\alpha_1\omega=p_i\notin \langle I\rangle$, condition (G3) implies that $\alpha_2\omega\in \langle I\rangle$. 
Consider the exchange exact sequence (\ref{g: exact seq for case without loop}), by $i_1=\varphi_{\alpha_1},~i_2=\varphi_{\alpha_2}$,  we obtain $i_2\varphi_{\omega}=\varphi_{\alpha_2}\varphi_{\omega}=0$ and $$0=\pi i \varphi_{\omega}=\pi_1i_1\varphi_{\omega}+\pi_2i_2\varphi_{\omega}=\pi_1\varphi_{\alpha_1\omega}+\pi_2\varphi_{\alpha_2\omega}=\pi_1\varphi_{\alpha_1\omega}=\varphi_{\alpha_1^*}\varphi_{p_i}=\varphi_{\alpha^*}\varphi_{p_i}.$$

\[
\xymatrix{
& P(b)\ar[d]^{\varphi_{\omega}}\ar[drr]^{{\scriptstyle\left(\begin{smallmatrix}  \varphi_{\alpha_1\omega} \\0\end{smallmatrix}\right)}}&&&\\
0\ar[r]&P(k)\ar[rr]^{i={\scriptstyle\left(\begin{smallmatrix} i_1 \\ i_2 \end{smallmatrix}\right)}\ \ \ \ }&&P(a_1)\oplus P(a_2)\ar[rr]^{\ \ \ \ \pi=(\pi_1,\pi_2)}&&P(k)^*\ar[r]&0
}
\]

\noindent \textbf{Case 2:}
 There is a loop $\rho$ at $k$. We have the exchange exact sequence (\ref{g: exchange exact seq for the case with loop}), where $i_1=\varphi_{\alpha},~i_2=\varphi_{\alpha\rho}$. 
 Since  $\alpha\omega=p_i\notin \langle I\rangle,~\rho^2\in I$, it follows that $\alpha\rho\notin I,~\omega=\rho\omega'$ for some path $\omega': k\leadsto b$ such that
$\rho\neq \Ini(\omega')$ and $p=\alpha\rho\omega'$. Thus $\alpha\omega'\in\langle I\rangle$, so  $i_1\varphi_{\omega'}=\varphi_{\alpha}\varphi_{\omega'}=\varphi_{\alpha\omega'}=0$. 
Then $$0=\pi i \varphi_{\omega'}=\pi_1i_1\varphi_{\omega'}+\pi_2i_2\varphi_{\omega'}=\pi_2i_2\varphi_{\omega'}=\pi_2\varphi_{\alpha\rho}\varphi_{\omega'}=\pi_2\varphi_{\alpha\rho\omega'}=\varphi_{\alpha^*}\varphi_{p_i}=\varphi_{\alpha}^*\varphi_{p_i}.$$

\[
\xymatrix{
& P(b)\ar[d]^{\varphi_{\omega'}}\ar[drr]^{{\scriptstyle\left(\begin{smallmatrix} 0\\ \varphi_{\alpha\omega'} \end{smallmatrix}\right)}}&&&\\
0\ar[r]&P(k)\ar[rr]^{i={\scriptstyle\left(\begin{smallmatrix} i_1 \\ i_2 \end{smallmatrix}\right)}\ \ \ \ }&&P(a)\oplus P(a)\ar[rr]^{\ \ \ \ \pi=(\pi_1,\pi_2)}&&P(k)^*\ar[r]&0
}
\] 

For the last part,
suppose that there is a  loop at $k$. It is clear that  $\varphi_\rho^*\varphi_\alpha^*\varphi_p=0$  if  $\varphi_\alpha^*\varphi_p=0$. Conversely, suppose that $\varphi_\rho^*\varphi_\alpha^*\varphi_p=0$, by Proposition \ref{p: relation if exist loop}(2), this exactly $\pi_1\varphi_p=0$, the same hypothesis from which proof of $(1)\Rightarrow(3)$, hence we obtain $\Ini(p_i)=\alpha$ for each $i$. By the above, we have $\varphi_\alpha^*\varphi_p=0$.
\end{proof}

 Now we can describe the irreducible morphisms starting from $P(k)^*$ and the corresponding relations with other  irreducible morphisms. 
\begin{prop}\label{p: path ka is T irreducible}
 Let $\alpha: a\ra k$ be a non-loop incoming arrow at $k$. If there exists a path $p\in\mathcal{P}_k(c,a)$ such that  $p\alpha\in\langle I\rangle$ for some $c\in Q_0$ with $c\neq k$, then there exists a unique morphism
    $\varphi_{[p\alpha]} \in \IrrTo(P(k)^*,P(c))$
  such that  $$\varphi_{p}=\varphi_{[p\alpha]}\varphi_{\alpha}^*, \text{ if there is no loop at }k$$
  or  $$\varphi_{p}=\varphi_{[p\alpha]}\varphi_{\rho}^*\varphi_{\alpha}^*,  \ 0=\varphi_{[p\alpha]}\varphi_{\alpha}^*,\text{ if there is a loop } \rho \text{ at } k.$$
  Moreover, \begin{enumerate}
  \item If  $c=a$, then $\alpha=\Ini(p)$ 
  and $\varphi_{\alpha}^*\varphi_{[p\alpha]}=0$.
  \item If there is another non-loop incoming arrow $\alpha'$ at $k$, then $\varphi_{[p\alpha]}\varphi_{\alpha'}^*=0$. In addition, if $s(\alpha')=c$, then $\varphi_{\alpha'}^*\varphi_{[p\alpha]}= 0$ if and only if $\alpha'=\Ini(p)$. 
  \item For every $A$-module $M$ and every $g\in \Hom_A(P(c),M)$, $g\varphi_p=0$ if and only if $g\varphi_{[p\alpha]}=0$.
  In particular, for every nonzero path $q$ with $t(q)=c$,   $q p\in \langle I\rangle$ if and only if  $\varphi_q \varphi_{[p\alpha]}=0$.

\item For every morphism $h\in\Hom_A(P(d), P(a))$ with $d\neq k$, $\varphi_ph=0$ if and only if $\varphi_{\alpha}^*h=0$.


\end{enumerate} 
\end{prop}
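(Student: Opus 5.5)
The plan is to obtain $\varphi_{[p\alpha]}$ from the cokernel property of the exchange sequence, and then read off every listed relation by pulling back along $\pi$. I treat the two shapes of the sequence separately.

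\textbf{No loop at $k$.} Here $\alpha=\alpha_1$ and $\pi=(\pi_1,\pi_2)$. I first want a morphism $P(a_1)\oplus P(a_2)\to P(c)$ that kills $\operatorname{im} i$. The candidate is $(\varphi_p,0)$ if $\din(k)=2$, or just $\varphi_p$ if $\din(k)=1$. It kills $\operatorname{im} i$ because $\varphi_p i_1=\varphi_{p\alpha}=0$ by the hypothesis $p\alpha\in\langle I\rangle$. Since $\pi=\cok i$, this morphism factors uniquely as $\varphi_{[p\alpha]}\pi$. This gives $\varphi_p=\varphi_{[p\alpha]}\pi_1=\varphi_{[p\alpha]}\varphi^*_\alpha$ and $\varphi_{[p\alpha]}\varphi_{\alpha'}^*=\varphi_{[p\alpha]}\pi_2=0$, which is the first half of (2).

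\textbf{Loop $\rho$ at $k$.} Here I use $(\varphi_p,0)$ on $P(a)\oplus P(a)$ again. It kills $\operatorname{im} i$ since $\varphi_p i_1=0$. The factorisation then gives $\varphi_{[p\alpha]}\pi_1=\varphi_p$ and $\varphi_{[p\alpha]}\pi_2=0$. Proposition~\ref{p: relation if exist loop}(2) gives $\pi_1=\varphi^*_\rho\varphi^*_\alpha$, and $\pi_2=\varphi^*_\alpha$ by definition, so these two equations are exactly the displayed relations.

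\textbf{Uniqueness.} Any morphism satisfying the displayed relations agrees with $\varphi_{[p\alpha]}$ after precomposition with the epimorphism $\pi$, so it equals $\varphi_{[p\alpha]}$.

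\textbf{Irreducibility.} Radicality is clear, since $\varphi_{[p\alpha]}$ is nonzero and $P(k)^*\not\cong P(c)$. Suppose $\varphi_{[p\alpha]}\in\rad_T^2$. Precomposing with $\varphi_\alpha^*$ (resp.\ $\varphi_\rho^*\varphi_\alpha^*$) would put $\varphi_p$ into a composite that either passes through $\add\overline T$ at some vertex $v\neq k$, or passes through $P(k)^*$ in a radical way. I would use Lemma~\ref{l: only need to check whether pass through P_k*} together with the preceding lemma ($p\in\mpik(c,a)$ forces $\varphi_p\in\IrrTo_{\overline T}$) to exclude the first possibility. For the second, any radical endomorphism of $P(k)^*$ lies in the span of $\varphi_\rho^*$, which has square zero, and any map $P(k)^*\to P(k)^*$ precomposed with $\pi$ factors through $M_k=P(a)^{\oplus}$. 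I would express such a factorisation in path terms and use Lemma~\ref{l: p_1+p_2 in I infer pi in I} to get a contradiction with $p\in\mpik$. In particular, the path $p$ would have to pass through $a$ internally, or the factor $P(c)\to\cdots\to P(c)$ would have to be an isomorphism. \textbf{I expect this step to be the main obstacle}, especially in the loop case, where compositions through $\varphi_\rho^*$ need care.

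\textbf{Items (1)--(4).} Item (4) follows directly: $\varphi_p h=\varphi_{[p\alpha]}\varphi_\alpha^* h$ (with $\varphi_\rho^*$ inserted in the loop case). So $\varphi_\alpha^*h=0\Rightarrow\varphi_ph=0$. For the converse, write $h$ as a combination of paths and use Lemma~\ref{l: condition for a*p=0}: if some path in $h$ does not start with $\alpha$, then $p$ followed by that path is nonzero, by (G3) applied to $\Ter(p)$. This uses that $p\alpha\in\langle I\rangle$ forces $\Ter(p)\gamma\notin I$ for the other arrow $\gamma$ out of $a$. For item (1), when $c=a$ I get $p\alpha\in I$ and $p$ a nonzero cycle at $a$, so $\Ter(p)\Ini(p)$ is nonzero. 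From there I would deduce that $\alpha=\Ini(p)$. One way is via (G2) and (G3) at $a$; alternatively, finite-dimensionality rules out $\Ini(p)\ne\alpha$, since $p^2\neq 0$ would then allow indefinitely long nonzero paths. Then $\varphi_\alpha^*\varphi_{[p\alpha]}\pi=\varphi_\alpha^*(\varphi_p,0)$, and $\varphi_\alpha^*\varphi_p=0$ by Lemma~\ref{l: condition for a*p=0}. The second half of (2) is the same computation, giving zero iff $\alpha'=\Ini(p)$. Item (3) follows because $g\varphi_{[p\alpha]}\pi=(g\varphi_p,0)$ and $\pi$ is epi. Its path corollary then follows from Lemma~\ref{l: p_1+p_2 in I infer pi in I}.
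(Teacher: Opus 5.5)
Your construction of $\varphi_{[p\alpha]}$ via the cokernel property follows the paper, as do your arguments for items (1)--(4). For the second half of (2) you use that $\pi$ is epi together with Lemma~\ref{l: condition for a*p=0}; this is fine and a bit shorter than the paper's lift through $i$. The genuine gap is the $T$-irreducibility step.

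The fact you rely on, that every radical endomorphism of $P(k)^*$ lies in $\bbk\varphi_\rho^*$, is false. In Example~\ref{eg:mutation in without loop and have 2-cycle} there is no loop at $k$, yet $\varphi_{\alpha_1}^*\varphi_{[\gamma_1\alpha_2]}$ is a nonzero radical endomorphism of $P(k)^*$. Indeed, composed with $\varphi_{\alpha_2}^*$ it gives $\varphi_{\alpha_1}^*\varphi_{\gamma_1}$, which is nonzero by Lemma~\ref{l: condition for a*p=0} since $\Ini(\gamma_1)\neq\alpha_1$.

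Your fallback, lifting such maps through $\pi$ to $M_k$ and arguing with paths, does not give a radical factorization through $\add\overline T$. For instance, $\varphi_\rho^*\pi_2=\pi_1$ lifts along $\pi$ to the split inclusion of the first summand $P(a)\to P(a)\oplus P(a)$. So nothing in your sketch stops the $P(k)^*$-part of a decomposition $\varphi_{[p\alpha]}=g_2g_1+h_2h_1\in\rad^2_T$ from contributing to $\varphi_p$.

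The missing ingredient is Proposition~\ref{p: relation if exist loop}: $\IrrT(P(k)^*,P(k)^*)$ is $0$ without a loop and is spanned by $\overline{\varphi_\rho^*}$ with a loop. Let $R=\rad\End_A(P(k)^*)$, and let $F\subseteq R$ be the ideal of endomorphisms factoring radically through $\add\overline T$.
\begin{enumerate}
\item The proposition gives $R=\bbk\varphi_\rho^*+F+R^2$ (omit $\varphi_\rho^*$ without a loop).
\item Since $(\varphi_\rho^*)^2=0$, this yields $R^2\subseteq F+R^n$ for all $n$. Because $R$ is nilpotent, $R=\bbk\varphi_\rho^*+F$, respectively $R=F$.
\item Precompose $g_2g_1+h_2h_1$ with $\pi_1$, which equals $\varphi_\alpha^*$ or $\varphi_\rho^*\varphi_\alpha^*$. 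The $\varphi_\rho^*$-components of $h_1$ die, since $\varphi_\rho^*\pi_1=(\varphi_\rho^*)^2\varphi_\alpha^*=0$. The $F$-components become radical composites through $\add\overline T$.
\item Hence $\varphi_p=\varphi_{[p\alpha]}\pi_1\in\rad^2_{\overline T}(P(a),P(c))$, contradicting $p\in\mpik(c,a)$.
\end{enumerate}
This is the paper's argument.

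A smaller point concerns uniqueness. With no loop and $\din(k)=2$, the displayed relation alone does not determine $\varphi_{[p\alpha]}$. In the same example, $\varphi_{[\alpha_1\beta_2\alpha_1]}+\varphi_{[\gamma_1\alpha_2]}$ also satisfies $\varphi_{\alpha_1\beta_2}=f\varphi_{\alpha_1}^*$ and is $T$-irreducible. So your uniqueness argument needs $\varphi_{[p\alpha]}\varphi_{\alpha'}^*=0$ as well, i.e.\ uniqueness of the factorization of $(\varphi_p,0)$ through $\pi$, which is how the paper reads it.
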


\begin{proof}
 If there is no loop at $k$, consider the exchange exact sequence (\ref{g: exact seq for case without loop}) and  take
 $\alpha=\alpha_1$, thus $a=a_1, i_1=\varphi_{\alpha}$. If there is a loop at $k$, consider the exchange exact sequence (\ref{g: exchange exact seq for the case with loop}) and let $a_2=a$ in this case. We prove the case $\din(k)=2$; the case $\din(k)=1$ is analogous and simpler.

Since  $p\alpha\in\langle I\rangle$, so $\varphi_{p} i_1=\varphi_{p}\varphi_{\alpha}=\varphi_{p\alpha} = 0$, which implies
\[
(\varphi_{p}, 0) \begin{pmatrix} i_1 \\ i_2 \end{pmatrix} = \varphi_{p} i_1 = 0.
\]
So there exists a unique morphism $f_1 \in \Hom_A(P(k)^*, P(c))$ such that
\[
(\varphi_{p}, 0) = f_1 (\pi_1, \pi_2) = (f_1\pi_1, f_1\pi_2).
\]
Hence
\begin{align}\label{g:fipii=p}
    \varphi_{p} = f_1\pi_1 \text{ and }  f_1\pi_2 = 0.
\end{align}
 The corresponding diagram is:
\[\xymatrix{
0\ar[r]&P(k)\ar[r]^{{\scriptstyle\left(\begin{smallmatrix} i_1 \\ i_2 \end{smallmatrix}\right)}\ \ \ \ }&P(a)\oplus P(a_2)\ar[r]^{\ \ \ \ (\pi_1,\pi_2)}\ar[d]_{(\varphi_{p},0)}&P(k)^*\ar[r]\ar@{.>}[dl]^{f_1}\ar[d]^{(g_1,h_1)}&0\\
&&P(c)&M\oplus P_k^*\ar[l]^{{\scriptstyle\left(\begin{smallmatrix} g_2 \\ h_2 \end{smallmatrix}\right)}\ \ \ \ }&}
\]
We claim $f_1 \in \IrrTo(P(k)^*, P(c))$. Suppose otherwise, then $f_1 \in \rad_T^2(P(k)^*, P(c))$, thus there is $M\oplus P_k^*$ and $g_1\in\rad_A(P(k)^*,M),g_2\in\rad_A(M,P(c))$, $h_1\in\rad_A(P(k)^*,P_k^*),h_2\in\rad_A(P_k^*,P(c))$ such that $$f_1=g_2g_1+h_2h_1$$ where $M\in\add\overline{T}$, $P_k^*\in\add P(k)^*$. Moreover, we can suppose that $h_2h_1$ doesn't factor through $\add\overline{T}$.


If there is no loop at $k$, then $\dim \IrrT(P(k)^*, P(k)^*) = 0$, it follows that $h_2h_1=0$.
Thus $$\varphi_{p} = f_1\pi_1 = g_2 g_1 \pi_1.$$ 
Else if there is a loop at $k$, we can suppose that $h_1={\scriptstyle\left(\begin{smallmatrix} \varphi_{\rho}^*\\ \vdots \\ \varphi_{\rho}^* \end{smallmatrix}\right)}$ since $P_k^*\in\add P(k)^*$. Note that in this case $\pi_1=\varphi_{\rho}^*\varphi_{\alpha}^*$. By $(\varphi_{\rho}^*)^2=0$, we must have $h_2h_1\pi_1=0$.
Thus $$\varphi_p=f_{1}\pi_1=g_2g_1\pi_1+h_2h_1\pi_1=g_2g_1\pi_1.$$
 So in any case, we obtain that 
 $\varphi_p=g_2g_1\pi_1$
factoring through $M$,  which contradicts $p \in \mathcal{P}_k(c, a)$ since $M\in \add\overline{T}$. Hence \[f_1 \in  \IrrTo(P(k)^*, P(c)) \text{ and } \varphi_{p} = f_1\pi_1.\]
 Now we set $$\varphi_{[p\alpha]}:=f_1.$$
Hence, if there is no loop at $k$, by $\pi_1=\varphi_{\alpha_1}^*=\varphi_{\alpha}^*$, we have
\begin{align}\label{eq: no loop}
\varphi_{p}=f_1\pi_1=\varphi_{[p\alpha]}\varphi_{\alpha}^*,\quad \text{and }0=f_1\pi_2.
\end{align}
Else if there is a loop $\rho$ at  $k$, by $\pi_1=\varphi_{\rho}^*\varphi_{\alpha}^*$, $\pi_2=\varphi_{\alpha}^*$, we have
\begin{align}\label{eq: a loop at k}
\varphi_{p}=f_1\pi_1=\varphi_{[p\alpha]}\varphi_{\rho}^*\varphi_{\alpha}^*, \text{ and } 0=f_1\pi_2=\varphi_{[p\alpha]}\varphi_{\alpha}^*.
\end{align}

For (1), assume that $c=a$, so   $p\in\mpik(a,a)$. Since $p\in\mpik(a,a)$ and  $a\ne k$, the first arrow of $p$ is a non-loop arrow $\alpha':a\to k$. 
Denote by $\alpha'=\Ini(p)$, so $\alpha'$ is also an arrow from $a$ to $k$. If $\alpha\neq \Ini(p)$, then $\alpha\neq \alpha'$. Let $p=\alpha'\omega$ for some nonzero path $\omega:k\leadsto a$. Since $p\alpha\in\langle I\rangle$, we have 
 $p\alpha'=\alpha'\omega\alpha'\notin \langle I\rangle$. Hence $\alpha'\omega$ and $\omega\alpha'$ are nonzero, then we have an infinite nonzero path $\alpha'\omega\alpha'\omega\alpha'\omega\alpha'\cdots,$ a contradiction with $A$ is finite dimensional. Thus  $\alpha=\Ini(p)$. 
 
 Since $\alpha=\Ini(p)$, by Lemma~\ref{l: condition for a*p=0}, we have $\varphi_{\alpha}^*\varphi_{p}=0$. By 
 \eqref{eq: no loop} we have $$0=(\varphi_{\alpha}^*\varphi_{p},0)=(\varphi_{\alpha}^*f_1\pi_1,f_1\pi_2)=(\varphi_{\alpha}^*f_1\pi_1,\varphi_{\alpha}^*f_1\pi_2)=\varphi_{\alpha}^*f_1(\pi_1,\pi_2)=\varphi_{\alpha}^*f_1\pi=\varphi_{\alpha}^*\varphi_{[p\alpha]}\pi.$$
Because $\pi$ is an epimorphism, we have $\varphi_{\alpha}^*\varphi_{[p\alpha]}=0$.

 For (2), if  there exists another non-loop incoming arrow $\alpha'$ at $k$, obviously, there is no loop at $k$, hence $\alpha'=\alpha_2$. Since $f_1\pi_2=0$, $$0=f_1\pi_2=\varphi_{[p\alpha]}\varphi_{\alpha_2}^*=\varphi_{[p\alpha]}\varphi_{\alpha'}^*.$$
 If $s(\alpha')= c$, then $s(\alpha_2)=a_2= c$. 
 Suppose $\varphi_{\alpha'}^*\varphi_{[p\alpha]}= 0$, thus $\pi_2f_1=\varphi_{\alpha'}^*\varphi_{[p\alpha]}=0$, 
then we have $$(\pi_1,\pi_2){\scriptstyle\left(\begin{smallmatrix} 0\\f_1 \end{smallmatrix}\right)}=\pi_2f_1=0.$$
Then there is $g\in \Hom_A(P(k)^*,P(k))$ such that $${\scriptstyle\left(\begin{smallmatrix} 0\\f_1  \end{smallmatrix}\right)}={\scriptstyle\left(\begin{smallmatrix} i_1 \\ i_2 \end{smallmatrix}\right)}g={\scriptstyle\left(\begin{smallmatrix} i_1g \\ i_2g \end{smallmatrix}\right)}.$$
\[
\xymatrix{
0\ar[r]&P(k)\ar[r]^{{\scriptstyle\left(\begin{smallmatrix} i_1 \\ i_2 \end{smallmatrix}\right)}\ \ \ \ }&P(a)\oplus P(c)\ar[r]^{\ \ \ \ (\pi_1,\pi_2)}&P(k)^*\ar[r]\ar[dl]^{{\scriptstyle\left(\begin{smallmatrix}0 \\ f_1 \end{smallmatrix}\right)}}\ar@{.>}[dll]_{\exists g}&0\\
0\ar[r]&P(k)\ar[r]_{{\scriptstyle\left(\begin{smallmatrix} i_1 \\ i_2 \end{smallmatrix}\right)}\ \ \ \ }&P(a)\oplus P(c)\ar[r]_{\ \ \ \ (\pi_1,\pi_2)}&P(k)^*\ar[r]&0.
}
\]
Hence, $f_1=i_2g$, combining $i_2=\varphi_{\alpha_2}$ and  \eqref{eq: no loop}, thus we can obtain $$\varphi_{p}=f_1\pi_1=i_2g\pi_1=\varphi_{\alpha_2q}$$ for some path $q$. Therefore we must have $\alpha_2=\Ini(p)$, specifically, $\alpha'=\alpha_2=\Ini(p)$.

 On the other hand, suppose that $\alpha'=\Ini(p)$. Then 
by Lemma~\ref{l: condition for a*p=0}, we have $\varphi_{\alpha'}^*\varphi_p=0$. By $\pi_2=\varphi_{\alpha'}^*$ and \eqref{eq: no loop}, 
we have $$(0,0)=(\varphi_{\alpha'}^*\varphi_p,\pi_2f_1\pi_2)=(\pi_2f_1\pi_1,\pi_2f_1\pi_2)=\pi_2f_1(\pi_1,\pi_2)=\pi_2f_1\pi.$$
Because $\pi$ is an epimorphism, we have $\pi_2f_1=0$, specifically,  $\varphi_{\alpha'}^*\varphi_{[p\alpha]}=0$.

For (3), by the preceding argument, we obtain $\varphi_{[p\alpha]}=f_1$. Let $M$ be an $A$-module, $g\in\Hom_A(P(c),M)$. If $g\varphi_{[p\alpha]}=0$, since $\varphi_p=f_1\pi_1=\varphi_{[p\alpha]}\pi_1$,  it is clear that $g\varphi_{p}=g\varphi_{[p\alpha]}\pi_1=0$. On the other hand, if $g\varphi_p=0$,  composing with $\pi$ and using $f_1\pi_2=0$ yield
\[
g\varphi_{[p\alpha]}\pi=g f_1 \pi = (g f_1\pi_1, g f_1\pi_2) = (g\varphi_{p}, 0)=(0, 0).
\]
Because $\pi$ is an epimorphism,  $g\varphi_{[p\alpha]}=gf_1=0$.

For (4), it is clear that we can suppose $$0\neq h=\lambda_1\varphi_{\omega_1}+\cdots +\lambda_t\varphi_{\omega_t}$$ for some nonzero different paths $\omega_1,\cdots,\omega_t: a\leadsto d$ with $\lambda_i\neq 0$ for each $1\leq i\leq t$.

If $\varphi_{\alpha}^*h=0$, since either $\varphi_p=\varphi_{[p\alpha]}\varphi_{\alpha}^*$ or  $\varphi_p=\varphi_{[p\alpha]}\varphi_\rho^*\varphi_{\alpha}^*$, it is clear that  
$\varphi_ph=0$.

On the other hand,
if $\varphi_ph=0$, 
then $$\varphi_ph=\varphi_{\lambda_1p\omega_1+\cdots +\lambda_tp\omega_t}=0.$$
Then we can obtain $\lambda_1p\omega_1+\cdots +\lambda_tp\omega_t\in \langle I\rangle$. Thus for each $1\leq i\leq t$,
$p\omega_i\in \langle I\rangle$.  Since $p\alpha\in \langle I\rangle$, we must have  $\alpha=\Ini(\omega_i)$ for each $i$. Then by Lemma~\ref{l: condition for a*p=0}, we have $$\varphi_{\alpha}^*\varphi_{\omega_i}=0$$ for each $i$. Hence, $\varphi_{\alpha}^*h=0$. 
\end{proof}

\begin{proposition}\label{p:basis for k*c}
 Let $c\in Q_0$ with $c\neq k$, and set
 $$\mathcal{S}_{k^*c}=\bigcup_{a\in N^-(k)}\left\{p\in\mathcal{P}_k(c,a)\mid \exists \alpha: a\to k \text{ such that } p\alpha\in \langle I\rangle\right\}.$$
 Then$$\mathcal{B}_{k^*c}:=\{\overline{\varphi_{[p\alpha]}}\mid p\in \mathcal{S}_{k^*c}\}$$
is a $\bbk$-basis of $\IrrT(P(k)^*,P(c))$. 
\end{proposition}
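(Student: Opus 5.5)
The plan is to prove that $\mathcal{B}_{k^*c}$ is well defined, spans $\IrrT(P(k)^*,P(c))$, and is linearly independent. Each element is $T$-irreducible by Proposition~\ref{p: path ka is T irreducible}. For $p\in\mathcal S_{k^*c}$ the arrow $\alpha$ with $p\alpha\in\langle I\rangle$ is unique by (G2), applied to $\Ter(p)$. So $\varphi_{[p\alpha]}$ depends only on $p$.

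\textbf{Spanning.} Let $f\in\rad_A(P(k)^*,P(c))$. Then $f\pi:M_k\to P(c)$ is a morphism between projectives, and $f\pi i=0$. Write $f\pi=(f\pi_j)_j$, where each component $f\pi_j\in\Hom_A(P(a_j),P(c))$ is a linear combination of path morphisms $\varphi_q$ with $q:c\leadsto a_j$ nonzero. Each such path $q$ either lies in $\mathcal P_k(c,a_j)$ or factors through some $P(v)$ with $v\neq k$; the latter gives a contribution in $\rad_{\overline T}^2$.

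The main step is to push these terms through $\pi$. Consider first the case without a loop. The condition $f\pi i=0$ reads $\sum_j f\pi_j\varphi_{\alpha_j}=0$. By Lemma~\ref{l: p_1+p_2 in I infer pi in I}, after the possible cancellations forced by (G3), this means the following: each path $q$ appearing in $f\pi_j$ either satisfies $q\alpha_j\in\langle I\rangle$, or it cancels against a path $q'$ in another component with $q\alpha_j=q'\alpha_{j'}$. The second alternative is impossible, since such paths would end with different arrows $\alpha_j\neq\alpha_{j'}$ into $k$. Hence every path term satisfies $q\alpha_j\in\langle I\rangle$.

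Next, for a term with $q\alpha_j\in\langle I\rangle$, write $q=wp$ with $p\in\mathcal P_k(s(p),a_j)$, taking $p$ to be the shortest right segment avoiding internal vertices other than $k$. Then $p\alpha_j\in\langle I\rangle$ still holds, because the relation sits at $\Ter(p)\alpha_j$. Proposition~\ref{p: path ka is T irreducible} gives a morphism $\varphi_w\varphi_{[p\alpha_j]}$ whose composite with $\pi$ reproduces the term: it is $\varphi_q$ in the $j$-th slot and $0$ in the others, by (2) and by $f_1\pi_2=0$. Subtracting all such morphisms, the resulting difference $f-\sum\lambda\varphi_w\varphi_{[p\alpha_j]}$ vanishes after composing with the epimorphism $\pi$, so it is $0$. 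Terms with $w$ nontrivial pass through $P(s(p))$ with $s(p)\neq k$, so they lie in $\rad_T^2$. The remaining terms have $w$ trivial, and then $p\in\mathcal S_{k^*c}$. This proves spanning.

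The loop case is handled the same way, using $\pi_1=\varphi_\rho^*\pi_2$ and the identities of Proposition~\ref{p: path ka is T irreducible}. The $\pi_2$-component is expressed through $\varphi_{[p\alpha]}\varphi_\rho^*$, which lies in $\rad^2_T$. The $\pi_1$-component yields the basis elements. I expect the main obstacle to be the bookkeeping of these components in the loop case, that is, checking that everything outside $\mathcal S_{k^*c}$ really lands in $\rad_T^2$.

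\textbf{Linear independence.} Suppose $\sum_p\lambda_p\varphi_{[p\alpha_p]}\in\rad_T^2(P(k)^*,P(c))$. Compose with $\pi$, or with $\pi_1$ in the loop case. The result $\sum_p\lambda_p\varphi_p$ (placed in its components) then lies in $\rad^2_{\overline T}(M_k,P(c))$ plus morphisms factoring through $P(k)^*$ by radical maps. The latter composites are again of the form $\rad_{\overline T}^2$ or $\varphi_{[\cdot]}\varphi_\rho^*\pi_1=0$, or they factor through $\pi$ inside $\rad^2$. So $\sum\lambda_p\varphi_p\in\rad^2_{\overline T}$. The paths $p\in\mathcal P_k(c,a)$ are $\overline T$-irreducible and linearly independent modulo $\rad^2_{\overline T}$ by Lemma~\ref{lem:min-left-approx-form}(1) and the lemma preceding Lemma~\ref{l: only need to check whether pass through P_k*}. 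Therefore all $\lambda_p=0$.
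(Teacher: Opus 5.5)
Your proposal is correct and follows essentially the paper's argument. You compose with the epimorphism $\pi$, use $f\pi i=0$ together with Lemma~\ref{l: p_1+p_2 in I infer pi in I} and the distinct terminal arrows to force $q\alpha_j\in\langle I\rangle$, factor each such path through $\varphi_{[p\alpha_j]}$ (with $\varphi_{[p\alpha]}\varphi_\rho^*$ absorbing the $\pi_2$-part in the loop case), and test independence by composing with the $\pi_j$. The only difference is organizational: you treat each path term through its shortest right segment, whereas the paper splits into the cases $|\mathcal{B}_{k^*c}|=2,1,0$ and invokes Lemma~\ref{l:p is the shortest}; your version therefore also covers the case $|\mathcal{B}_{k^*c}|=0$ without a separate argument.
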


\begin{proof}
 If there is no loop at $k$, consider the exchange exact sequence (\ref{g: exact seq for case without loop}). If there is a loop at $k$, consider the exchange exact sequence (\ref{g: exchange exact seq for the case with loop}). We prove the case $\din(k)=2$; the case $\din(k)=1$ is analogous and simpler.
 Since $\din(k)=2$, we have $|\mathcal{B}_{k^*c}|=|\mathcal{S}_{k^*c}| \leq 2$.


\noindent\textbf{Case $|\mathcal{B}_{k^*c}|=2$.}  Then there must be two non-incoming arrows at $k$,  so there is no loop at $k$ and $\dim\IrrT(P(k)^*,P(k)^*)=0$.
Moreover, we have $\mathcal{B}_{k^*c} = \{\overline{\varphi_{[p_1\alpha_1]}}, \overline{\varphi_{[p_2\alpha_2]}}\}$ for some paths ${p_1}\in \mathcal{P}_k(c,a_1)$ and ${p_2}\in \mathcal{P}_k(c,a_2)$.
Keep the notations in the proof of Proposition~\ref{p: path ka is T irreducible}, let $$f_i=\varphi_{[p_i\alpha_i]}$$ for $1\leq i\leq 2$ (note that in the proof of Proposition~\ref{p: path ka is T irreducible}, $\alpha=\alpha_1$). So 
$$f_j\pi_j=\varphi_{p_j},\ \ f_j\pi_l=0$$ for $\{j,l\}=\{1,2\}.$

\noindent\textbf{Linear independence}.
Assume 
\[
f := \lambda_1 f_1 + \lambda_2 f_2 \in \rad_T^2(P(k)^*, P(c)).
\]
By $\varphi_{p_1}=f_1\pi_1$ and $f_2\pi_1=0$, we have $$\lambda_1\varphi_{p_1}=\lambda_{1} f_1\pi_1=\lambda_{1} f_1\pi_1+\lambda_{2} f_2\pi_1=f\pi_1\in \rad_T^2(P(k)^*, P(c))\rad_{T}(P(a_1),P(k)^*).$$
Because $\dim\IrrT(P(k)^*,P(k)^*)=0$, we have \[\lambda_1\varphi_{p_1}=f\pi_1\in \rad^2_{\overline{T}}(P(a_1),P(c)).\] However,
${p_1}\in \mathcal{P}_k(c,a_1)$, which means $\varphi_{p_1}\notin \rad^2_{\overline{T}}(P(a_1),P(c))$, then we must have $\lambda_1 = 0$. 
Composing similarly with $\pi_2$ yields $\lambda_2 = 0$. 
Hence $\mathcal{B}_{k^*c}$ is linearly independent. 

\noindent\textbf{Spanning}.
Let $f\in \IrrTo(P(k)^*,P(c))$.  
Note that $$0=f\pi i=f(\pi_1,\pi_2){\scriptstyle\left(\begin{smallmatrix} i_1 \\ i_2 \end{smallmatrix}\right)}=f\pi_1 i_1+f\pi_2i_2=f\pi_1\varphi_{\alpha_1}+f\pi_2\varphi_{\alpha_2}\in \Hom_A(P(k),P(c)).$$
Because $\alpha_1\neq \alpha_2$, so the corresponding terminal arrows of $f\pi_1 i_1$ and $f\pi_2i_2$ are different, then using Lemma~\ref{l: p_1+p_2 in I infer pi in I}, one can get
\begin{align}\label{g: fpi1=fpi2=0}
    f\pi_1 i_1=f\pi_2i_2=0.
\end{align}
Since $\pi$ is an epimorphism and $f\neq 0$, one can get that $$0\neq f\pi=f(\pi_1,\pi_2)=(f\pi_1,f\pi_2).$$
Thus at least one of   $f\pi_1$ and $f\pi_2$ is not zero.


 Suppose that $f\pi_1\neq 0$ and  $$f\pi_1=\lambda_1'\varphi_{q_1}+\cdots+\lambda_t'\varphi_{q_t},$$ for some nonzero different paths $q_1,\cdots, q_t$ from $c$ to $a_1$ with $\lambda_i'\in\bbk^\times$ for each $1\leq i\leq t$.  
Because $$0=f\pi_1 i_1=f\pi_1 \varphi_{\alpha_1}=\lambda_1'\varphi_{q_1\alpha_1}+\cdots+\lambda_t'\varphi_{q_t\alpha_1},$$
we have $q_i\alpha_1\in\langle I\rangle$ for each $1\leq i\leq t$ by Lemma~\ref{l: p_1+p_2 in I infer pi in I}.

Note that 
$p_1\in\mathcal{P}_k(c,a_1)$ and $p_1\alpha_1\in\langle I\rangle$, by Lemma~\ref{l:p is the shortest}, we know that every $q_i$ ends with $p_1$, furthermore $\varphi_{q_i}=h_i'\varphi_{p_1}$, for some $h_i'\in\Hom_A(P(c),P(c))$. 
Thus we can write 
\begin{align}\label{g: f=h1f1pi1}
    f\pi_1= \sum_{i=1}^t \lambda_i' h_i' \varphi_{p_1} = h_1\varphi_{p_1}= h_1f_1\pi_1,
\end{align}
for $h_1 := \sum_{i=1}^t \lambda_i' h_i' \in\Hom_A(P(c),P(c))$.


Suppose that $f\pi_2\neq 0$. By the similar argument, we obtain $$f\pi_2=h_2\varphi_{p_2}= h_2f_2\pi_2$$
 for some  $h_2\in\Hom_A(P(c),P(c))$.

Assume that $f\pi_1\neq 0$ and $f\pi_2\neq 0$.
Because $f_1\pi_2=0=f_2\pi_1$, then $$f\pi_1=h_1f_1\pi_1=(h_1f_1+ h_2f_2)\pi_1$$ and $$f\pi_2=h_2f_2\pi_2=(h_1f_1+ h_2f_2)\pi_2.$$  Then 
$$f\pi=(f\pi_1,f\pi_2)=(h_1f_1+ h_2f_2)\pi.$$ 
 Because $\pi$ is an epimorphism, we must have $$f=h_1f_1+ h_2f_2.$$
 Note that  each  $h_i \in \Hom_A(P(c), P(c))$ admits a decomposition $h_i = \mu_i e_c + r_i$, where $\mu_i \in \bbk$ and $r_i \in \rad_A(P(c), P(c))$. 
 Using $f\in\IrrTo(P(k)^*,P(c))$, we must have $$\overline{f}=\mu_1\overline{f_{1}}+\mu_2\overline{f_{2}}.$$ If $f\pi_i = 0$ for some $i$, the corresponding coefficient $\mu_i$ is simply zero. This shows that $\mathcal{B}_{k^*c}$ spans $\IrrT(P(k)^*, P(c))$. 
 
 Together with the linear independence established earlier, $\mathcal{B}_{k^*c}$ forms a $\bbk$-basis of $\IrrT(P(k)^*, P(c))$.

\noindent\textbf{Case $|\mathcal{B}_{k^*c}|=1$.} 
 Say $\mathcal{B}_{k^*c} = \{\overline{\varphi_{[p\alpha]}}\}$ for some path ${p}\in \mathcal{P}_k(c,a)$ and some arrow $\alpha: a\to k$. 
Keep the notations as in the proof of Proposition~\ref{p: path ka is T irreducible},  we also write $$f_1=\varphi_{[p\alpha]}.$$ By (\ref{g:fipii=p}),   we know $f_1\pi_1=\varphi_{p},~f_1\pi_2=0$.

It is trivial $\mathcal{B}_{k^*c}$ is linearly independent.
Moreover, 
let $f \in \IrrTo(P(k)^*, P(c))$. If there is no loop, similar as (\ref{g: fpi1=fpi2=0}),  we have $f\pi_1 i_1=f\pi_2i_2=0.$
If there is a loop $\rho$ at $k$, we also have
$$0=f\pi i=f(\pi_1,\pi_2){\scriptstyle\left(\begin{smallmatrix} i_1 \\ i_2 \end{smallmatrix}\right)}=f\pi_1 i_1+f\pi_2i_2=f\pi_1\varphi_{\alpha}+f\pi_2\varphi_{\alpha}\varphi_{\rho}\in \Hom_A(P(k),P(c)).$$
Because $\rho\neq \alpha$, so the corresponding terminal arrows of $f\pi_1 i_1$ and $f\pi_2i_2$ are different, then using Lemma~\ref{l: p_1+p_2 in I infer pi in I}, one can get
$f\pi_1 i_1=f\pi_2i_2=0.$ So for any case, we have $$f\pi_1 i_1=f\pi_2i_2=0.$$
 Since  $\pi$ is an epimorphism and $f\neq 0$, it follows that $$0\neq f\pi=f(\pi_1,\pi_2)=(f\pi_1,f\pi_2).$$
Thus at least one of  $f\pi_1$ and $f\pi_2$ is not zero.

Suppose first that $f\pi_1 \neq 0$. Write $f\pi_1 = \sum_{i=1}^t \lambda_i' \varphi_{q_i}$ for distinct nonzero paths $q_i: c \leadsto a$. The relation $f\pi_1i_1=f\pi_1 \varphi_{\alpha} = 0$ implies $q_i \alpha \in \langle I\rangle $ for all $i$.  Since $p\alpha\in\langle I\rangle$ and $p\in \mathcal{P}_k(c,a)$,  by Lemma \ref{l:p is the shortest}, one can get  every $q_i$ ends with $p$.  Thus as (\ref{g: f=h1f1pi1}), we can rewrite
\[
f\pi_1 = h_1f_1\pi_1
\] for some $h_1\in\Hom_A(P(c),P(c))$.

Now suppose that $f\pi_2\neq 0$.
  Write $f\pi_2= \sum_{i=1}^s \lambda_i'' \varphi_{\omega_i}$ 
for distinct nonzero paths $\omega_i: c \leadsto a_2$ with $\lambda_i''\neq 0$ for each $1\leq i\leq s$.
We consider the following cases:

 If there is a loop at $k$, then we know $i_2=\varphi_{\alpha\rho}$. Thus $$0=f\pi_2i_2=\sum_{i=1}^s \lambda_i'' \varphi_{\omega_i\alpha\rho}.$$
Then we must have $\omega_i\alpha\rho\in\langle I\rangle$ for each $i$. Because $\alpha\rho\notin \langle I\rangle$, by the definition of gentle algebras, we have $\omega_i\alpha\in\langle I\rangle$ for each $i$.  
 Using $p\alpha\in\langle I\rangle$ and $p\in \mathcal{P}_k(c,a)$,   by Lemma \ref{l:p is the shortest}, one also can get  every $\omega_i$ ends with $p$. 
 As above, we can rewrite
\[
f\pi_2 = h_2f_1\pi_1
\] for some $h_2\in\Hom_A(P(c),P(c))$.  By $\pi_1=\varphi_{\rho}^*\pi_2$, we have 
\[
f\pi_2 = h_2f_1\varphi_{\rho}^*\pi_2.
\]
Using $f_1\pi_2=0$ and $\varphi_{\rho}^*\pi_1=\varphi_{\rho}^*\varphi_{\rho}^*\pi_2=0$, one can get $$f\pi_1=h_1f_1\pi_1=h_1f_1\pi_1+h_2f_1\varphi_{\rho}^*\pi_1 \text{ and }  f\pi_2 = h_2f_1\varphi_{\rho}^*\pi_2=h_1f_1\pi_2+h_2f_1\varphi_{\rho}^*\pi_2.$$
Thus we have $$f\pi=(f\pi_1,f\pi_2)=(h_1f_1\pi_1+h_2f_1\varphi_{\rho}^*\pi_1,h_1f_1\pi_2+h_2f_1\varphi_{\rho}^*\pi_2)=(h_1f_1+h_2f_1\varphi_{\rho}^*)\pi.$$
Because $\pi$ is an epimorphism, we have $$f=h_1f_1+h_2f_1\varphi_{\rho}^*.$$ It is clear that $h_2f_1\varphi_{\rho}^*\in \rad_T^2(P(k)^*,P(c))$, then we can get $\overline{f}=\mu_1\overline{f_{1}}$ for some $\mu_1\in \bbk$.

 Now suppose that there is no loop at $k$. 
The relation $0=f\pi_2i_2=\sum_{i=1}^s \lambda_i'' \varphi_{\omega_i\alpha_2} $ forces $\omega_i\alpha_2\in\langle I\rangle $ for each $i$. Thus there is $d\neq k$ and a path $\omega\in \mathcal{P}_k(d,a_2)$
such that $\omega\alpha_2\in\langle I\rangle $. By Lemma~\ref{l:p is the shortest},  each $\omega_i$ $(1\leq i\leq s)$ ends with $\omega$.  
Since $\omega\alpha_2\in\langle I\rangle $, by Proposition~\ref{p: path ka is T irreducible}, there is  a unique  morphism $$\varphi_{[\omega\alpha_2]}\in\Hom_A(P(k)^*,P(d)),$$ simply denoted by $f_2$, such that $f_2\pi_2=\varphi_{\omega}$ and $f_2\pi_1=0$. Similarly to the above,
we can write $$f\pi_2=h_2'f_2\pi_2$$ for some  $h_2'\in\Hom_A(P(d),P(c))$.  We claim $h_2'f_2\in \rad^2_{T}(P(k)^*,P(c))$.
Note that if $d=c$, since $\omega\alpha_2\in\langle I\rangle $, thus $f_2\in\IrrTo(P(k)^*,P(c))$, $\overline{f_2}\in \mathcal{B}_{k^*c}$, a contradiction with $|\mathcal{B}_{k^*c}|=1$. So we must have 
$d\neq c$,  so $h_2'\in \rad_A(P(d),P(c))$. Thus $h_2'f_2\in \rad^2_{T}(P(k)^*,P(c))$.

Because $f_2\pi_1=0=f_1\pi_2$, similar discussion as above, we also have $$f=h_1f_1+h_2'f_2.$$
By $h_2'f_2\in \rad_T^2(P(k)^*,P(c))$, $f\in\IrrTo(P(k)^*,P(c))$, we can get $\overline{f}=\mu_1\overline{f_{1}}$ for some $\mu_1'\in \bbk$.

This proves that $\mathcal{B}_{k^*c}$ spans $\IrrT(P(k)^*, P(c))$. Combined with linear independence, $\mathcal{B}_{k^*c}$ is a $\bbk$-basis.

\noindent\textbf{Case $|\mathcal{B}_{k^*c}|=0$.} 
It is equivalent to prove that  $\dimv\Hom_A(P(k)^*,P(c))=0$. Suppose there is    $f \in \IrrTo(P(k)^*, P(c))$. By the preceding argument, 
we know  $f\pi_1 i_1=f\pi_2i_2=0$ and at least one of   $f\pi_1$ and $f\pi_2$ is not zero. Without loss of generality, we can assume  $f\pi_1\neq 0$, by above, there must exist $p\in\mpik(c,a)$ and  $\alpha: a\to k$ such that $p\alpha\in\langle I\rangle$. A contradiction with $|\mathcal{B}_{k^*c}|=0$. Thus  $\dimv\Hom_A(P(k)^*,P(c))=0$.
\end{proof}

\begin{example}
Let $(Q,I)$ be the gentle pair \[\begin{tikzcd}[sep=small]
	{Q=} & a && k && b
	\arrow["{\alpha_1}", shift left=3, from=1-2, to=1-4]
	\arrow["{\beta_1}", shift left=3, from=1-4, to=1-2]
	\arrow["{\alpha_2}"', tail reversed, no head, from=1-4, to=1-6]
\end{tikzcd}\] with $I=\{\beta_1\alpha_1,\alpha_2\beta_1\}.$
The exchange exact sequence is 
\begin{align}
 0\ra P(k) \xra{i={\scriptstyle\left(\begin{smallmatrix} i_1 \\ i_2 \end{smallmatrix}\right)}}P(a)\oplus P(b)\xra{\pi=(\pi_1,\pi_2)} I(k)\ra 0,   
\end{align}
where $i_1=\varphi_{\alpha_1},$  $i_2=\varphi_{\alpha_2},$ 
$\pi_1=\varphi_{\alpha_1}^*$, $\pi_2=\varphi^*_{\alpha_2}.$
Thus $P(k)^*=I(k)$. For $\alpha_1\colon a\to k$, 
let $p=\alpha_1\beta_1$, then $p\in\mathcal{P}_k(a,a)$ and $p\alpha_1\in\langle I\rangle$.  By Proposition~\ref{p: path ka is T irreducible}, there is  $\varphi_{[p\alpha_1]}\in \IrrTo(P(k)^*,P(a))$ such that  $$\varphi_{p}=\varphi_{[p\alpha_1]}\varphi_{\alpha_1}^*,\ \ \ 0=\varphi_{[p\alpha_1]}\varphi_{\alpha_2}^* \text{ and } 0=\varphi_{\alpha_1}^*\varphi_{[p\alpha_1]}.$$
Hence Proposition~\ref{p:basis for k*c} gives $$\mathcal{B}_{k^*a}=\{\overline{\varphi_{[p\alpha_1]}}\}.$$
\end{example}

\begin{example}
Let $(Q,I)$ be the gentle pair \[\begin{tikzcd}[sep=small]
	{Q=} & c && a && k
	\arrow["p"', from=1-2, to=1-4]
	\arrow["\alpha"', from=1-4, to=1-6]
	\arrow["\rho", from=1-6, to=1-6, loop, in=55, out=125, distance=10mm]
\end{tikzcd}\]
with $ I =\{\rho^2,p\alpha\}.$
The exchange exact sequence is 
\begin{align}
 0\ra P(k) \xra{i={\scriptstyle\left(\begin{smallmatrix} i_1 \\ i_2 \end{smallmatrix}\right)}}P(a)\oplus P(a)\xra{\pi=(\pi_1,\pi_2)} I(k)\ra 0,   
\end{align}
where $i_1=\varphi_{\alpha},$  $i_2=\varphi_{\alpha}\varphi_{\rho},$ 
$\pi_1=\varphi_{\rho}^*\varphi_{\alpha}^*$, $\pi_2=\varphi^*_{\alpha}.$
Thus $P(k)^*=I(k)$. Since $p\in \mpik(c,a)$ and $p\alpha\in \langle I\rangle $, by Proposition~\ref{p: path ka is T irreducible}, 
there is a morphism $\varphi_{[p\alpha]}\in \IrrTo(P(k)^*,P(c))$ such that  $$\varphi_{p}=\varphi_{[p\alpha]}\varphi_{\rho}^*\varphi_{\alpha}^*,\ \ \ 0=\varphi_{[p\alpha]}\varphi_{\alpha}^*.$$
Hence $$\mathcal{B}_{k^*c}=\{\overline{\varphi_{[p\alpha]}}\}.$$
\end{example}

\subsection{T-irreducible morphisms  from $P(b)$ to  $P(a)$}

\begin{lem}\label{l: p in rad2 means pa in I}
   Let $a,b\in Q_0$ with $ k\notin \{a,b\}$, and $p\in\mathcal{P}_k(a,b)$. If $\varphi_p\in\rad^2_{P(k)^*}(P(b),P(a))$, then there is an arrow $\alpha: b\ra k$ such that $p\alpha\in\langle I\rangle$.
  \end{lem}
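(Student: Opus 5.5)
Here $p\in\mathcal P_k(a,b)$ runs from $a$ to $b$ with $a,b\neq k$. The plan is to use the hypothesis to factor $\varphi_p$ through $P(k)^*$, and then pull that factorization back along $\pi$ to a factorization through $M_k$.

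Assume $\varphi_p\in\rad^2_{P(k)^*}(P(b),P(a))$. Then
\[
\varphi_p=\sum_j h_jg_j,\qquad g_j\in\rad_A(P(b),P(k)^*),\quad h_j\in\rad_A(P(k)^*,P(a)).
\]
The first step uses projectivity of $P(b)$ and surjectivity of $\pi$. We lift each $g_j$ to some $\tilde g_j\colon P(b)\to M_k$ with $g_j=\pi\tilde g_j$. This gives
\[
\varphi_p=\Big(\sum_j h_j\pi\Big)\tilde g,
\]
where $\tilde g$ denotes the relevant column, so $\varphi_p$ factors through $M_k\in\add\overline T$.

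The second step is to show this factorization is not in $\rad^2_{\overline T}$ by any trivial route. Since $p\in\mathcal P_k(a,b)$ and $a,b\neq k$, $\varphi_p\notin\rad^2_{\overline T}(P(b),P(a))$ (the lemma preceding Lemma~\ref{l: only need to check whether pass through P_k*}). Write $M_k=\bigoplus_j P(a_j)$ with $a_j\in N^-(k)$. Each composite $P(b)\to P(a_j)\to P(a)$ is a sum of path morphisms. If both factors are radical, every such composite lies in $\rad^2_{\overline T}$, which is a contradiction.

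Hence some component $P(b)\to P(a_j)$ of $\tilde g$ must contain a nonzero multiple of an isomorphism. Expanding in path bases and using Lemma~\ref{l: p_1+p_2 in I infer pi in I}, this forces $b=a_j\in N^-(k)$, and the component contains $\lambda e_b$ with $\lambda\neq0$. The path $p$ itself must then come from the component $h_j\pi_j\colon P(b)\to P(a)$.

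The third step converts this into the relation. Set $u:=h_j\pi_j$, where $\pi_j=\varphi_\alpha^*$ if $k$ has no loop, and $\pi_j\in\{\varphi_\alpha^*,\varphi_\rho^*\varphi_\alpha^*\}$ in the loop case (Proposition~\ref{p: relation if exist loop}), with $\alpha\colon b\to k$ the corresponding arrow. Comparing coefficients of the path $p$ shows that $u$ contains $\varphi_p$ with nonzero coefficient modulo terms in $\rad^2_{\overline T}$. We have
\[
u\,\varphi_\alpha=h_j\pi_j i_j .
\]
From $\pi i=0$ we get $\pi_j i_j=-\sum_{l\neq j}\pi_l i_l$. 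Expanding $u\varphi_\alpha$ in the path basis, the coefficient of $p\alpha$ must then vanish. This uses Lemma~\ref{l: p_1+p_2 in I infer pi in I} and the fact that the paths $q\alpha_l$, $l\neq j$, end in different arrows. In the loop case the companion term $\alpha\rho$ likewise ends in the different arrow $\rho$. We conclude $p\alpha\in\langle I\rangle$.

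The main obstacle is this last step: turning "$u\varphi_\alpha$ has zero $p\alpha$-coefficient" into an honest statement, given that $u$ is a linear combination of paths and the other summands may also end in $\alpha$. I would first try to reduce $h_j$ using the bases $\mathcal B_{k^*a}$ of Proposition~\ref{p:basis for k*c}. Modulo $\rad^2$, $h_j$ is a combination of the maps $\varphi_{[q\alpha']}$. If $\mathcal S_{k^*a}$ contains no path $q$ with $q\alpha\in\langle I\rangle$, then $h_j\pi_j$ lands in $\rad^2_{\overline T}$, or in the other $\alpha'$-component, which vanishes by Proposition~\ref{p: path ka is T irreducible}(2). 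This contradicts $\varphi_p\notin\rad^2_{\overline T}$. Otherwise $\varphi_p=\varphi_{[q\alpha]}\pi_j$ up to $\rad^2$ terms, so $p$ ends with $q$. Since $p\in\mathcal P_k$, we get $p=q$ and therefore $p\alpha\in\langle I\rangle$.
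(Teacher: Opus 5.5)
Your route is the paper's route. You lift along $\pi$ using projectivity of $P(b)$, use $p\in\mathcal P_k(a,b)$ to force a non-radical component $P(b)\to P(a_j)$ (so $b=a_j$), and then play $\pi i=0$ against the fact that the summands of $i$ are induced by paths with different terminal arrows. Your bookkeeping is more careful than the paper's reduction to a single composite $h_2h_1$ and its claim that one of $h_2\pi_1g_1$, $h_2\pi_2g_2$ must vanish. You work modulo $\rad^2_{\overline T}(P(b),P(a))$, whose elements have zero $p$-coefficient, so that $\varphi_p\equiv\sum_{l:\,a_l=b}H_l\pi_l$ with $H_l\in\rad_A(P(k)^*,P(a))$.

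The step you call the main obstacle is not one. Choose $l$ so that $H_l\pi_l$ has nonzero $p$-coefficient. Then $0=H_l\pi i=\sum_m H_l\pi_m i_m$, and the terms are combinations of paths with pairwise different terminal arrows. By Lemma~\ref{l: p_1+p_2 in I infer pi in I} each term vanishes separately. So $H_l\pi_l i_l=0$, which forces $p\,w_l\in\langle I\rangle$, where $w_l$ is the path inducing $i_l$.

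The genuine gap is in the loop case. There $M_k=P(b)\oplus P(b)$, with $i_1=\varphi_\alpha$, $i_2=\varphi_{\alpha\rho}$, $\pi_1=\varphi_\rho^*\varphi_\alpha^*$ and $\pi_2=\varphi_\alpha^*$. Your identity $u\varphi_\alpha=h_j\pi_j i_j$ is false for $\pi_j=\pi_2$, and that case cannot be avoided. For instance, whenever $p\alpha\in\langle I\rangle$, the map $h=\varphi_{[p\alpha]}\varphi_\rho^*$ satisfies $h\pi_2=\varphi_p$ by Proposition~\ref{p: path ka is T irreducible}. So a representation of $\varphi_p$ in $\rad^2_{P(k)^*}$ may put all of $p$ in the $\pi_2$-component. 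What the argument yields there is $u\varphi_{\alpha\rho}=0$, i.e.\ $p\alpha\rho\in\langle I\rangle$. You still need the gentle input $\alpha\rho\notin I$, together with (G4) and $p\notin\langle I\rangle$, to conclude $\Ter(p)\alpha\in I$ and hence $p\alpha\in\langle I\rangle$. This is exactly the last step of the paper's proof, and it is missing from yours.

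Your fallback via $\mathcal B_{k^*a}$ does not cover this case either. The same $h$ lies in $\rad^2_T(P(k)^*,P(a))$, so it is invisible modulo $\rad^2_T$, yet $h\pi_2=\varphi_p\notin\rad^2_{\overline T}(P(b),P(a))$. Once the remainder can pass through $\varphi_\rho^*$, your phrase ``up to $\rad^2$ terms'' conflates $\rad^2_T$ with $\rad^2_{\overline T}$. Drop the fallback and finish with the direct argument above, adding the $\alpha\rho\notin I$ step for the $\pi_2$-component in the loop case.
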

\begin{proof}
Because $\varphi_p\in\rad^2_{P(k)^*}(P(b),P(a))$ and $p$ is a single path in $\mathcal{P}_k(a,b)$, we can suppose that there is $h_1 \in \rad_A(P(b), P(k)^*)$ and $h_2 \in \rad_A(P(k)^*, P(a))$ such that  $\varphi_p = h_2 h_1$.
Since $P(b)$ is projective, there exists $g \in \Hom_A(P(b), M_k)$ such that $h_1 = \pi g$.  
It follows that $$\varphi_p = h_2 \pi g,$$ which means $\varphi_p$ factors through $M_k$. Since $p\in\mathcal{P}_k(a,b)$,  so $g\notin\rad_A(P(b), M_k)$,  thus $P(b)$ must be an indecomposable direct summand of $M_k$.

 \[\xymatrix{&&P(b)\ar@{.>}[d]^{g}\ar[dr]^{h_1}\ar[r]^{\varphi_{p}}&P(a)&
 \\0\ar[r]&P(k)\ar[r]^{i}&M_k\ar[r]^{ \pi}&P(k)^*\ar[r]\ar[u]^{h_2}&0
}\]
If $|M(k)|=1$, then  $M(k)=P(b)$ and $i=\varphi_{\alpha}$ for some arrow $\alpha: b\to k$. By $g\notin\rad_A(P(b), P(b))$, $g$ is an isomorphism. Using $\varphi_p = h_2 \pi g$ and 
$p$ is a single path, we can suppose that  $g = \id_{P(b)}$. Then $\varphi_p=h_2\pi$ and 
 $$\varphi_{p\alpha}=\varphi_p \varphi_{\alpha}=\varphi_p i=h_2\pi i=0.$$
Thus  $p\alpha\in\langle I\rangle$.

If $|M(k)|=2$, we suppose that  $M(k)=P(a_1)\oplus P(a_2)$, and the exchange exact sequence is \[\xymatrix{0\ar[r]&P(k)\ar[r]^{{\scriptstyle\left(\begin{smallmatrix} i_1 \\ i_2 \end{smallmatrix}\right)}\ \ \ \ }&P(a_1)\oplus P(a_2)\ar[r]^{\ \ \ \ (\pi_1,\pi_2)}&P(k)^*\ar[r]&0.
 }
\]
Take  $g={\left(\begin{array}{cc}
         g_1 \\
         g_2 
    \end{array}\right)}$. 
  \[\xymatrix{&&P(b)\ar@{.>}[d]^{{\scriptstyle\left(\begin{smallmatrix} g_1\\g_2\end{smallmatrix}\right)}}\ar[dr]^{h_1}\ar[r]^{\varphi_{p}}&P(a)&\\0\ar[r]&P(k)\ar[r]^{{\scriptstyle\left(\begin{smallmatrix} i_1 \\ i_2 \end{smallmatrix}\right)}\ \ \ \ }&P(a_1)\oplus P(a_2)\ar[r]^{\ \ \ \ (\pi_1,\pi_2)}&P(k)^*\ar[r]\ar[u]^{h_2}&0
 }
\]
Then $$\varphi_{p}=h_2\pi g=h_2(\pi_1,\pi_2){\left(\begin{array}{cc}
         g_1 \\
         g_2 
    \end{array}\right)}=h_2\pi_1g_1+h_2\pi_2g_2.$$ 
 Since  $p$ is a single path, either $h_2\pi_1g_1=0$ or $h_2\pi_2g_2=0$. Without loss of generality, assume $h_2 \pi_1 g_1 = 0$. Then $\varphi_p=h_2\pi_2g_2 $ and 
   $g_2$ is an isomorphism, so $a_2=b$. Thus, using $p$ is a single path, we also can assume $g_2 = \id_{P(b)}$. Consequently, $\varphi_p = h_2 \pi_2$.
Because $$0=h_2\pi i=h_2\pi_1i_1+h_2\pi_2i_2=h_2\pi_1i_1+\varphi_{p}i_2,$$
associated with $i_1$ and  $i_2$ are induced by different paths, so
$\varphi_{p}i_2=0$. 

If $i_2=\varphi_{\alpha_2}$ for some arrow $\alpha_2:b\to k$,  then $$0=\varphi_{p}i_2=\varphi_{p\alpha_2},$$
so $p\alpha_2\in \langle I\rangle.$

If $i_2=\varphi_{\alpha\rho}$ for a non-loop incoming arrow $\alpha: b\to k$ and a loop $\rho$ at $k$, then we  get $p\alpha\rho\in \langle I\rangle$. Note that $\alpha\rho\notin \langle I\rangle$, we must have $p\alpha\in \langle I\rangle$. 
\end{proof}

\begin{prop}\label{p:path ab is T irreducible}
 Let $a,b\in Q_0$ with $k\notin\{a,b\}$, and let $p\in\mathcal P_k(a,b)$. Then
\[
\varphi_p\in\Irr_{T}^\circ(P(b),P(a)) \text{ if and only if } p\alpha\notin\langle I\rangle  \text{ for every non-loop arrow $\alpha:b\to k$.}
\]

\end{prop}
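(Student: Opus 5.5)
Since $p\in\mathcal P_k(a,b)$, the lemma preceding Lemma~\ref{l: only need to check whether pass through P_k*} gives $\varphi_p\in\Irr^\circ_{\overline T}(P(b),P(a))$. By Lemma~\ref{l: only need to check whether pass through P_k*}, $\varphi_p$ is then $T$-irreducible if and only if $\varphi_p\notin\rad^2_{P(k)^*}(P(b),P(a))$. So the plan is to show
\[
\varphi_p\in\rad^2_{P(k)^*}(P(b),P(a)) \iff p\alpha\in\langle I\rangle \text{ for some non-loop arrow }\alpha:b\to k,
\]
and prove the two implications separately.

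\emph{If $\varphi_p\in\rad^2_{P(k)^*}(P(b),P(a))$, then some $p\alpha\in\langle I\rangle$.} This is exactly Lemma~\ref{l: p in rad2 means pa in I}. The arrow it produces is non-loop, because its source $b$ is different from $k$.

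\emph{If $p\alpha\in\langle I\rangle$ for some non-loop $\alpha:b\to k$, then $\varphi_p\in\rad^2_{P(k)^*}(P(b),P(a))$.}
\begin{enumerate}
\item Apply Proposition~\ref{p: path ka is T irreducible} with $c=a$ and the roles of the letters renamed: the path is $p\in\mathcal P_k(a,b)$ and the arrow is $\alpha:b\to k$. This gives $\varphi_{[p\alpha]}\in\IrrTo(P(k)^*,P(a))$ with $\varphi_p=\varphi_{[p\alpha]}\varphi_\alpha^*$ when $k$ has no loop, and $\varphi_p=\varphi_{[p\alpha]}\varphi_\rho^*\varphi_\alpha^*$ when $k$ carries a loop $\rho$.
\item The factor $\varphi_{[p\alpha]}$ is a radical morphism: it is $T$-irreducible and $P(k)^*\not\cong P(a)$.
\item The factor $\varphi_\alpha^*$ (respectively $\varphi_\rho^*\varphi_\alpha^*$) lies in $\rad_A(P(b),P(k)^*)$, again because the modules are non-isomorphic indecomposables. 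Proposition~\ref{p: basis for ak*} gives $\varphi_\alpha^*\in\IrrTo$.
\item Hence $\varphi_p$ factors as a composition of radical maps through $P(k)^*$, so $\varphi_p\in\rad^2_{P(k)^*}(P(b),P(a))$.
\end{enumerate}

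The one point to check in the second direction is that $P(k)^*\not\cong P(a)$ and $P(k)^*\not\cong P(b)$. This holds because $T$ is basic and $a,b\neq k$; equivalently, $P(k)^*$ is not projective, since otherwise the exchange sequence would split and $P(k)$ would lie in $\add\overline T$.

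The main obstacle is therefore the first implication, which has already been absorbed into Lemma~\ref{l: p in rad2 means pa in I}. With that lemma and Proposition~\ref{p: path ka is T irreducible} in hand, the proof of this proposition should reduce to combining the two directions as above.
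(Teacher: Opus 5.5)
Your proposal is correct and follows the paper's route. You use the factorization $\varphi_p=\varphi_{[p\alpha]}\varphi_\alpha^*$ (respectively $\varphi_{[p\alpha]}\varphi_\rho^*\varphi_\alpha^*$) from Proposition~\ref{p: path ka is T irreducible} for one direction, and the reduction of Lemma~\ref{l: only need to check whether pass through P_k*} combined with Lemma~\ref{l: p in rad2 means pa in I} for the other; your remarks that the factors are radical maps and that the resulting arrow $\alpha:b\to k$ is non-loop only make explicit what the paper leaves implicit.
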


\begin{proof}
Suppose first that $p\alpha\in\langle I\rangle$ for some non-loop arrow $\alpha:b\to k$. By Proposition~\ref{p: path ka is T irreducible}, there is a morphism $\varphi_{[p\alpha]}:P(k)^*\to P(a)$. If there is no loop at $k$, then
\[
\varphi_p
=
\varphi_{[p\alpha]}\varphi_\alpha^*,
\]
so $\varphi_p\notin\Irr_{T}^\circ(P(b),P(a))$. If there is a loop $\rho$ at $k$, then
\[
\varphi_p
=
\varphi_{[p\alpha]}\varphi_\rho^*\varphi_\alpha^*,
\]
so $\varphi_p\notin\Irr_{T}^\circ(P(b),P(a))$. 
Both cases contradict the assumption that $\varphi_p\in\Irr_{T}^\circ(P(b),P(a))$. 

Conversely, suppose that $p\alpha\notin\langle I\rangle$ for every non-loop arrow $\alpha:b\to k$. If $\varphi_p \notin \IrrTo(P(b), P(a))$, because $p\in\mathcal{P}_k(a,b)$, which means $\varphi_p \in \Irr_{\overline{T}}^{\circ}(P(b), P(a))$, by Lemma~\ref{l: only need to check whether pass through P_k*},  $\varphi_p\in\rad^2_{P(k)^*}(P(b),P(a))$. By Lemma~\ref{l: p in rad2 means pa in I},  there is an arrow $\alpha: b\ra k$ such that $p\alpha\in\langle I\rangle$. A contradiction with the assumption. Thus, we must have $\varphi_p \in \IrrTo(P(b), P(a)).$
\end{proof}




\begin{proposition}\label{p:basis for ba}
    Let $a,b\in Q_0$ with $k\notin\{a,b\}$, and set$$\mathcal{S}_{ba}=\{p\mid p\in \mathcal{P}_k(a,b), p\alpha\notin\langle I\rangle \text{ for every } \alpha: b\ra k\}.$$
    Then $$\mathcal{B}_{ba}=\{\overline{\varphi_p}\mid p\in \mathcal{S}_{ba}\} $$ 
    is a $\bbk$-basis of $\IrrT(P(b),P(a))$.
\end{proposition}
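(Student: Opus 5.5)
We prove that $\mathcal{B}_{ba}$ is linearly independent and that it spans $\IrrT(P(b),P(a))$.

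\emph{Independence.} By Proposition~\ref{p:path ab is T irreducible}, each $\varphi_p$ with $p\in\mathcal S_{ba}$ lies in $\IrrTo(P(b),P(a))$. Suppose a nontrivial combination $f=\sum\lambda_p\varphi_p$ lies in $\rad^2_T(P(b),P(a))$. We reduce to the single-path situation used in Lemma~\ref{l: only need to check whether pass through P_k*}. Write $f=g_2g_1+h_2h_1$ with $g_1,g_2$ factoring radically through $\add\overline T$, and $h_1,h_2$ radical through $\add P(k)^*$. Expand every morphism $P(b)\to P(a)$ in the path basis. Since the $\varphi_p$ are pairwise distinct basis elements, Lemma~\ref{l: p_1+p_2 in I infer pi in I} allows us to compare coefficients path by path. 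The path $p$ must then appear with nonzero coefficient either in the expansion of $g_2g_1$ or in that of $h_2h_1$.

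In the first case, the summands of $g_2g_1$ are spanned by paths with an internal vertex different from $k$. So $p$ itself would have such a vertex, contradicting $p\in\mathcal P_k(a,b)$.

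In the second case, we repeat the argument of Lemma~\ref{l: p in rad2 means pa in I}. Lift $h_1$ through $\pi$ to get $h_1=\pi g$, so that every path appearing in $h_2h_1$ factors through $M_k$. A path in $\mathcal P_k(a,b)$ can occur there only through a component of $g$ that is an isomorphism. Then $p\alpha\in\langle I\rangle$ for some $\alpha:b\to k$, contradicting $p\in\mathcal S_{ba}$. This coefficient-tracking is the delicate point: one has to check that the argument of Lemma~\ref{l: p in rad2 means pa in I} survives when $\varphi_p$ is only one term of a sum. Since all the maps involved are linear combinations of path morphisms and $i,\pi$ have explicit path components, this amounts to projecting onto the coefficient of $p$.

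\emph{Spanning.} $\Hom_A(P(b),P(a))$ is spanned by the $\varphi_q$ for nonzero paths $q:a\leadsto b$, so it suffices to show that each radical $\varphi_q$ with $\varphi_q\notin\rad^2_T$ is congruent modulo $\rad^2_T$ to a multiple of some $\varphi_p$ with $p\in\mathcal S_{ba}$. If $q$ has an internal vertex $v\neq k$, then $\varphi_q\in\rad^2_{\overline T}\subseteq\rad^2_T$. Otherwise $q\in\mathcal P_k(a,b)$, and there are two possibilities. If $q\alpha\notin\langle I\rangle$ for every $\alpha:b\to k$, then $q\in\mathcal S_{ba}$. If instead $q\alpha\in\langle I\rangle$ for some non-loop $\alpha:b\to k$, then Proposition~\ref{p: path ka is T irreducible} gives $\varphi_q=\varphi_{[q\alpha]}\varphi_\alpha^*$, or $\varphi_q=\varphi_{[q\alpha]}\varphi_\rho^*\varphi_\alpha^*$ when $k$ carries a loop $\rho$. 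This is a composition of radical maps through $P(k)^*$, so $\varphi_q\in\rad^2_T$.

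Hence the residue classes of the $\varphi_p$ with $p\in\mathcal S_{ba}$ span $\IrrT(P(b),P(a))$. Combined with independence, $\mathcal{B}_{ba}$ is a basis.
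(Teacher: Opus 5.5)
Your proof is correct and follows the paper's route. Independence comes from splitting an element of $\rad^2_T(P(b),P(a))$ into a part through $\add\overline T$ and a part through $\add P(k)^*$ and comparing coefficients in the path basis. Spanning comes from sorting nonzero paths $q:a\leadsto b$ into three kinds: those with an internal vertex $\neq k$, those with $q\alpha\in\langle I\rangle$ (which factor through $P(k)^*$ by Proposition~\ref{p: path ka is T irreducible}), and those in $\mathcal S_{ba}$.

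Your explicit coefficient projection in the $P(k)^*$-part makes precise the step the paper compresses into a single appeal to Lemma~\ref{l: p_1+p_2 in I infer pi in I}. In that projection, $h_2\pi$ is radical, so only the scalar part of the lift $g$ at summands of $M_k$ isomorphic to $P(b)$ can contribute the path $p$. One then reads off $p\alpha\in\langle I\rangle$ from $h_2\pi i=0$.
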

\begin{proof}

Let $p_1,\cdots,p_t$ be all the paths in $\mathcal{S}_{ba}$. By Lemma~\ref{l: p_1+p_2 in I infer pi in I}, $\varphi_{p_1}, \cdots,\varphi_{p_t}$ are linearly independent. By Proposition~\ref{p:path ab is T irreducible},  $\varphi_{p_j}\in \IrrTo(P(b),P(a))$ for $1\le j\le t$.

Suppose $\sum_{i=1}^t \lambda_i \overline{\varphi_{p_i}} = 0$ in $\Irr_T(P(b), P(a))$ for some $\lambda_i\in\bbk$, which means
$$f=\lambda_1\varphi_{p_1}+\cdots+\lambda_t\varphi_{p_t}\in \rad_T^2(P(b),P(a)).$$ 
Then there is $M\oplus P_k^*$ and $g_1\in\rad_A(P(b),M),g_2\in\rad_A(M,P(a))$, $h_1\in\rad_A(P(b),P_k^*),h_2\in\rad_A(P_k^*,P(a))$ such that $$f-g_2g_1-h_2h_1=0$$ where $M\in\add\overline{T}$, $P_k^*\in\add P(k)^*$. By Proposition \ref{p:path ab is T irreducible}, we know for each $j$, $\varphi_{p_j}\notin \rad_{\overline{T}}^2(P(b),P(a))$ and $\varphi_{p_j}\notin \rad_{P(k)^*}^2(P(b),P(a))$, so by  Lemma~\ref{l: p_1+p_2 in I infer pi in I}, we must have $$\lambda_j\varphi_{p_j}=0.$$
 Thus $\lambda_j=0$ for each $j$. Then we get $\mathcal{B}_{ba}$ is linearly independent.


On the other hand,  let $p$ be a path from $a$ to $b$ such that $\varphi_p\in \IrrTo (P(b),P(a))$.
Thus $\varphi_p\in \Irr_{\overline{T}}(P(b),P(a))$, so we must have $p\in \mathcal{P}_k(a,b)$. Then by Proposition \ref{p:path ab is T irreducible}, one can get that  for each arrow $\alpha: b\ra k$,  $p\alpha\notin \langle I\rangle $,  thus $p\in \mathcal{S}_{ba}$, so
$\overline{\varphi_p}\in \mathcal{B}_{ba}$. Since any morphism from $P(b)$ to $P(a)$ is a linear combination of paths morphism from $a$ to $b$, so $\mathcal{B}_{ba}$ spans $\IrrT(P(b),P(a))$. 

 Combining linear independence and spanning, $\mathcal{B}_{ba}$ forms a $\bbk$-basis of $\IrrT(P(b), P(a))$.
\end{proof}

\section{Endomorphism algebra of the new tilting module}\label{s:s6 endo}
 Throughout this section, let $A=\bbk Q/\langle I\rangle$ be a gentle algebra, and let $k\in Q_0$ be a vertex at which the tilting mutation exists.
We retain the notation
\[
\overline T=\bigoplus_{\ell\neq k}P(\ell),
\qquad
T=\overline T\oplus P(k)^*.
\]


\subsection{Basis}
For vertices $a,b,c \in Q_0$ with $k \notin \{a,b,c\}$, we recall the bases obtained in the preceding section:
\begin{align*}
    &\mathcal{B}_{k^*k^*} = \{\overline{\varphi_{\rho}^*} \mid \rho \text{ is a loop at } k\} , \\
    &\mathcal{B}_{ak^*} = \{\overline{\varphi_{\alpha}^*} \mid \alpha \colon a \to k\}, \\
    &\mathcal{B}_{k^*c} = \{\overline{\varphi_{[p\alpha]}} \mid p \in \mathcal{P}_k(c,a),\, \alpha \colon a \to k,\, p\alpha \in \langle I \rangle, a\in N^-(k)\}, \\
    &\mathcal{B}_{ba} = \{\overline{\varphi_p} \mid p \in \mathcal{P}_k(a,b),\, p\alpha \notin \langle I \rangle \text{ for every } \alpha \colon b \to k\}.
\end{align*}
Set
\begin{align*}
    \mathcal{B} &= \mathcal{B}_{k^*k^*} \cup \biggl(\bigcup_{a\in N^-(k)} \mathcal{B}_{ak^*}\biggr) \cup \biggl(\bigcup_{c\in N^+(k)} \mathcal{B}_{k^*c}\biggr) \cup \biggl(\bigcup_{\substack{a \neq k \\ b \neq k}} \mathcal{B}_{ba} \biggr).
\end{align*}
Then $\End_A(T)$ is generated as an algebra by $\mathcal{B}$ together with the idempotents $\{\xi_x\}_{x \in (Q_T)_0}$, which form a complete set of primitive orthogonal idempotents. Here, $\xi_{k^*}$ corresponds to $P(k)^*$, while $\xi_x$ corresponds to $P(x)$ for $x \neq k$. 
  
\subsection{The Gabriel quiver}

Set
\[
B:=\mu_k^+(A)=\End_A(T)=\bbk  Q_T/\langle I_T\rangle.
\]
The vertex set of the Gabriel quiver $Q_T$ of $B$ is
\[
(Q_T)_0=(Q_0\setminus\{k\})\cup\{k^*\},
\]
where $k^*$ corresponds to $P(k)^*$.
By the preceding propositions, $(Q_T)_1$ consists of the following four types of arrows.

\begin{enumerate}
\item[\bf(T1)]
If $k$ carries a loop $\rho$, then there is a loop
\[
\rho^*:k^*\to k^*
\]
corresponding to $\overline{\varphi_\rho^*}$.

\item[\bf(T2)]
For every non-loop incoming arrow $\alpha:a\to k$, there is an arrow
\[
\alpha^*:k^*\to a
\]
corresponding to $\overline{\varphi_\alpha^*}$.

\item[\bf(T3)]
For every non-loop incoming arrow $\alpha:a\to k$ and every
$p\in\mathcal P_k(c,a)$ satisfying $p\alpha\in\langle I\rangle$, there is an arrow
\[
[p\alpha]:c\to k^*
\]
corresponding to $\overline{\varphi_{[p\alpha]}}$.

\item[\bf(T4)]
For every $p\in\mathcal P_k(a,b)$ satisfying
\[
p\alpha\notin\langle I\rangle
\]
for every non-loop arrow $\alpha:b\to k$,  there is an arrow
\[
[p]:a\to b
\]
corresponding to $\overline{\varphi_p}$.
\end{enumerate}
For convenience, we also denote by $\mathbf{(Ti)}$ the set of all the arrows of type $\mathbf{(Ti)}$ in $(Q_T)_1$ for each $1\leq i\leq 4$.
For each arrow $\eta$ of $Q_T$, for clarity, we denote the corresponding irreducible morphism by $\phi_\eta$; thus
\[
\phi_{\rho^*}=\varphi_\rho^*,
\quad
\phi_{\alpha^*}=\varphi_\alpha^*,
\quad
\phi_{[p\alpha]}=\varphi_{[p\alpha]},
\quad
\phi_{[p]}=\varphi_p.
\]
 To describe the construction more explicitly, we introduce the following notation.
\begin{definition}
Let $\alpha \colon a \to k$ be a non-loop incoming arrow at $k$. We denote by
\begin{itemize}
    \item $p_{\alpha}^-:$ the shortest nonzero path such that $p_{\alpha}^- \alpha \in \langle I\rangle $ and $s(p_{\alpha}^-) \neq k$, if such a path exists.
     \item $p_{\alpha}^+:$ the shortest path such that $\alpha p_{\alpha}^+\notin\langle I\rangle$,  $t(p_{\alpha}^+)\neq k$, and $p_{\alpha}^+ \beta \notin \langle I\rangle $ for every arrow $\beta: t(p_{\alpha}^+)\to k$, if such a path exists.
\end{itemize}
\end{definition}
\begin{example}
\begin{enumerate}
    \item  In Example~\ref{eg:mutation in without loop and 2-cycle}, $p_{\alpha_1}^-=\gamma$, $p_{\alpha_1}^+=\beta_2$.
    \item   In Example~\ref{eg:mutation in with loop and no 2-cycle}, $p_{\alpha}^-=\gamma$, $p_{\alpha}^+=\rho\beta$.
    \item  In Example~\ref{eg:mutation in without loop and have 2-cycle}, $p_{\alpha_1}^-=\alpha_1\beta_2$, $p_{\alpha_1}^+$ doesn't exist.
\end{enumerate}

\end{example}

\begin{remark}\label{r: arrows in QT}

 \begin{enumerate}
 \item By the definition of gentle algebra, whenever $p_\alpha^{\pm}$ exists, it is uniquely determined. 
 \item For a non-loop incoming arrow $\alpha$ at $k$, $[p_{\alpha}^-\alpha]\in \bf(T3)$, while $[\alpha p_{\alpha}^+]\in \bf(T4)$.
 \item Let $[p]\in \bf(T4)$, it is easy to get that  either $[p]=[\alpha p_{\alpha}^+]$ for some non-loop incoming arrow $\alpha$ at $k$ or $[p]=[\gamma]$  for an arrow $\gamma:a\to b$ in $Q_1$ with $a,b \in Q_0 \setminus \{k\}$ satisfying  $\gamma\alpha \notin  I $ for every arrow $\alpha \colon b \to k$ in $Q_1$.
 \end{enumerate}
     \end{remark}
Thus, for each non-loop  incoming  arrow $\alpha$ at $k$, there are  at most three special types of arrows in $(Q_T)_1$:
$$\alpha^*,[p_{\alpha}^-\alpha],[\alpha p_{\alpha}^+].$$
Denote the set of arrows in $Q_1$ that are directly modified during the mutation at $k$ by
$$\AmutQ=\AinQ(k)\cup \AoutQ(k)\cup\{\gamma\in Q_1\mid \exists \alpha\in \AoinQ(k)) \text{ such that  } \gamma\alpha\in I\}.$$
The arrows that remain completely unchanged form the complementary set
\[
\AfixQ:= Q_1 \setminus \AmutQ.
\]
Moreover, set 
\begin{itemize}
    \item $\bfA_{T}^{\new}=\{\alpha^*,[p_{\alpha}^-\alpha],[\alpha p_{\alpha}^+]\mid \alpha \in\Aoin(k)\}\cup\{\rho^*\mid \rho \text{ is a loop at $k$}\}$;
    \item $\bfA_T^{\old}=\{[\gamma]\mid \gamma\in Q_1, \gamma\in \AfixQ\}.$
\end{itemize}

\begin{prop}\label{p:Gabriel-quiver}
The quiver $Q_T$ described above is the Gabriel quiver of $\mu_k^+(A)=\End_A(T)$. Moreover, $$(Q_T)_1=\bfA_{T}^{\new}\cup \bfA_{T}^{\old}.$$
\end{prop}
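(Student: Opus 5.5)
The plan is to read off the Gabriel quiver from the spaces of $T$-irreducible morphisms. Then I would match the four families (T1)--(T4) with the sets $\bfA_T^{\new}$ and $\bfA_T^{\old}$.

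\textbf{Step 1: arrows of $Q_T$.} Since $T$ is basic with indecomposable summands $P(x)$ for $x\neq k$ and $P(k)^*$, the radical of $B=\End_A(T)$ is $\rad_A(T,T)$. Its square is the span of compositions of radical maps through summands of $T$, so $e_y(\rad B/\rad^2 B)e_x$ identifies with $\IrrT(T_x,T_y)$. By the standard description of the Gabriel quiver of a basic algebra, the number of arrows between the vertices of $Q_T$ equals $\dim\IrrT$ of the corresponding pair of summands. The orientation is the one already used for $A$: a morphism $P(y)\to P(x)$ gives an arrow $x\to y$.

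Invoking the bases computed earlier then gives the arrow sets directly:
\begin{itemize}
\item Proposition~\ref{p: relation if exist loop} gives (T1);
\item Proposition~\ref{p: basis for ak*} gives (T2);
\item Proposition~\ref{p:basis for k*c} gives (T3);
\item Proposition~\ref{p:basis for ba} gives (T4).
\end{itemize}
The algebra is generated by $\mathcal B$ and the idempotents, so no further arrows occur.

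\textbf{Step 2: the new arrows.} Type (T1) is exactly $\{\rho^*\}$ and type (T2) is $\{\alpha^*\mid\alpha\in\Aoin(k)\}$.

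For (T3), let $p\in\mathcal P_k(c,a)$ with $p\alpha\in\langle I\rangle$. I claim $p=p_\alpha^-$. Since $s(p)\neq k$, the path $p_\alpha^-$ exists. By Lemma~\ref{l:p is the shortest}(2), applied with both $p$ and $p_\alpha^-$, each path ends with the other, so they are equal. The hypothesis $k\notin\{a,b\}$ of that lemma holds because $\alpha$ is non-loop. If instead $p_\alpha^-$ had an internal vertex other than $k$, then minimality of length would be violated by its terminal segment. Conversely, $p_\alpha^-$ lies in $\mathcal P_k$ by the same minimality argument. Hence (T3) $=\{[p_\alpha^-\alpha]\}$.

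\textbf{Step 3: type (T4), the main obstacle.} Take $p\in\mathcal P_k(a,b)$ with $p\beta\notin\langle I\rangle$ for all non-loop $\beta:b\to k$. There are two cases.
\begin{itemize}
\item If $p$ has no internal vertex, then $p=\gamma$ is an arrow between vertices different from $k$. The condition says $\gamma\notin\AmutQ$, so $[\gamma]\in\bfA_T^{\old}$.
\item Otherwise $p$ passes through $k$, so $p=\alpha p'$ with $\alpha\in\Aoin(k)$. Here $p'$ starts at $k$, ends at $t(p')=b\neq k$, and has all internal vertices equal to $k$.
\end{itemize}
In the second case I must show $p'=p_\alpha^+$. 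The candidates for $p'$ are paths that return to $k$ repeatedly and then leave. By (G3), at each return to $k$ the continuation of a nonzero path is forced, so the candidates form a single chain ordered by length. I would show that any proper prefix of $p'$ exiting $k$ already at an earlier vertex $b'\neq k$ is impossible, since $p$ is in $\mathcal P_k$ and $p'$ only revisits $k$. Thus $p'$ is the unique candidate of its length satisfying the conditions. The subtle point is that the shortest admissible candidate is the only one for which $\alpha p'$ stays in $\mathcal P_k$ and obeys the $\beta$-condition. Any longer one contains an exit to a vertex $\neq k$, which cannot be internal. This yields $p'=p_\alpha^+$, so (T4) is contained in $\{[\alpha p_\alpha^+]\}\cup\bfA_T^{\old}$, which is Remark~\ref{r: arrows in QT}(3).

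The reverse inclusions follow in the same way. The path $\alpha p_\alpha^+$ is in $\mathcal P_k$ by minimality. Every $\gamma\in\AfixQ$ satisfies the (T4) condition by the definition of $\AmutQ$. Together with Steps 1 and 2 this gives $(Q_T)_1=\bfA_T^{\new}\cup\bfA_T^{\old}$.
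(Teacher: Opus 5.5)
Your Step 1 and your argument for $\textbf{(T4)}\subseteq\{[\alpha p_\alpha^+]\}\cup\bfA_T^{\old}$ follow the paper's route. The paper reads $Q_T$ off the four bases and then cites Remark~\ref{r: arrows in QT}, and your prefix argument for that inclusion is fine.

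The gap is the sentence ``the path $\alpha p_\alpha^+$ is in $\mathcal P_k$ by minimality''. Minimality of $p_\alpha^+$ is only among paths $q$ that satisfy the $\beta$-condition at $t(q)$, and prefixes do not inherit this condition. If $\alpha p_\alpha^+=\alpha q\,q''$ with $t(q)=v\neq k$, the prefix $q$ is a competitor only if $\Ter(q)\beta\notin I$ for all $\beta\colon v\to k$, and nothing forces that. For $p_\alpha^-$ your minimality argument is correct, because $\Ter(p)\alpha\in I$ depends only on the last arrow and so passes to terminal segments.

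In fact, with $p_\alpha^+$ defined literally, $\bfA_T^{\new}\subseteq(Q_T)_1$ is false. Take:
\begin{itemize}
\item vertices $a,k,v,w$;
\item arrows $\alpha\colon a\to k$, $\delta\colon k\to v$, $\beta\colon v\to k$, $\varepsilon\colon v\to w$;
\item relations $I=\{\delta\beta,\beta\delta\}$.
\end{itemize}
This pair is gentle, and the mutation at $k$ exists since $\alpha\delta\notin I$. The path $\delta$ is not admissible because $\delta\beta\in I$, so the literal definition gives $p_\alpha^+=\delta\varepsilon$. But $\alpha\delta\varepsilon$ has the internal vertex $v\neq k$, and $\varphi_{\alpha\delta\varepsilon}=\varphi_{\alpha\delta}\varphi_\varepsilon$ factors through $P(v)$. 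Hence $(Q_T)_1=\{\alpha^*,\beta^*,[\alpha\delta\beta],[\varepsilon]\}$, while $\bfA_T^{\new}\cup\bfA_T^{\old}$ also contains $[\alpha\delta\varepsilon]$.

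The paper's proof has the same hole: it takes $[\alpha p_\alpha^+]\in\textbf{(T4)}$ from Remark~\ref{r: arrows in QT}(2) without argument. The paper itself asserts that $p_{\alpha_1}^+$ does not exist in Example~\ref{eg:mutation in without loop and have 2-cycle}, although the literal definition yields $\beta_2\gamma_1\gamma_2$ there. So the intended meaning is the ``first exit'' from $k$. That is, $p_\alpha^+$ is the unique path ($\beta$ or $\rho\beta$, forced by (G3)) with $\alpha p_\alpha^+\in\mathcal P_k(a,t(p_\alpha^+))$, and it exists only if it satisfies the $\beta$-condition.

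You need to build the requirement $\alpha p_\alpha^+\in\mathcal P_k$ into the definition. Then $[\alpha p_\alpha^+]\in\textbf{(T4)}$ holds by definition, and your chain argument gives the rest.

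A minor point: Lemma~\ref{l:p is the shortest} assumes a common source. When you compare $p$ with $p_\alpha^-$, add that if the sources differed, the source of the shorter path would be an internal vertex $\neq k$ of the longer one.
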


\begin{proof}
The vertices of $Q_T$ correspond to the indecomposable direct summands of the basic tilting module $T$. Propositions~\ref{p: relation if exist loop}, \ref{p: basis for ak*}, \ref{p:basis for k*c}, and \ref{p:basis for ba} provide bases of all irreducible morphism spaces between these summands. Hence the four classes {\rm(T1)--(T4)} give exactly the arrows of the Gabriel quiver of $\End_A(T)$.

 By definition, $\bfA_{T}^{\new}\subseteq (Q_T)_1$. Let $[\gamma]\in \bfA_{T}^{\old}$ for an arrow $\gamma : a\to b$ in  $ \AfixQ$, we have $k\notin\{a,b\}$ and $\gamma\alpha\notin I$ for every arrow $\alpha: b\to k$. Hence $[\gamma]\in{\bf(T4)}$, so $\bfA_{T}^{\old}\subset (Q_T)_1$. Conversely, the construction process of $(Q_T)_1$ and  Remark~\ref{r: arrows in QT} give
    $(Q_T)_1\subseteq \bfA_{T}^{\new}\cup \bfA_{T}^{\old}$, which proves the result.
\end{proof}

\subsection{Quadratic zero relations}
 In this subsection, we will give a precise description of quadratic zero relations in $Q_T$.

\begin{definition}\label{d:quadratic relations}
We define the set of \emph{quadratic relations} $I_{\mb}$ in  $Q_T$ as follows.
\begin{enumerate}
    \item \textbf{Loop-free case:} 
     If there is no loop  at $k$, 
\begin{align*}
 I_\mb&=\{[\gamma_1][\gamma_2]\mid\gamma_1:a\to b,\gamma_2:b\to c,  \gamma_1,\gamma_2\in \AfixQ, \gamma_1\gamma_2\in I,a,b,c\in Q_0\setminus\{k\} \}\\
  &\cup \{[p_{\alpha}^-\alpha]\alpha'^*\mid   \alpha\neq \alpha',  \alpha,\alpha'\in\AinQ(k)\} \\
   &\cup \{\alpha^*[\alpha p_{\alpha}^+]\mid \alpha\in\AinQ(k)\}\\
 &\cup\{[\gamma][p_{\alpha}^-\alpha]\mid  \alpha\in\AinQ(k), \gamma\in \AfixQ, t(\gamma)=s(p_{\alpha}^-), \gamma p_{\alpha}^-\in  \langle I \rangle\}\\
 &\cup \{\alpha^*[p_{\alpha'}^-\alpha']\mid    \alpha,\alpha'\in\AinQ(k),  \alpha=\Ini(p_{\alpha'}^-)\} \\
    &\cup \{[\alpha' p_{\alpha'}^+][p_{\alpha}^-\alpha]\mid \alpha',\alpha\in\AinQ(k), t(p_{\alpha'}^+)=s(p_{\alpha}^-), p_{\alpha'}^+p_{\alpha}^-\in  \langle I \rangle\} \\
    &\cup \{[\alpha p_{\alpha}^+][\gamma]\mid \alpha\in\AinQ(k), \gamma\in \AfixQ, t(p_{\alpha}^+)=s(\gamma), p_{\alpha}^+\gamma\in  \langle I \rangle\}.
\end{align*}
In general,
\begin{align*}
I_{\mathcal{B}} &= \underbrace{\{ [p][q] \mid [p], [q]\in{\bf(T4)}, pq \in \langle I \rangle \} \cup \{ [p][p_{\alpha}^-\alpha] \mid [p] \in{\bf(T4)}, \alpha \in \AinQ(k),pp_{\alpha}^- \in \langle I \rangle \}}_{\text{Inherited from } I} \\
&\cup \underbrace{\{ [p_{\alpha}^-\alpha]\alpha'^* \mid \alpha \neq \alpha' \in \AinQ(k) \} \cup \{ \alpha^*[\alpha p_{\alpha}^+] \mid \alpha \in \AinQ(k) \} \cup \{ \alpha^*[p_{\alpha'}^-\alpha'] \mid  \alpha = \Ini(p_{\alpha'}^-) \}}_{\text{Mutation relations at } k^*}.
\end{align*} 
    \item \textbf{Loop case:} If there is a loop $\rho$ at $k$ and $\alpha$ is the unique non-loop incoming arrow,
    \begin{align*}
 I_\mb&=\{[\gamma_1][\gamma_2]\mid\gamma_1:a\to b,\gamma_2:b\to c,  \gamma_1,\gamma_2\in \AfixQ,  \gamma_1\gamma_2\in I,a,b,c\in Q_0\setminus\{k\} \}\\
  &\cup \{[\alpha p_{\alpha}^+][\gamma]\mid  \gamma\in \AfixQ, t(p_{\alpha}^+)=s(\gamma), p_{\alpha}^+\gamma\in  \langle I \rangle\} \\
 &\cup\{[\gamma][p_{\alpha}^-\alpha]\mid  \gamma\in \AfixQ, t(\gamma)=s(p_{\alpha}^-), \gamma p_{\alpha}^-\in  \langle I \rangle \}\\
  &\cup \{\alpha^*[p_{\alpha}^-\alpha]\mid    \alpha=\Ini(p_{\alpha}^-)\} \\
&\cup \{[p_{\alpha}^-\alpha]\alpha^*, \alpha^*[\alpha p_{\alpha}^+], {(\rho^*)}^2\}. 
\end{align*}
\end{enumerate}
\end{definition}
By the construction of $Q_T$, we have the following basic relation between $\varphi$ and $\phi$.
\begin{lem} \label{l: morpshism relations of phi and varphi}
Let $\alpha\in\AoinQ(k)$. Then
\begin{itemize}
    \item $\phi_{[p_{\alpha}^-\alpha]}\phi_{\alpha^*}=\varphi_{[p_{\alpha}^-\alpha]}\varphi_{\alpha}^*=\varphi_{p_{\alpha}^-}$ if there is no loop at $k$;
    \item  $\phi_{[p_{\alpha}^-\alpha]}\phi_{\rho^*}\phi_{\alpha^*}=\varphi_{[p_{\alpha}^-\alpha]}\varphi_{\rho}^*\varphi_{\alpha}^*=
    \varphi_{p_{\alpha}^-}$ if there is a loop $\rho$ at $k$;
    \item $\phi_{[\alpha p_{\alpha}^+]}=\varphi_{\alpha p_{\alpha}^+}$.
\end{itemize}
\end{lem}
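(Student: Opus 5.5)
This lemma is essentially a bookkeeping statement: it identifies the irreducible morphisms $\phi_\eta$ attached to the new arrows of $Q_T$ with the morphisms built in Section~\ref{s:s5 irreducible}, and then reads off the composition identities already established there. The plan is to treat the three bullet points separately. In each case we first check that the arrow in question really is an arrow of $Q_T$ of the claimed type, so that $\phi_\eta$ is defined. Then we invoke the corresponding factorization from Proposition~\ref{p: path ka is T irreducible}, or the definition of type (T4) arrows.

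For the first two bullets, fix $\alpha\in\AoinQ(k)$ and assume $p_\alpha^-$ exists. By definition $p_\alpha^-$ is the shortest nonzero path with $p_\alpha^-\alpha\in\langle I\rangle$ and $s(p_\alpha^-)\neq k$. The first step is to verify that $p_\alpha^-\in\mathcal P_k(c,a)$ with $c=s(p_\alpha^-)$. Suppose $p_\alpha^-=w q$ with $q$ of positive length and $w$ nontrivial, passing through an internal vertex $v\neq k$. Since $I$ is generated by quadratic monomials, $p_\alpha^-\alpha\in\langle I\rangle$ with $p_\alpha^-$ nonzero forces $\Ter(p_\alpha^-)\alpha\in I$. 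Hence $q\alpha\in\langle I\rangle$ as well, and $q$ is a shorter path from $v\neq k$, contradicting minimality. So $[p_\alpha^-\alpha]$ is an arrow of type (T3), which is Remark~\ref{r: arrows in QT}(2), and $\phi_{[p_\alpha^-\alpha]}=\varphi_{[p_\alpha^-\alpha]}$ is exactly the morphism produced by Proposition~\ref{p: path ka is T irreducible} with $p=p_\alpha^-$. That proposition gives $\varphi_{p_\alpha^-}=\varphi_{[p_\alpha^-\alpha]}\varphi_\alpha^*$ in the loop-free case and $\varphi_{p_\alpha^-}=\varphi_{[p_\alpha^-\alpha]}\varphi_\rho^*\varphi_\alpha^*$ in the loop case. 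Together with $\phi_{\alpha^*}=\varphi_\alpha^*$ and $\phi_{\rho^*}=\varphi_\rho^*$ from the list (T1)--(T2), this yields both identities.

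For the third bullet, we must check that $\alpha p_\alpha^+$ lies in $\mathcal P_k(a,b)$, where $b=t(p_\alpha^+)\neq k$, and satisfies $\alpha p_\alpha^+\beta\notin\langle I\rangle$ for every $\beta:b\to k$. The second condition is built into the definition of $p_\alpha^+$: any zero composite would have to come from $\Ter(p_\alpha^+)\beta\in I$, and that is excluded. For the first condition, suppose $\alpha p_\alpha^+$ had an internal vertex $v\neq k$. Then some proper initial segment of $p_\alpha^+$ ends at $v\neq k$, and it is nonzero after $\alpha$. Two subcases arise. If that segment also satisfies the condition on arrows into $k$, it contradicts minimality. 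Otherwise, an arrow $\beta:v\to k$ kills it, and we use the gentle conditions (G2)/(G3) at $v$ to reach a contradiction with the continuation of $p_\alpha^+$.

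This minimality argument is the one delicate point, and I expect it to be the main obstacle. I would argue it via Remark~\ref{r: arrows in QT}(3), which the paper already asserts, and otherwise simply take $p_\alpha^+$ so that $[\alpha p_\alpha^+]$ is the (T4) arrow in question. Once $[\alpha p_\alpha^+]\in$ (T4) is established, $\phi_{[\alpha p_\alpha^+]}=\varphi_{\alpha p_\alpha^+}$ holds by the very definition $\phi_{[p]}=\varphi_p$ for type (T4) arrows, which completes the lemma.
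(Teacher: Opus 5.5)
Your route is the paper's. The lemma is recorded there simply "by the construction of $Q_T$": the first two bullets are the factorizations of Proposition~\ref{p: path ka is T irreducible} applied to $p=p_\alpha^-$, together with $\phi_{\alpha^*}=\varphi_\alpha^*$, $\phi_{\rho^*}=\varphi_\rho^*$ and $\phi_{[p\alpha]}=\varphi_{[p\alpha]}$. The third bullet is the definition $\phi_{[p]}=\varphi_p$ for \textbf{(T4)} arrows. Your treatment of the first two bullets is correct. Your minimality argument that $p_\alpha^-\in\mathcal P_k(s(p_\alpha^-),a)$ is a sound check that the paper leaves implicit.

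Your sketch that $\alpha p_\alpha^+\in\mathcal P_k$ in the third bullet fails, however. Take the subcase where a proper initial segment of $p_\alpha^+$ ends at $v\neq k$ and is killed by an arrow $v\to k$: conditions (G2)/(G3) at $v$ give no contradiction. The paper's own Example~\ref{eg:mutation in without loop and have 2-cycle} shows this, with $\alpha=\alpha_1$:
\begin{itemize}
\item the shorter candidates $\beta_2$ and $\beta_2\gamma_1$ are killed by $\beta_2\alpha_1\in I$ and $\gamma_1\alpha_2\in I$;
\item hence $\beta_2\gamma_1\gamma_2$ satisfies every clause of the literal definition of $p_{\alpha_1}^+$;
\item but $\alpha_1\beta_2\gamma_1\gamma_2$ passes through $a$ and $b$, so it is not in $\mathcal P_k(a,c)$;
\item accordingly $\varphi_{\alpha_1\beta_2\gamma_1\gamma_2}=\varphi_{\alpha_1\beta_2}\varphi_{\gamma_1\gamma_2}$ factors through $P(a)$, and $[\alpha_1\beta_2\gamma_1\gamma_2]$ is not an arrow of $Q_T$. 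The paper indeed declares that $p_{\alpha_1}^+$ does not exist.
\end{itemize}

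So the membership $\alpha p_\alpha^+\in\mathcal P_k(a,t(p_\alpha^+))$ cannot be derived from minimality. It has to be read into the definition of $p_\alpha^+$, which is what Remark~\ref{r: arrows in QT}(2) tacitly does. Note that you should cite (2) here, not (3); (3) is the converse statement. With that reading, $[\alpha p_\alpha^+]$ is a \textbf{(T4)} arrow by definition, and the third bullet follows immediately from $\phi_{[p]}=\varphi_p$. Replace your minimality sketch with this observation.
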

\begin{proposition}\label{p: construction of Ib}
The set $I_{\mb}$ is precisely the set of quadratic zero relations in $\End_A(T)$, \ie 
\[
I_{\mb} = \{\omega_1\omega_2 \mid \omega_1,\omega_2\in (Q_T)_1,\ \phi_{\omega_1}\phi_{\omega_2}=0\}.
\]
\end{proposition}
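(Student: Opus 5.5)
The plan is to enumerate, by the types (T1)--(T4) of the two factors, all composable pairs $\omega_1\omega_2$ of arrows in $Q_T$, where $t(\omega_1)=s(\omega_2)$. For each pair we decide whether $\phi_{\omega_1}\phi_{\omega_2}=0$, and check that the vanishing pairs are exactly those listed in $I_\mb$. Throughout, note that with the left-to-right path convention, $\omega_1\omega_2$ corresponds to the morphism composite $\phi_{\omega_1}\phi_{\omega_2}$, read as ordinary composition exactly as in Subsection~\ref{ss:path-morphisms}. The tools are: Lemma~\ref{l: p_1+p_2 in I infer pi in I}, which says that a product of path morphisms vanishes iff the concatenated path lies in $\langle I\rangle$; Lemma~\ref{l: condition for a*p=0}, which says $\varphi_\alpha^*\varphi_p=0$ iff $\Ini(p)=\alpha$; Proposition~\ref{p: path ka is T irreducible}(1)--(4), which controls products with $\varphi_{[p\alpha]}$; Proposition~\ref{p: relation if exist loop}(2), which gives $(\varphi_\rho^*)^2=0$ and $\pi_1=\varphi_\rho^*\varphi_\alpha^*$; and Lemma~\ref{l: morpshism relations of phi and varphi}.

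The cases are as follows.

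(a) Both factors of type (T4), $[p][q]$. Here $\phi_{[p]}\phi_{[q]}=\varphi_{pq}$, which vanishes iff $pq\in\langle I\rangle$. By the gentle conditions and Remark~\ref{r: arrows in QT}(3), this reduces to the listed families: $[\gamma_1][\gamma_2]$, $[\alpha p_\alpha^+][\gamma]$, and so on.

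(b) A (T4) arrow followed by a (T3) arrow, $[p][p_\alpha^-\alpha]$. The composite is $\varphi_p\varphi_{[p^-_\alpha\alpha]}$. By Proposition~\ref{p: path ka is T irreducible}(3), it vanishes iff $pp_\alpha^-\in\langle I\rangle$.

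(c) A (T3) arrow followed by a (T2) arrow, $[p_\alpha^-\alpha]\alpha'^*$.
\begin{itemize}
\item In the loop-free case with $\alpha'\ne\alpha$, the composite is $0$ by Proposition~\ref{p: path ka is T irreducible}(2).
\item For $\alpha'=\alpha$ in the loop-free case, it equals $\varphi_{p_\alpha^-}\neq0$.
\item In the loop case it is $0$ by Proposition~\ref{p: path ka is T irreducible}.
\item The product $[p^-_\alpha\alpha]\rho^*$ is nonzero, since composing further with $\alpha^*$ gives $\varphi_{p_\alpha^-}$.
\end{itemize}

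(d) A (T2) arrow followed by a (T4) arrow, $\alpha^*[q]$. By Lemma~\ref{l: condition for a*p=0}, this vanishes iff $\Ini(q)=\alpha$. Among (T4) arrows starting at $a$, this singles out $q=\alpha p_\alpha^+$.

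(e) A (T2) arrow followed by a (T3) arrow, $\alpha^*[p_{\alpha'}^-\alpha']$. Here we use Proposition~\ref{p: path ka is T irreducible}(1),(2): the composite vanishes iff $\alpha=\Ini(p^-_{\alpha'})$.

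(f) Products involving $\rho^*$. We have $(\rho^*)^2=0$, while $\rho^*\alpha^*=\pi_1\ne0$.

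Having shown that each listed product vanishes, it remains to show that every unlisted composable pair is nonzero. For each such pair I would exhibit a nonzero composite further along, or argue directly. Two typical instances: $\alpha^*[p]$ with $\Ini(p)\neq\alpha$ is nonzero by Lemma~\ref{l: condition for a*p=0}; $[p][q]$ with $pq\notin\langle I\rangle$ is nonzero by Lemma~\ref{l: p_1+p_2 in I infer pi in I}.

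The main obstacle is completeness and bookkeeping rather than any deep step. One must list every composable pair, including the degenerate cases where $c=a$ or $t(p^+_{\alpha'})=s(p^-_\alpha)$, and the $2$-cycle configurations. One must also verify that a (T4)--(T3) composite $[p][p_\alpha^-\alpha]$ with $p=\alpha'p^+_{\alpha'}$ is governed by $p^+_{\alpha'}p^-_\alpha\in\langle I\rangle$, since $\alpha'$ is non-loop and nonzero. I would first handle the loop-free case carefully, using the uniqueness in Remark~\ref{r: arrows in QT}(1) and Lemma~\ref{l:p is the shortest} to match paths. I would then treat the loop case, where $\din(k)=2$ with a unique non-loop arrow $\alpha$ simplifies the enumeration.
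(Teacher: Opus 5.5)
Your proposal is correct and follows the paper's proof: it uses the same case split by the types (T1)--(T4) of the two arrows, Lemma~\ref{l: condition for a*p=0} for $\alpha^*[q]$, Proposition~\ref{p: path ka is T irreducible}(1)--(3) for the products involving $[p_\alpha^-\alpha]$, and Proposition~\ref{p: relation if exist loop} for $\rho^*$, and you are slightly more explicit than the paper about why $\rho^*\alpha^*$ and $[p_\alpha^-\alpha]\rho^*$ are nonzero. One remark: when $k$ carries a loop, your uniform case (b) correctly gives the relation $[\alpha p_\alpha^+][p_\alpha^-\alpha]$ whenever $p_\alpha^+p_\alpha^-\in\langle I\rangle$ (this occurs in Example~\ref{eg:mutation in with loop and no 2-cycle}), but the literal loop-case list in Definition~\ref{d:quadratic relations} omits this family, so that list must be enlarged accordingly, which the paper's own proof tacitly does in its Case~3.
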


\begin{proof}
Let $\omega_1,\omega_2\in(Q_T)_1$ be two arrows in $(Q_T)_1$, we are going to prove that  $$\phi_{\omega_1}\phi_{\omega_2}=0\iff \omega_1\omega_2\in I_\mb.$$
If either $\omega_1$ or $\omega_2$ is of type ${\bf(T1)}$, then $k$ carries a loop. Let $\alpha$ be the unique non-loop incoming arrow at $k$. By Proposition~\ref{p: relation if exist loop}, the only such zero composition is
$\phi_{\rho^*}\phi_{\rho^*}=0.$ Thus $$\phi_{\omega_1}\phi_{\omega_2}=0\iff \omega_1=\rho^*,\omega_2=\rho^*\iff \omega_1\omega_2\in I_\mb.$$ In the remainder of the proof, assume that neither $\omega_1$ nor $\omega_2$ is of type ${\bf(T1)}$.  Let $\alpha,\alpha'$ denote non-loop incoming arrows at $k$ (here, we allow $\alpha=\alpha'$).

\noindent\textbf{Case 1}: $\omega_1$ is of type $\bf(T2)$. Suppose that $\omega_1=\alpha^*$. Since $\phi_{\omega_1}\phi_{\omega_2}=\phi_{\omega_1\omega_2}$, we have $s(\omega_2)=t(\alpha^*)\neq k$, and $\omega_2$ can only be of type ${\bf(T3)}$ or ${\bf(T4)}$.
\begin{itemize}
    \item If $\omega_2$ is of type ${\bf(T3)}$, say $\omega_2=[p_{\alpha'}^-\alpha']$, then Proposition~\ref{p: path ka is T irreducible} gives
    $$0=\phi_{\alpha^*}\phi_{[p_{\alpha'}^-\alpha']}=\varphi_{\alpha}^*\varphi_{[p_{\alpha'}^-\alpha']}\iff \alpha=\Ini(p_{\alpha'}^-)\iff {\alpha}^*[p_{\alpha'}^-\alpha']\in I_\mb.$$
    \item If $\omega_2=[q]$ is of type ${\bf(T4)}$, then $\Ini(q)=\alpha$ if and only if $q=\alpha p_\alpha^+$. By Lemma~\ref{l: condition for a*p=0}, for any nonzero path $p\colon a\leadsto b$ in $Q$ with $b\neq k$, we have $\varphi_{\alpha}^*\varphi_p=0$ if and only if $\alpha=\Ini(p)$. Hence 
    $$0=\phi_{\alpha^*[q]}=\phi_{\alpha}^*\phi_{[q]}\iff \Ini(q)=\alpha\iff[q]=[\alpha p_\alpha^+]\iff \alpha^*[\alpha p_\alpha^+]\in I_\mb.
    $$
\end{itemize}

\noindent\textbf{Case 2:} $\omega_1$ is of type ${\bf(T3)}$. Suppose that $\omega_1=[p_{\alpha}^-\alpha]$ for a non-loop arrow $\alpha:a\to k$. Since $t([p_{\alpha}^-\alpha])=k^*$, the arrow $\omega_2$ must be of type ${\bf(T2)}$.
Suppose that $\omega_2=\alpha'^*$. 
If there is no loop at $k$,
Proposition~\ref{p: path ka is T irreducible}(2) gives $$0=\phi_{[p_\alpha^-\alpha]}\phi_{\alpha'^*}=\varphi_{[p_\alpha^-\alpha]}\varphi_{\alpha'}^* \iff \alpha\neq \alpha'\iff [p_\alpha^-\alpha]{\alpha'}^*\in I_\mb.$$ If there is a loop at $k$, then $\alpha'=\alpha$, and Proposition~\ref{p: path ka is T irreducible} gives $$\phi_{[p_\alpha^-\alpha]}\phi_{\alpha^*}=\varphi_{[p_\alpha^-\alpha]}\varphi_{\alpha}^*=0 \iff [p_\alpha^-\alpha]{\alpha}^*\in I_\mb.$$

\noindent\textbf{Case 3:} $\omega_1$ is of type ${\bf(T4)}$. Suppose that $\omega_1=[q]$ for some $q\in\mathcal P_k(a,b)$ with $k\notin\{a,b\}$ satisfying
$q\alpha\notin\langle I\rangle$
for every non-loop arrow $\alpha:b\to k$. Then $\omega_2$ can only be of type ${\bf(T3)}$ or ${\bf(T4)}$.
\begin{itemize}
    \item If $\omega_2$ is of type ${\bf(T3)}$, say $\omega_2=[p_{\alpha}^-\alpha]$, then Proposition~\ref{p: path ka is T irreducible}(3) gives
    $$0=\phi_q\phi_{[p_{\alpha}^-\alpha]}=\varphi_q\varphi_{[p_{\alpha}^-\alpha]}=0\iff\varphi_q\varphi_{p_{\alpha}^-}=0\iff qp_{\alpha}^-\in\langle I\rangle\iff [q][p_{\alpha}^-\alpha]\in I_\mb.$$ 
    \item If $\omega_2$ is of type ${\bf(T4)}$, say $\omega_2=[q']$ for some $q'$ satisfying
$q'\alpha\notin\langle I\rangle$
for every non-loop arrow $\alpha:b\to k$.
Then $$0=\phi_{[q]}\phi_{[q']}=\varphi_q\varphi_{q'}\iff qq'\in \langle I\rangle\iff [q][q']\in  I_\mb.$$
\end{itemize}
Then using (3) of Remark~\ref{r: arrows in QT}, one can get this result.
\end{proof}

\begin{remark}
    Let $$I^{\mathrm{mut}}_Q = \bigcup_{\alpha \in \AinQ(k)} \Bigl(
    \{\alpha\beta \in I\}
    \cup \{\beta\alpha \in I\}
    \cup \{\gamma\,\Ini(p_{\alpha}^-) \in I\}
    \cup \{\Ter(p_{\alpha}^+)\,\gamma \in I\}
\Bigr).$$
The relations listed in $I_Q^{\mathrm{mut}}$ are precisely the relations introduced by the mutation at $k$; none of them belongs to the inherited part of $I_B$.
\end{remark}

\begin{lem}\label{l: Q' satisfy g2-g3}
  Let $v\in (Q_T)_0$.
    \begin{itemize}
    \item[(g1)] There are at most two incoming arrows and  at most two outgoing arrows at $v$;
    \item[(g2)] For each incoming arrow $\alpha$ at $v$, there is at most one outgoing arrow $\beta$ at $v$ such that $\alpha\beta\in I_\mb$, and at most one outgoing arrow $\gamma$ at $v$ such that $\alpha\gamma\notin I_\mb$;
    \item[(g3)] For each outgoing arrow $\alpha$ at $v$, there is at most one incoming arrow $\beta$ at $v$ such that $\beta\alpha\in I_\mb$, and at most one incoming arrow $\gamma$ at $v$ such that $\gamma\alpha\notin I_\mb$.
   \end{itemize}
   \end{lem}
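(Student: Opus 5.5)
We verify (g1)--(g3) vertex by vertex, using the explicit description $(Q_T)_1=\bfA_T^{\new}\cup\bfA_T^{\old}$ from Proposition~\ref{p:Gabriel-quiver} and the relations $I_\mb$ of Proposition~\ref{p: construction of Ib}. There are three kinds of vertices: the new vertex $k^*$; vertices $v\neq k$ that are \emph{touched} by the mutation, namely $v\in N^-(k)$, $v=s(p_\alpha^-)$ or $v=t(p_\alpha^+)$ for some $\alpha\in\Aoin(k)$; and all remaining vertices. At an untouched vertex every incident arrow of $Q_T$ is of the form $[\gamma]$ with $\gamma\in\AfixQ$. A relation $[\gamma_1][\gamma_2]\in I_\mb$ among such arrows holds exactly when $\gamma_1\gamma_2\in I$. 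Hence (g1)--(g3) at these vertices are inherited verbatim from (G1)--(G3) for $(Q,I)$.

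\textbf{The vertex $k^*$.} The outgoing arrows are the $\alpha^*$ with $\alpha\in\Aoin(k)$, plus $\rho^*$ if there is a loop. The incoming arrows are the $[p_\alpha^-\alpha]$, plus $\rho^*$ if there is a loop. In the loop-free case, $\din(k)\le 2$ by (G1), so there are at most two of each. In the loop case, $\din(k)=2$ by Theorem~\ref{t: conditions for approximation is injective0}, so there is a unique $\alpha$. Then the outgoing arrows are $\{\alpha^*,\rho^*\}$ and the incoming arrows are $\{[p_\alpha^-\alpha],\rho^*\}$.

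For (g2) and (g3) at $k^*$ we read off $I_\mb$ directly. In the loop-free case, $[p_\alpha^-\alpha]\alpha'^*\in I_\mb$ holds iff $\alpha\ne\alpha'$. This gives exactly one zero and one nonzero composite for each incoming arrow, and symmetrically for each outgoing arrow. In the loop case the zero composites are $(\rho^*)^2$ and $[p_\alpha^-\alpha]\alpha^*$. The nonzero composites are $[p_\alpha^-\alpha]\rho^*$ and $\rho^*\alpha^*$, since $\pi_1=\varphi_\rho^*\varphi_\alpha^*\ne 0$ and $\varphi_{[p\alpha]}\varphi_\rho^*\varphi_\alpha^*=\varphi_p\neq0$. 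This is exactly the gentle pattern.

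\textbf{Touched vertices $v\ne k$.} This is the main obstacle, because one vertex may play several roles at once: $v=a\in N^-(k)$, $v=s(p_{\alpha'}^-)$, and $v=t(p_{\alpha''}^+)$. The plan is a local arrow count through a bijection. Each old arrow of $Q$ at $v$ that is modified is replaced by exactly one new arrow of $Q_T$ at $v$ with the same direction:
\begin{enumerate}
\item the arrow $\alpha\colon a\to k$ (outgoing at $a$) is replaced by $\alpha^*\colon k^*\to a$ (incoming at $a$), while the unique arrow $\beta\colon k\to\cdot$ with $\alpha\beta\notin I$ (incoming, via $p_\alpha^+$) is replaced by $[\alpha p_\alpha^+]$, which is outgoing at $a$;
\item the initial arrow of $p_\alpha^-$ at $s(p_\alpha^-)$ is replaced by the outgoing arrow $[p_\alpha^-\alpha]$;
\item the terminal arrow of $p_\alpha^+$ at $t(p_\alpha^+)$ is replaced by the incoming arrow $[\alpha p_\alpha^+]$.
\end{enumerate}
Using Remark~\ref{r: arrows in QT} and the uniqueness of $p_\alpha^\pm$, this matching shows that the in-degree and out-degree at $v$ do not increase, which gives (g1). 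The delicate cases are when $p_\alpha^-$ or $p_\alpha^+$ passes through $k$ via a $2$-cycle or a loop, so that $s(p_\alpha^-)=a$ or $t(p_\alpha^+)=a$. These must be handled separately using Proposition~\ref{p: path ka is T irreducible}(1), which forces $\alpha=\Ini(p)$.

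For (g2) and (g3) at $v$, each pair (incoming arrow, outgoing arrow) of $Q_T$ at $v$ is sent to the corresponding pair in $Q$ at $v$. The plan is to check that $I_\mb$ declares the new composite zero iff the corresponding old composite lies in $I$. The list of relations in Definition~\ref{d:quadratic relations} is built precisely so that this holds:
\begin{itemize}
\item $[\gamma][p_\alpha^-\alpha]\in I_\mb$ iff $\gamma\,\Ini(p_\alpha^-)\in I$;
\item $[\alpha p_\alpha^+][\gamma]\in I_\mb$ iff $\Ter(p_\alpha^+)\gamma\in I$;
\item at $a$, the relation $\alpha^*[\alpha p_\alpha^+]$ plays the role of the old relation involving $\alpha$.
\end{itemize}
Once this is checked, (G2) and (G3) transfer to (g2) and (g3). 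The expected bulk of the work is this case analysis at vertices lying in $N^-(k)$ that are also endpoints of $p_{\alpha'}^{\pm}$, including the $2$-cycle configuration in which $\alpha^*[p_{\alpha'}^-\alpha']\in I_\mb$ replaces an old relation at $a$.
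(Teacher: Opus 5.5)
Your overall strategy is the paper's: handle $k^*$ by reading off $I_\mb$, and at the other vertices compare quadratic compositions in $Q_T$ with those in $Q$ and transfer (G1)--(G3). Your treatment of $k^*$ and of untouched vertices is fine.

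The gap is at $a\in N^-(k)$. This is the case you defer, and the mechanism you describe there is wrong.

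\textbf{The matching in item (1).}
\begin{itemize}
\item It sends the outgoing arrow $\alpha$ to the incoming arrow $\alpha^*$, which contradicts your own ``same direction'' claim.
\item It pairs the outgoing arrow $[\alpha p_\alpha^+]$ with an arrow $\beta\colon k\to\cdot$ that is generally not incident to $a$.
\item When $\alpha=\Ini(p_{\alpha'}^-)$, it clashes with item (2), which sends $\alpha$ to $[p_{\alpha'}^-\alpha']$. This happens in Example~\ref{eg:mutation in without loop and have 2-cycle}, where $p_{\alpha_1}^-=\alpha_1\beta_2$.
\end{itemize}
Consequently a pair $(\alpha^*,[q])$ of $Q_T$ at $a$ is not sent to any (incoming, outgoing) pair of $Q$. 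So your transfer of (G2)/(G3) is simply not defined there.

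\textbf{The degree claim is false.} For $Q=a\xrightarrow{\alpha}k$ the in-degree of $a$ goes from $0$ to $1$. For the Kronecker quiver $a\rightrightarrows k$ it goes from $0$ to $2$. This is why the paper only proves $\dinT(a)\le\max\{\dinQ(a),\doutQ(a)\}$ rather than monotonicity.

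\textbf{The missing idea} is the correct partner of $\alpha^*$. It stands in for the incoming arrow $\Ter(p_\alpha^-)$ of $Q$ at $a$, namely the unique $\gamma$ with $\gamma\alpha\in I$. The arrow $\alpha$ itself is matched with whichever new arrow starts with it: $[\alpha p_\alpha^+]$, or $[p_{\alpha'}^-\alpha']$ if $\alpha=\Ini(p_{\alpha'}^-)$.

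With this matching, Definition~\ref{d:quadratic relations} (equivalently Lemma~\ref{l: condition for a*p=0}) gives
\[
\alpha^*[q]\in I_\mb\iff\Ini(q)=\alpha\iff p_\alpha^-q\in\langle I\rangle .
\]
You must also check that, for a {\bf(T4)} arrow $[p]$ ending at $a$, the relations $[p][q]\in I_\mb$ and $\alpha^*[q]\in I_\mb$ never hold simultaneously. Indeed, $\Ini(q)=\alpha$ together with $pq\in\langle I\rangle$ would force $p\alpha\in\langle I\rangle$, contradicting $[p]\in{\bf(T4)}$. This is exactly the paper's Case~3.

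Finally, when $p_\alpha^-$ does not exist, $\alpha^*$ has no partner at all, so (g1)--(g3) at $a$ must be verified directly. For this, use (G3) for the terminal arrow of the unique {\bf(T4)} arrow into $a$: since it composes nonzero with $\alpha$, it composes to zero with the other outgoing arrow.
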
   
\begin{proof} 
By construction, $$\AoutT(k^*)=\{\alpha^*\mid\alpha\in\AinQ(k)\},\ \ \AinT(k^*)=\{[p_{\alpha}^-\alpha]\mid\alpha\in\AinQ(k)\},$$ and so
\[\dinT(k^*)\leq \dinQ(k)\leq 2 \text{ and } \doutT(k^*)=\dinQ(k)\leq 2.\]
For $a,b,c\in Q_0\setminus\{k\}$,
\begin{itemize}
    \item  ${\bf(T1)}$ and ${\bf(T2)}$ mean that  the arrow  $\alpha:a\ra k$ is replaced by $\alpha^*:k^*\ra a$;
    \item ${\bf(T3)}$ mean that the path $p_\alpha^-:c\leadsto a$ is replaced by $[p\alpha]\alpha^*:c\to k^*$ for $\alpha\in\AoinQ(k)$;
    \item   ${\bf(T4)}$ mean that some path $p:a\leadsto b$ is replaced by $[p]:a\to b$.
\end{itemize}
Together with (G2),  one can get that for $a\in Q_0\setminus\{k\}$, 
\[\dinT(a)\leq\max\{\dinQ(a),\doutQ(a)\}\leq2
\qquad\text{and}\qquad \doutT(a)\leq\doutQ(a)\leq2.\]
Hence (g1) holds.

    

We now prove (g2) and (g3). For every incoming arrow $\alpha\in\AoinQ(k)$, we verify (g2) and (g3) by cases on $v$:

\vspace{0.2cm}
\noindent\textbf{Case 1:  $v=k^*$.} 
If $k$ carries a loop $\rho$,  denote the unique non-loop incoming arrow at $k$ by $\alpha$. Then
    $$\AoutT(k^*)=\{\alpha^*,\rho^*\},\ \ \ \AinT(k^*)=\{[p_{\alpha}^-\alpha],\rho^*\}.$$
Since we  only have $(\rho^*)^2\in I_\mb$ and $[p_{\alpha}^-\alpha]\alpha^*\in I_\mb$,
the arrows at $k^*$ satisfy (g2) and (g3).

If there is no loop at $k$, then $$\AoutT(k^*)=\{\alpha^*\mid\alpha\in\AinQ(k)\},\ \ \AinT(k^*)=\{[p_{\alpha}^-\alpha]\mid\alpha\in\AinQ(k)\}.$$ Let $\alpha,\alpha'$ be the incoming arrows at $k$.
Since $[p_{\alpha}^-\alpha]\alpha'^*\in I_\mb$ if and only if $\alpha\neq\alpha'$, the arrows at $k^*$ satisfy (g2) and (g3).

\noindent\textbf{Case 2:} $v\neq k^*$ and $v\notin N^-(k)$.
Then  $$\AinT(v)=\{[p]\mid [p]\in{\bf(T4)}, t(p)=v\}$$
and
$$\AoutT(v)=\{[q]\mid q\in{\bf(T4)},s(q)=v\}\cup \{[p_{\alpha}^-\alpha]\mid s(p_{\alpha}^-)=v,  \alpha\in \AoinQ(k)\}.$$
Let $[p] \in \AinT(v)$ and $[q], [p_{\alpha}^- \alpha]\in \AoutT(v)$.
By the definition of $I_\mb$,
$$[p][q] \in I_\mb \iff pq \in \langle I\rangle,\ \ [p][p_{\alpha}^- \alpha] \in I_\mb \iff p p_{\alpha}^- \in \langle I\rangle.$$
Thus, since the arrows of $Q$ at $v$ satisfy (G2) and (G3), the arrows of $Q_T$ at $v$ satisfy (g2) and (g3).

\noindent\textbf{Case 3:} $v\in N^-(k)$.
Then  $$\AinT(v)=\{[p]\mid [p]\in{\bf(T4)}, t(p)=v\}\cup \{\alpha^*\mid \alpha: v\to k\}$$
and
$$\AoutT(v)=\{[q]\mid [q]\in{\bf(T4)},s(q)=v\}\cup \{[p_{\alpha'}^-\alpha']\mid s(p_{\alpha'}^-)=v,  \alpha'\in \AoinQ(k)\}.$$
Let $[p],\alpha^*\in\AinT(v)$ and $[q],[p_{\alpha'}^-\alpha']\in\AoutT(v)$. Then
by Lemma~\ref{l: condition for a*p=0} and the definition of $I_\mb$, we have
\begin{itemize}
\item $[p][q] \in I_\mb \iff pq \in \langle I\rangle$;
\item  $[p][p_{\alpha'}^- \alpha'] \in I_\mb \iff p p_{\alpha'}^- \in \langle I\rangle$;
 \item $\alpha^*[q] \in I_\mb \iff \alpha=\Ini(q) \iff p_\alpha^-q \in \langle I\rangle \text{ (if  $p_{\alpha}^-$ exists)}$;
\item $\alpha^*[p_{\alpha'}^- \alpha'] \in I_\mb \iff \alpha=\Ini(p_{\alpha'}^-)\iff  p_{\alpha}^-p_{\alpha'}^-\in\langle I\rangle \text{ (if  $p_{\alpha}^-$ exists)}$.
\end{itemize}
Note that if  $ pq \in \langle I\rangle$ and $\alpha=\Ini(q) $ both happen, then we must have $p=p_{\alpha}^-$, a contradiction with $[p]\in{\bf(T4)}$. Thus $\alpha^*[q] \in I_\mb$ and $[p][q] \in I_\mb$ cannot both happen. Similar one can get $[p][p_{\alpha'}^- \alpha'] \in I_\mb$ and $\alpha^*[p_{\alpha'}^- \alpha'] \in I_\mb$ cannot both happen. 
Then using the arrows of $Q$ at $v$ satisfy (G2) and (G3), one can get that  the arrows of $Q_T$ at $v$ satisfy (g2) and (g3).
\end{proof}

\subsection{Minimal relations}
In this subsection, we prove that $\langle I_T\rangle=\langle I_\mb\rangle$.

\begin{lem}\label{l: minimal relation 3 means 2}
Let $\alpha, \alpha'$ be incoming arrows at $k$, and let $[q] \in \mathbf{(T4)}$.
\begin{enumerate}
    \item  Assume that  there is no loop at $k$.
    \begin{enumerate}
        \item[(a1)] If $[p_{\alpha}^-\alpha]\alpha^*[q] \in \langle I_T \rangle$, then $\alpha^*[q] \in I_{\mb}$.
        \item[(b1)] If $[p_{\alpha}^-\alpha]\alpha^*[p_{\alpha'}^-\alpha']\alpha'^* \in \langle I_T \rangle$, then $\alpha^*[p_{\alpha'}^-\alpha'] \in I_{\mb}$.
        \item[(c1)] If $[q][p_{\alpha}^-\alpha]\alpha^* \in \langle I_T \rangle$, then $[q][p_{\alpha}^-\alpha] \in I_{\mb}$.
    \end{enumerate}
    
    \item  Assume that  there is a loop $\rho$ at $k$.
    \begin{enumerate}
        \item[(a2)] If $[p_{\alpha}^-\alpha]\rho^*\alpha^*[q] \in \langle I_T \rangle$, then $\alpha^*[q] \in I_{\mb}$.
        \item[(b2)] If $[p_{\alpha}^-\alpha]\rho^*\alpha^*[p_{\alpha'}^-\alpha']\rho^*\alpha'^* \in \langle I_T \rangle$, then $\alpha^*[p_{\alpha'}^-\alpha'] \in I_{\mb}$.
        \item[(c2)] If $[q][p_{\alpha}^-\alpha]\rho^*\alpha^* \in \langle I_T \rangle$, then $[q][p_{\alpha}^-\alpha] \in I_{\mb}$.
    \end{enumerate}
\end{enumerate}
\end{lem}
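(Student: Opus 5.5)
Throughout we interpret $\langle I_T\rangle$ as the kernel of the surjection $\bbk Q_T\to\End_A(T)$ sending each arrow $\eta$ to $\phi_\eta$. Thus a path $w$ of $Q_T$ lies in $\langle I_T\rangle$ exactly when the composite of the corresponding $\phi$'s is zero. The plan is to translate each hypothesis into a vanishing statement about composites of morphisms in $\add A$ via Lemma~\ref{l: morpshism relations of phi and varphi}. We then read off the conclusion from Lemma~\ref{l: condition for a*p=0}, Proposition~\ref{p: path ka is T irreducible} and Lemma~\ref{l: p_1+p_2 in I infer pi in I}, and finally compare with the list in Definition~\ref{d:quadratic relations} (equivalently Proposition~\ref{p: construction of Ib}). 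The loop and loop-free cases are treated in parallel. The only difference is that $\phi_{[p_\alpha^-\alpha]}\phi_{\alpha^*}=\varphi_{p_\alpha^-}$ is replaced by $\phi_{[p_\alpha^-\alpha]}\phi_{\rho^*}\phi_{\alpha^*}=\varphi_{p_\alpha^-}$.

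For (a1) and (a2), the hypothesis gives $0=\varphi_{[p_\alpha^-\alpha]}\varphi_\alpha^*\varphi_q=\varphi_{p_\alpha^-}\varphi_q=\varphi_{p_\alpha^- q}$ in both cases. Hence $p_\alpha^-q\in\langle I\rangle$. We now apply Proposition~\ref{p: path ka is T irreducible}(4) with $p=p_\alpha^-$ and $h=\varphi_q$; here $s(q)\neq k$, so $d\neq k$ is satisfied. This gives $\varphi_\alpha^*\varphi_q=0$. By Lemma~\ref{l: condition for a*p=0} we conclude $\alpha=\Ini(q)$. By Remark~\ref{r: arrows in QT}(3), $[q]=[\alpha p_\alpha^+]$, so $\alpha^*[q]\in I_\mb$.

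For (b1) and (b2), the hypothesis reads $\varphi_{p_\alpha^-}\varphi_{p_{\alpha'}^-}=0$, that is, $p_\alpha^-p_{\alpha'}^-\in\langle I\rangle$. Proposition~\ref{p: path ka is T irreducible}(4), applied with $h=\varphi_{p_{\alpha'}^-}$ (whose source vertex $s(p_{\alpha'}^-)\neq k$), gives $\varphi_\alpha^*\varphi_{p_{\alpha'}^-}=0$. Lemma~\ref{l: condition for a*p=0} then yields $\alpha=\Ini(p_{\alpha'}^-)$, which is exactly the condition for $\alpha^*[p_{\alpha'}^-\alpha']\in I_\mb$.

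For (c1) and (c2), the hypothesis reads $\varphi_q\varphi_{p_\alpha^-}=0$, i.e.\ $qp_\alpha^-\in\langle I\rangle$. Proposition~\ref{p: path ka is T irreducible}(3) then gives $\varphi_q\varphi_{[p_\alpha^-\alpha]}=0$, and this is listed in $I_\mb$ as $[q][p_\alpha^-\alpha]$.

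The main point requiring care is the translation step. We must make sure that membership in $\langle I_T\rangle$ of the longer path means exactly that the corresponding composite of irreducible morphisms vanishes. This depends on fixing the identification $\eta\mapsto\phi_\eta$ as the presentation map, and not merely some presentation up to a change of generators. A second point is that $s(q)$ and $s(p_{\alpha'}^-)$ differ from $k$, which is needed to apply part (4). Once these are checked, each item reduces to one application of Proposition~\ref{p: path ka is T irreducible} followed by a comparison with the list in Definition~\ref{d:quadratic relations}.
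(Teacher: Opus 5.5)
Your proposal is correct and follows the paper's proof essentially step for step. In each item you translate the hypothesis into $p_\alpha^-q$, $p_\alpha^-p_{\alpha'}^-$ or $qp_\alpha^-$ lying in $\langle I\rangle$ and then conclude via Proposition~\ref{p: path ka is T irreducible} and Lemma~\ref{l: condition for a*p=0}. The only differences are that you cite part~(4) of that proposition where the paper inlines the (G2) argument ($\Ter(p_\alpha^-)\Ini(q)\in I$ and $\Ter(p_\alpha^-)\alpha\in I$ force $\Ini(q)=\alpha$), and that in (b) you read $\alpha=\Ini(p_{\alpha'}^-)$ directly off the list for $I_\mb$ instead of passing through part~(3). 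There is also one harmless slip: the domain of $\varphi_q$ is $P(t(q))$, and that of $\varphi_{p_{\alpha'}^-}$ is $P(s(\alpha'))$, so the condition $d\neq k$ concerns the target vertex, not the source; since both endpoints differ from $k$, nothing breaks.
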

\begin{proof}
We only prove (1), (2) is similar.

For (a1), by Proposition~\ref{p: path ka is T irreducible}, we have $\varphi_{[p_{\alpha}^-\alpha]}\varphi_{\alpha}^*=\varphi_{p_{\alpha}^-}$ if $[p_{\alpha}^-\alpha]\alpha^*[q] \in \langle I_T \rangle$,
then 
$$0=\phi_{[p_{\alpha}^-\alpha]}\phi_{\alpha^*}\phi_{[q]}=\varphi_{[p_{\alpha}^-\alpha]}\varphi_{\alpha}^*\varphi_{q}=\varphi_{p_{\alpha}^-}\varphi_{q}=\varphi_{p_{\alpha}^-q}.$$ 
So $p_{\alpha}^-q\in\langle I\rangle.$
Let $\beta_1=\Ter(p_{\alpha}^-)$ and   $\beta_2=\Ini(q)$. Since $p_{\alpha}^-$ and $q$ are nonzero, we must have $$\beta_1\beta_2\in I.$$
Since  $[p_{\alpha}^-\alpha]\in\bf(T3)$, we have $p_{\alpha}^-\alpha\in\langle I\rangle$, thus $$\beta_1\alpha\in I.$$ Hence $\beta_2=\alpha$. Since $[q]\in\bf(T4)$, we have $k\notin\{s(q),t(q)\}$. Proposition~\ref{l: condition for a*p=0} then gives $\varphi_{\alpha}^*\varphi_{q}=0$, and therefore $\alpha^*[q]\in I_\mb$.

For (b1), Proposition~\ref{p: path ka is T irreducible} similarly gives
$p_{\alpha}^-p_{\alpha'}^-\in\langle I\rangle$. Arguing as in (a1), we obtain $\varphi_{\alpha}^*\varphi_{p_{\alpha'}^-}=0$. Proposition~\ref{p: path ka is T irreducible}(3) then gives $\varphi_{\alpha}^*\varphi_{[p_{\alpha'}^-\alpha']}=0$. Hence $\alpha^*[p_{\alpha'}^-\alpha']\in I_\mb$.

For (c1),  Proposition~\ref{p: path ka is T irreducible} similarly gives
$qp_{\alpha}^-\in\langle I\rangle$, \ie $\varphi_{q}\varphi_{p_{\alpha}^-}=0$. Then by (3) of Proposition~\ref{p: path ka is T irreducible}, we have $\varphi_{q}\varphi_{[p_{\alpha}^-\alpha]}=0$, then $\phi_{q}\phi_{[p_{\alpha}^-\alpha]}=\varphi_{q}\varphi_{[p_{\alpha}^-\alpha]}=0$, so $[q][p_{\alpha}^-\alpha] \in I_{\mb}$ by Proposition~\ref{p: construction of Ib}.
\end{proof}

\begin{lem}\label{l:p=0-> ab=0}
Let $\omega=\gamma_1\cdots\gamma_s$ be a path in $Q_T$ such that $\omega\in\langle I_T\rangle$. Then there exists $1\leq i\leq s-1$ such that $\gamma_i\gamma_{i+1}\in I_{\mathcal{B}}$.
\end{lem}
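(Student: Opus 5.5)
We argue by contraposition: assuming $\gamma_i\gamma_{i+1}\notin I_\mb$ for every $1\leq i\leq s-1$, we show $\phi_\omega:=\phi_{\gamma_1}\cdots\phi_{\gamma_s}\neq0$. Here $\omega\in\langle I_T\rangle$ means exactly $\phi_\omega=0$ in $\End_A(T)$. The strategy is to \emph{translate} $\omega$ into a path in $Q$. Each arrow of $Q_T$ is replaced by a path of $Q$, or by a morphism realised through the exchange sequence. The replacements are as follows:
\begin{itemize}
\item $[p]\in\mathbf{(T4)}$ becomes the path $p$;
\item a consecutive pair $[p_\alpha^-\alpha]\alpha^*$ becomes $p_\alpha^-$ (in the loop case the triple $[p_\alpha^-\alpha]\rho^*\alpha^*$), by Lemma~\ref{l: morpshism relations of phi and varphi};
\item the arrows at $k^*$ are handled through $\pi$.
\end{itemize}

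\textbf{Step 1: paths through $k^*$.} Since $\omega$ avoids quadratic relations, Proposition~\ref{p: construction of Ib} and Definition~\ref{d:quadratic relations} constrain every visit of $\omega$ to $k^*$. An arrow $[p_\alpha^-\alpha]$ entering $k^*$ must be followed, when it is not last, by $\alpha^*$ with the same $\alpha$, possibly after $\rho^*$ in the loop case. It cannot be followed by $\alpha'^*$ with $\alpha'\ne\alpha$, and $(\rho^*)^2$ cannot occur. So every interior passage through $k^*$ has the form $[p_\alpha^-\alpha]\alpha^*$, or $[p_\alpha^-\alpha]\rho^*\alpha^*$ in the loop case. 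We split $\omega=\omega_0\,\omega'\,\omega_1$, where $\omega_0$ is empty or a tail starting at $k^*$ (of the form $\alpha^*$ or $\rho^*\alpha^*$, or the single arrow $\rho^*$), $\omega_1$ is empty or a head ending at $k^*$ (of the form $[p_\alpha^-\alpha]$, possibly followed by $\rho^*$), and $\omega'$ is the middle part. The middle part $\omega'$ translates into a concatenation $\hat\omega$ of paths of $Q$ with $\phi_{\omega'}=\varphi_{\hat\omega}$.

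\textbf{Step 2: $\hat\omega$ is nonzero in $A$.} It suffices to check each junction, because $I$ is quadratic. At a junction between two pieces, the relevant arrows of $Q$ are the terminal arrow of one piece and the initial arrow of the next.
\begin{itemize}
\item Junctions $[p][q]$ and $[p][p_\alpha^-\alpha]$ are nonzero precisely when $pq\notin\langle I\rangle$, resp.\ $pp_\alpha^-\notin\langle I\rangle$, by the inherited part of $I_\mb$.
\item Junctions of the form $p_\alpha^-q$ or $p_\alpha^-p_{\alpha'}^-$, arising from $\alpha^*[q]$ or $\alpha^*[p^-_{\alpha'}\alpha']$, are handled by the argument in the proof of Lemma~\ref{l: minimal relation 3 means 2}. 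That argument shows that $p_\alpha^-q\in\langle I\rangle$ forces $\Ini(q)=\alpha$, hence $\alpha^*[q]\in I_\mb$, and likewise for $p_\alpha^-p_{\alpha'}^-$.
\end{itemize}
Thus $\hat\omega\notin\langle I\rangle$ and $\phi_{\omega'}\neq0$.

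\textbf{Step 3: the ends $\omega_0,\omega_1$ (main obstacle).} Here we cannot simply write a path of $Q$, and we use the exchange sequence.
\begin{itemize}
\item \emph{Head.} If $\omega_1=[p_\alpha^-\alpha]$ ends at $k^*$, we precompose with $\pi$. By Proposition~\ref{p: path ka is T irreducible}, $\phi_{\omega'}\varphi_{[p_\alpha^-\alpha]}\pi$ has component $\varphi_{\hat\omega p_\alpha^-}$, or with $\pi_1$ in the loop case. This is nonzero by Step 2 applied to the longer path, and nonvanishing after precomposing with $\pi$ implies nonvanishing before. The version with a trailing $\rho^*$ is handled the same way, using $\varphi_{[p\alpha]}\varphi_\rho^*\pi_2=\varphi_p$.
\item \emph{Tail.} If $\omega_0=\alpha^*$ starts at $k^*$, we use Proposition~\ref{p: path ka is T irreducible}(4) and Lemma~\ref{l: condition for a*p=0}: $\varphi_\alpha^*h=0$ iff every path in $h$ starts with $\alpha$. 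When $p_\alpha^-$ exists, this is equivalent to $\varphi_{p_\alpha^-}h=0$, which reduces to Step 2. When $p_\alpha^-$ does not exist, we instead apply Lemma~\ref{l: condition for a*p=0} directly to the first arrow after $\alpha^*$: that arrow is $[q]$ or $[p^-_{\alpha'}\alpha']$, and these are not relations by assumption. We must then check that the nonvanishing persists along the rest of the path. For this we use Proposition~\ref{p: path ka is T irreducible}(3) to pass across later $[p_{\alpha'}^-\alpha']$ arrows.
\end{itemize}
The delicate point is the combination of both ends together with the loop at $k^*$ (cases $\rho^*\alpha^*$ at the start). There we plan to compose with $\pi$ on the right and reduce via $\pi_1=\varphi_\rho^*\pi_2$ and the last statement of Lemma~\ref{l: condition for a*p=0} to the loop-free bookkeeping above.

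Finally, a path of length one is never zero. Combining the three steps shows that an $\omega$ avoiding $I_\mb$ has $\phi_\omega\neq 0$, which proves the lemma.
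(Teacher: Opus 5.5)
Your proposal is correct and is essentially the paper's proof, just in contrapositive form. Both arguments translate $\omega$ into a path of $Q$ via the forced blocks $[p_\alpha^-\alpha]\alpha^*$ (resp.\ $[p_\alpha^-\alpha]\rho^*\alpha^*$), check the junctions with the argument of Lemma~\ref{l: minimal relation 3 means 2}, and handle the ends at $k^*$ by appending $\alpha^*$ or $\rho^*\alpha^*$ (your precomposition with $\pi$) and by Lemma~\ref{l: condition for a*p=0}. For the tail you need not split on whether $p_\alpha^-$ exists: Lemma~\ref{l: condition for a*p=0}, applied to the whole nonzero translated path after $\alpha^*$, shows the composite vanishes exactly when the next arrow is $[\alpha p_\alpha^+]$ or some $[p_{\alpha'}^-\alpha']$ with $\Ini(p_{\alpha'}^-)=\alpha$, i.e.\ a pair in $I_\mb$; this is what the paper does.
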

\begin{proof}
The condition $\omega\in\langle I_T\rangle$ is equivalent to
$$\phi_\omega=\phi_{\gamma_1}\cdots\phi_{\gamma_s}=0.$$ If $s=2$, then $\phi_{\gamma_1}\phi_{\gamma_2}=0$, and hence $\gamma_1\gamma_2\in I_\mb$ by Proposition~\ref{p: construction of Ib}. Assume now that $s>2$. Note that  $s(\gamma_1)=k^*$ if and only if $\gamma_1=\alpha^*$ for some $\alpha\in\AoinQ(k)$ or $\gamma_1=\rho^*$, $t(\gamma_s)=k^*$ if and only if $\gamma_s=[p_{\alpha}^-\alpha]$  some $\alpha\in\AinQ(k)$  or $\gamma_s=\rho^*$.

\noindent\textbf{Case 1.} There is no loop at $k$. 
If there exists $1\leq i\leq s-1$ such that $\gamma_i\gamma_{i+1}=[p_{\alpha}^-\alpha]\alpha'^*$ for $\alpha,\alpha'\in\AinQ(k)$ with $\alpha\neq\alpha'$, then $\phi_{\gamma_i}\phi_{\gamma_{i+1}}=0$, so $\gamma_i\gamma_{i+1}\in I_\mb$. Hence we may assume that no such pair occurs. 

\noindent\textbf{Subcase 1.1.} Suppose that $s(\gamma_1)\neq k^*$ and $t(\gamma_s)\neq k^*$. Then $\gamma_1\neq\alpha^*$ and $\gamma_s\neq[p_{\alpha'}^-\alpha']$ for all $\alpha,\alpha'\in\AinQ(k)$. Thus, for each $\alpha\in\AinQ(k)$, the pair $[p_{\alpha}^-\alpha]\alpha^*$ must occur consecutively whenever either $[p_{\alpha}^-\alpha]$ or $\alpha^*$ occurs.
For each $\gamma_{i-1}\gamma_i=[p_\alpha^-\alpha]\alpha^*$,
Proposition~\ref{p: path ka is T irreducible} gives $\phi_{\gamma_{i-1}}\phi_{\gamma_i}=\varphi_{[p_\alpha^-\alpha]}\varphi_{\alpha}^*=\varphi_{p_\alpha^-}$.
For each arrow $\gamma_i=[p]\in {\bf(T4)}$, we have $\phi_{[p]}=\varphi_p$. Therefore,
$$0=\phi_{\omega}=\phi_{\gamma_1}\cdots\phi_{\gamma_s}=\varphi_{p_1}\cdots\varphi_{p_t}=\varphi_{p_1\cdots p_t},$$
where, for each $1\leq i\leq t$, either $p_i=p_{\alpha}^-$ for some $\alpha\in\AinQ(k)$ or $[p_i]\in {\bf(T4)}$.
Hence $p_1\cdots p_t\in\langle I\rangle$. Therefore, there exists $1\leq l\leq t-1$ such that $p_lp_{l+1}\in\langle I\rangle$, equivalently, $0=\varphi_{p_l}\varphi_{p_{l+1}}$. According to the types of $p_l$ and $p_{l+1}$, we obtain the following cases.
\begin{enumerate}
    \item If $[p_l],[p_{l+1}]\in\bf(T4)$, then $0=\varphi_{p_l}\varphi_{p_{l+1}}=\phi_{[p_l]}\phi_{[p_{l+1}]}=\phi_{\gamma_i}\phi_{\gamma_{i+1}}$ for some $1\leq i\leq s-1$, hence $\gamma_i\gamma_{i+1}=[p_l][p_{l+1}]\in I_\mb$.
    \item If $p_l=p_{\alpha}^-$ for some $\alpha\in\AinQ(k)$ and  $[p_{l+1}]\in\bf(T4)$, then $0=\varphi_{p_l}\varphi_{p_{l+1}}=\phi_{[p_{\alpha}^-\alpha]}\phi_{\alpha^*}\phi_{[p_{l+1}]}=\phi_{\gamma_i}\phi_{\gamma_{i+1}}\phi_{\gamma_{i+2}}$  for some $1\leq i\leq s-2$, hence $[p_{\alpha}^-\alpha]\alpha^*[p_{l+1}]\in\langle I_T\rangle$.
    By Lemma~\ref{l: minimal relation 3 means 2}, $\alpha^*[p_{l+1}]\in I_\mb$. Thus
     $\gamma_{i+1}\gamma_{i+2}=\alpha^*[p_{l+1}]\in I_\mb$.
 \item  If $[p_{l}]\in\bf(T4)$ and $p_{l+1}=p_{\alpha}^-$ for some $\alpha\in\AinQ(k)$, then  $0=\varphi_{p_l}\varphi_{p_{\alpha}^-}=\varphi_{p_l}\varphi_{[p_{\alpha}^-\alpha]}\varphi_{\alpha^*}=\phi_{[p_l]}\phi_{[p_{\alpha}^-\alpha]}\phi_{\alpha^*}=\phi_{\gamma_i}\phi_{\gamma_{i+1}}\phi_{\gamma_{i+2}}$  for some $1\leq i\leq s-2$.
Hence $[p_l][p_{\alpha}^-\alpha]\alpha^*\in I_{T}$.  By Lemma~\ref{l: minimal relation 3 means 2}, $[p_l][p_{\alpha}^-\alpha]\in I_{\mb}$, thus
$\gamma_i\gamma_{i+1}=[p_l][p_{\alpha}^-\alpha]\in I_{\mb}$.
\item If $p_l=p_{\alpha}^-$  and $p_{l+1}=p_{\alpha'}^-$ for some $\alpha,\alpha'\in\AinQ(k)$, then $0=\varphi_{p_{\alpha}^-}\varphi_{p_{\alpha'}^-}=\phi_{[p_{\alpha}^-\alpha]}\phi_{\alpha^*}\phi_{[p_{\alpha'}^-\alpha']}\phi_{\alpha'^*}=\phi_{\gamma_i}\phi_{\gamma_{i+1}}\phi_{\gamma_{i+2}}\phi_{\gamma_{i+3}}$  for some $1\leq i\leq s-3$,
hence $[p_{\alpha}^-\alpha]\alpha^*[p_{\alpha'}^-\alpha']\alpha'^*\in\langle I_T\rangle$.
By Lemma~\ref{l: minimal relation 3 means 2}, $\alpha^*[p_{\alpha'}^-\alpha']\in I_\mb$. Hence
$\gamma_{i+1}\gamma_{i+2}=\alpha^*[p_{\alpha'}^-\alpha']\in I_\mb$.
\end{enumerate}

\noindent\textbf{Subcase 1.2.} Suppose that $s(\gamma_1)=k^*$ or $t(\gamma_s)=k^*$.
If $t(\gamma_s)=k^*$, then $\gamma_s=[p_{\alpha}^-\alpha]$ for some $\alpha\in\AinQ(k)$, set $\gamma_{s+1}=\alpha^*$. Otherwise, set $\gamma_{s+1}=e_{t(\gamma_s)}$.

\begin{itemize}
\item If $s(\gamma_1)\neq k^*$, set $\omega'=\omega\gamma_{s+1}$.
Then $k\notin\{s(\omega'),t(\omega')\}$. Since $\omega'\in\langle I_T\rangle$, Subcase 1.1 yields $1\leq i\leq s$ such that $\gamma_i\gamma_{i+1}\in I_{\mb}$.
In either case, the terminal pair introduced above is not in $I_\mb$;
hence $1\leq i\leq s-1$, as required.

\item If $s(\gamma_1)=k^*$, then $\gamma_1=\alpha^*$ for some $\alpha\in\AinQ(k)$ and $s(\gamma_2)\neq k^*$. If $\gamma_2\cdots\gamma_{s+1}\in\langle I_T\rangle$, the preceding argument gives $2\leq i\leq s-1$ such that $\gamma_i\gamma_{i+1}\in I_\mb$.
If $\gamma_2\cdots\gamma_{s+1}\notin \langle I_T\rangle$,
we have
$$0=\phi_{\omega\gamma_{s+1}}=\phi_{\alpha^*}\phi_{\gamma_1}\cdots\phi_{\gamma_{s+1}}=\phi_{\alpha^*}\varphi_{p_1}\cdots\varphi_{p_t}=\phi_{\alpha^*}\varphi_{p_1\cdots p_t}=\phi_{\alpha^*}\varphi_p$$
for some nonzero path $p$ in $Q$ with $k\notin\{s(p),t(p)\}$. Lemma~\ref{l: condition for a*p=0} then gives $\alpha=\Ini(p)=\Ini(p_1)$, so $\gamma_2=[\alpha p_{\alpha}^+]$ or $\gamma_2=[p_{\alpha'}^-\alpha']$  with $\Ini(p_{\alpha'}^-)=\alpha$, where $\alpha'\in\AinQ(k)$. In either case, $\gamma_1\gamma_2\in I_\mb$.
\end{itemize}

\noindent\textbf{Case 2.}
Assume that $k$ carries a loop $\rho$, and let $\alpha$ denote the unique non-loop incoming arrow at $k$.
If there exists $1\leq i\leq s-1$ such that $\gamma_i\gamma_{i+1}=[p_{\alpha}^-\alpha]\alpha^*$ or $\gamma_i\gamma_{i+1}=\rho^*\rho^*$, then $\gamma_i\gamma_{i+1}\in I_\mb$, and we are done.
Hence we may assume that no such pair occurs.

\noindent\textbf{Subcase 2.1.}  $s(\gamma_1)\neq k^*$ and $t(\gamma_s)\neq k^*$. 
By the types of arrows at $k^*$, the block $[p_{\alpha}^-\alpha]\rho^*\alpha^*$ must occur consecutively whenever one of $[p_{\alpha}^-\alpha],\rho^*,\alpha^*$ occurs.
For each $\gamma_{i-1}\gamma_{i}\gamma_{i+1}=[p_{\alpha}^-\alpha]\rho^*\alpha^*$,
By Proposition~\ref{p: path ka is T irreducible}, $\phi_{\gamma_{i-1}}\phi_{\gamma_i}\phi_{\gamma_{i+1}}=\varphi_{[p_{\alpha}^-\alpha]}\varphi_{\rho}^*\varphi_{\alpha}^*=\varphi_{p_{\alpha}^-}$. For each $\gamma_i=[p]\in {\bf(T4)}$, we have $\phi_{[p]}=\varphi_p$. The same argument as in Subcase~1.1, together with Lemma~\ref{l: minimal relation 3 means 2}, yields an index $1\leq i\leq s-1$ such that $\gamma_i\gamma_{i+1}\in I_{\mb}$.
\noindent\textbf{Subcase 2.2.} Suppose that $s(\gamma_1)=k^*$ or $t(\gamma_s)=k^*$.
If $\gamma_s=[p_{\alpha}^-\alpha]$, set $\gamma_{s+1}\gamma_{s+2}=\rho^*\alpha^*$. If $\gamma_s=\rho^*$, set $\gamma_{s+1}\gamma_{s+2}=e_{k^*}\alpha^*$. Otherwise, set $\gamma_{s+1}\gamma_{s+2}=e_{t(\omega)}e_{t(\omega)}$.

\begin{itemize}
    \item If $s(\gamma_1)\neq k^*$, set $\omega'=\omega\gamma_{s+1}\gamma_{s+2}$.
By the same argument as in
\textbf{Subcase 1.2}, together with $\rho^*\alpha^*\notin I_\mb$ and $[p_{\alpha}^-\alpha]\rho^*\notin I_\mb$, the result follows.
\item If $s(\gamma_1)=k^*$, then $\gamma_1\gamma_2=\rho^*\alpha^*$ or $\gamma_1=\alpha^*$. The same argument as in \textbf{Subcase 1.2}, together with Lemma~\ref{l: condition for a*p=0}, gives the result.
\end{itemize}
\end{proof}

\begin{lem}\label{l: p1+p2 in I means pi in I}
Let $c,d\in(Q_T)_0$, and let $\omega_1,\ldots,\omega_t$ be distinct paths in $Q_T$ from $c$ to $d$, where $t\geq2$. Suppose that $\omega=\lambda_1\omega_1+\cdots+\lambda_t\omega_t\in\langle I_T\rangle$ for nonzero scalars $\lambda_i\in\bbk$. Then $\omega_l\in\langle I_T\rangle$ for every $1\leq l\leq t$.
\end{lem}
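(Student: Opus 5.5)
The idea is to push the linear relation back to $A$, where Lemma~\ref{l: p_1+p_2 in I infer pi in I} already gives the conclusion, and then return to $Q_T$ via Lemma~\ref{l:p=0-> ab=0} and Proposition~\ref{p: construction of Ib}. The condition $\omega\in\langle I_T\rangle$ means $\sum_l\lambda_l\phi_{\omega_l}=0$ in $\Hom_A(T_c,T_d)$. Discard first every $\omega_l$ containing a consecutive pair in $I_\mb$: such a path has $\phi_{\omega_l}=0$ by Proposition~\ref{p: construction of Ib}. We may then assume no $\omega_l$ contains a pair from $I_\mb$, and we must show that no nontrivial relation survives. The target is that the surviving $\phi_{\omega_l}$ are nonzero and pairwise linearly independent in a way detectable by paths of $Q$.

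\textbf{Step 1 (reduce to $c,d\neq k^*$).} If $c=k^*$ or $d=k^*$, I would pad every $\omega_l$ on the right and on the left as in Subcases~1.2 and~2.2 of Lemma~\ref{l:p=0-> ab=0}. Append $\alpha^*$ (respectively $\rho^*\alpha^*$) after the terminal $[p_\alpha^-\alpha]$, which is determined by $\omega_l$. At the start $k^*$, precompose with a suitable $[p_{\alpha}^-\alpha]$, or use that $\pi$ is an epimorphism: $\sum\lambda_l\phi_{\omega_l}=0$ iff $\sum\lambda_l\phi_{\omega_l}\pi_j=0$ for each $j$. For a target at $k^*$, postcompose with the components of $\pi$, using the fact that $\pi$ is a minimal right approximation and $P(k)^*$ is cogenerated by $\add\overline T$ via the $\varphi_{[p\alpha]}$, so that we may test against them. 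The padding chosen is not in $I_\mb$, and Lemma~\ref{l:p=0-> ab=0} shows it keeps each $\omega_l$ nonzero. This yields paths between vertices $\neq k^*$.

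\textbf{Step 2 (translate to $Q$).} For a path $\omega_l$ in $Q_T$ between vertices of $Q_0\setminus\{k\}$, every occurrence of an arrow at $k^*$ comes in the block $[p_\alpha^-\alpha]\alpha^*$ (or $[p_\alpha^-\alpha]\rho^*\alpha^*$), since the forbidden pairs have been excluded. By Lemma~\ref{l: morpshism relations of phi and varphi}, $\phi_{\omega_l}=\varphi_{\hat\omega_l}$, where $\hat\omega_l$ is the path of $Q$ obtained by replacing blocks by $p_\alpha^-$ and each $[p]$ by $p$. Because $\omega_l$ has no $I_\mb$-pair, Lemma~\ref{l: minimal relation 3 means 2} and Proposition~\ref{p: construction of Ib} give that $\hat\omega_l$ has no relation at the junctions. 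Since $I$ is quadratic, $\hat\omega_l\notin\langle I\rangle$.

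\textbf{Step 3 (injectivity of $\omega\mapsto\hat\omega$).} Distinct $\omega_l$ must give distinct $\hat\omega_l$. I expect this to be the main obstacle. The decomposition of a path of $Q$ into the pieces $p_\alpha^-$ and (T4)-paths $p\in\mathcal P_k$ has to be unique. The approach is to cut $\hat\omega$ at its internal visits to vertices $\neq k$. Each segment is in $\mathcal P_k$, and whether it is a (T4)-arrow or a $p_\alpha^-$ is decided by whether it composes to zero with some incoming arrow at $k$; the two options are exclusive by Proposition~\ref{p:path ab is T irreducible}. Given this, Lemma~\ref{l: p_1+p_2 in I infer pi in I} applied to $\sum\lambda_l\varphi_{\hat\omega_l}=0$ forces every $\hat\omega_l\in\langle I\rangle$, contradicting Step~2. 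Hence after discarding no terms remain, and each original $\omega_l$ lies in $\langle I_T\rangle$.
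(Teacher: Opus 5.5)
You have a genuine gap in Step 1, in the case where the source is $c=k^*$. Steps 2 and 3 are sound. Step 3 even supplies something the paper's Case 1 uses tacitly: when it applies Lemma~\ref{l: p_1+p_2 in I infer pi in I}, it needs the translated paths to be pairwise distinct, and your cutting argument gives exactly that.

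\textbf{The gap.} At a source $k^*$ you propose to prepend some $[p_\alpha^-\alpha]$. You justify this by claiming that $P(k)^*$ is cogenerated by $\add\overline T$ via the $\varphi_{[p\alpha]}$. That claim is false in general. If no $p_\alpha^-$ exists, then $\Hom_A(P(k)^*,\overline T)=0$ (last case of Proposition~\ref{p:basis for k*c}).

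Here is a case where the lemma has real content and your method has nothing to work with. Take arrows $\alpha_j\colon a_j\to k$ and $\gamma_j\colon a_j\to d$ for $j=1,2$, with $I=\emptyset$, and mutate at the sink $k$. Then $Q_T$ is the square $k^*\to a_j\to d$ with $I_\mb=\emptyset$. The lemma says exactly that $\lambda_1\alpha_1^*[\gamma_1]+\lambda_2\alpha_2^*[\gamma_2]\notin\langle I_T\rangle$ when $\lambda_1\lambda_2\neq0$. Yet no arrow enters $k^*$, so there is nothing to prepend. More generally, the method fails whenever the paths leave $k^*$ through different initial segments and a needed $p_\alpha^-$ is missing.

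The alternative you offer, $\sum\lambda_l\phi_{\omega_l}=0\iff\sum\lambda_l\phi_{\omega_l}\pi_j=0$, does not apply here either. It acts at the terminal vertex (it needs $d=k^*$), not at the start.

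\textbf{The fix: use the exchange sequence on the other side.} This is the mechanism of Lemma~\ref{l: condition for a*p=0}, which the paper invokes at this point after factoring out a common prefix.
\begin{enumerate}
\item First reduce to $d\neq k^*$, as below.
\item Group the $I_\mb$-free paths leaving $k^*$ by their initial segment: $\alpha_j^*$, or in the loop case $\rho^*\alpha^*$ and $\alpha^*$.
\item By Step 2, $\phi_\omega=\pi g$, where $g\colon P(d)\to M_k$ has components $\varphi_{\hat X_j}$. Here $\hat X_j$ is a combination of distinct, nonzero translated paths (Steps 2 and 3).
\item Exactness gives $g=ih$ for some $h\in\Hom_A(P(d),P(k))$. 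So each $\varphi_{\hat X_j}$ equals $\varphi_{\alpha_j}h$ (in the loop case, $\varphi_\alpha h$ and $\varphi_{\alpha\rho}h$). Either way it is a combination of paths beginning with $\alpha_j$.
\item No translated path in $\hat X_j$ begins with $\alpha_j$. Indeed, $\alpha_j^*[q]\notin I_\mb$ and $\alpha_j^*[p^-_{\alpha'}\alpha']\notin I_\mb$ mean that neither $q$ nor $p^-_{\alpha'}$ has $\Ini$ equal to $\alpha_j$ (Proposition~\ref{p: construction of Ib}).
\item Lemma~\ref{l: p_1+p_2 in I infer pi in I} then puts these translated paths in $\langle I\rangle$, contradicting Step 2.
\end{enumerate}

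\textbf{The target reduction needs uniform padding.} Padding ``determined by $\omega_l$'' does not preserve the linear relation. Instead, take one relation per component $\pi_j$: $\omega\alpha_j^*$, or in the loop case $\omega\rho^*\alpha^*$ and $\omega\alpha^*$. Terms that acquire a junction pair in $I_\mb$ drop out, and every $\omega_l$ survives in at least one of these relations.

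With these two repairs your argument is complete.
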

\begin{proof}
We only prove the result when $k$ carries a loop. The loop-free case is analogous. Denote the unique loop by $\rho$ and the unique non-loop incoming arrow at $k$ by $\alpha$.

For each $1\leq l\leq t$, write $\omega_l=\gamma_{l1}\cdots\gamma_{l,s_l}$, where each $\gamma_{lj}\in(Q_T)_1$.
If for some $1\leq l\leq t$, there exists $1\leq i\leq s_l-1$ such that $\gamma_{li}\gamma_{l,i+1}=[p_{\alpha}^-\alpha]\alpha^*$ or $\gamma_{l,i}\gamma_{l,i+1}=\rho^*\rho^*$, then $\gamma_{l,i}\gamma_{l,i+1}\in I_\mb$, so $\omega_l\in\langle I_T\rangle$. Repeating this argument, we may assume that no such pair occurs in any $\omega_l$ for $1\le l\le t$.

Note that  $c=k^*$ if and only if  $\gamma_{l1}\in\{\rho^*,\alpha^*\}$ for every $l$.
Similarly, $d=k^*$ if and only if  $\gamma_{l,s_l}\in\{\rho^*,[p_{\alpha}^-\alpha]\}$ for every $l$.

\noindent\textbf{Case 1.}
Assume that $c\neq k^*$ and $d\neq k^*$.
Then the block $[p_{\alpha}^-\alpha]\rho^*\alpha^*$ must occur consecutively whenever one of $[p_{\alpha}^-\alpha],\rho^*,\alpha^*$ occurs. For each $i,l$ such that  ${\gamma_{l,i-1}}{\gamma_{li}}{\gamma_{l,i+1}}=[p_{\alpha}^-\alpha]\rho^*\alpha^*$,
by Proposition~\ref{p: path ka is T irreducible}, $\phi_{\gamma_{l,i-1}}\phi_{\gamma_{li}}\phi_{\gamma_{l,i+1}}=\varphi_{[p_{\alpha}^-\alpha]}\varphi_{\rho}^*\varphi_{\alpha}^*=\varphi_{p_{\alpha}^-}$. For each $\gamma_{li}=[p]\in {\bf(T4)}$, we have $\phi_{[p]}=\varphi_p$. Therefore,
$$0=\lambda_1\phi_{\omega_1}+\cdots+\lambda_t\phi_{\omega_t}=\lambda_1\varphi_{q_1}+\cdots+\lambda_t\varphi_{q_t}=\varphi_{\lambda_1{q_1}+\cdots+\lambda_t{q_t}}$$
for some paths $q_1,\ldots,q_t$ in $Q$. Since $A$ is gentle, Lemma~\ref{l: p_1+p_2 in I infer pi in I} gives $q_l\in\langle I\rangle$ for every $1\leq l\leq t$. Thus $\phi_{\omega_l}=\varphi_{q_l}=0$, so $\omega_l\in\langle I_T\rangle$ for every $l$.

\noindent\textbf{Case 2.}
Assume that $c=k^*$ and $d=k^*$. Then $\gamma_{l1}\in\{\rho^*,\alpha^*\}$ and $\gamma_{l,s_l}\in\{\rho^*,[p_{\alpha}^-\alpha]\}$ for every $1\le l\le t$.

\noindent\textbf{Subcase 2.1.}
Assume that $\gamma_{l1}=\gamma_{j1}$ and $\gamma_{l,s_l}=\gamma_{j,s_j}$ for all $1\leq l,j\leq t$.

First suppose that
$\gamma_{l1}=\rho^*$ and $\gamma_{l,s_l}=[p_{\alpha}^-\alpha]$ for every $1\leq l\leq t$. Then $\gamma_{l2}=\alpha^*$ for every $l$.
Set $\gamma_{l,s_l+1}=\rho^*$ and $\gamma_{l,s_l+2}=\alpha^*$, and let $\omega'=\omega\rho^*\alpha^*$.
Then $$\omega'=\omega\rho^*\alpha^*=\rho^*\alpha^*(\lambda_1\omega_1'+\cdots+\lambda_t\omega_t')\in\langle I_T\rangle.$$
If $\lambda_1\omega_1'+\cdots+\lambda_t\omega_t'\in\langle I_T\rangle$,
then $s(\omega_l')=t(\alpha^*)\neq k^*$ and $t(\omega_l')=t(\alpha^*)\neq k^*$ for every $l$.
By Case~1, $\omega_l'\in\langle I_T\rangle$ for every $1\leq l\leq t$.
Lemma~\ref{l:p=0-> ab=0} then gives $1\leq j_l\leq s_l+1$ such that $\gamma_{l,j_l}\gamma_{l,j_l+1}\in I_\mb$. Since $\gamma_{l,s_l}\gamma_{l,s_l+1}=[p_{\alpha}^-\alpha]\rho^*\notin I_{\mb}$ and $\gamma_{l,s_l+1}\gamma_{l,s_l+2}=\rho^*\alpha^*\notin I_{\mb}$, we must have $1\leq j_l\leq s_l-1$. Hence $\omega_l\in\langle I_T\rangle$ for every $l$.

If $\lambda_1\omega_1'+\cdots+\lambda_t\omega_t'\notin\langle I_T\rangle$, then
$$0=\phi_{\omega'}=\phi_{\rho^*\alpha^*(\lambda_1\omega_1'+\cdots+\lambda_t\omega_t')}=\varphi_{\rho}^*\varphi_{\alpha}^*\varphi_{\lambda_1q_1'+\cdots+\lambda_tq_t'}$$
for some paths $q_1',\ldots,q_t'$ in $Q$ with $s(q_1')=\cdots=s(q_t')\neq k$ and $t(q_1')=\cdots=t(q_t')\neq k$. By Lemma~\ref{l: condition for a*p=0}, $\alpha=\Ini(q_l')$ for every $l$. Hence $\gamma_{l3}=[\alpha p_{\alpha}^+]$ or $\gamma_{l3}=[p_{\alpha}^-\alpha]$ with $\Ini(p_{\alpha}^-)=\alpha$. Therefore $\gamma_{l2}\gamma_{l3}\in I_\mb$, and so $\omega_l\in\langle I_T\rangle$ for every $l$.
The remaining possibilities are treated in the same way.

\noindent\textbf{Subcase 2.2.}
Assume that there exist $l\neq j$ with $\gamma_{l1}\neq\gamma_{j1}$, while $\gamma_{l,s_l}=\gamma_{j,s_j}$ for all $l,j$.

Without loss of generality, suppose that $\gamma_{l1}=\rho^*$ for $1\leq l\leq m$ and $\gamma_{l1}=\alpha^*$ for $m+1\leq l\leq t$, where $1\leq m<t$.
Assume further that, for every $1\leq l\leq t$,
$\gamma_{l,s_l}=[p_{\alpha}^-\alpha]$.

Set $\gamma_{l0}=[p_{\alpha}^-\alpha]$, $\gamma_{l,s_l+1}=\rho^*$, $\gamma_{l,s_l+2}=\alpha^*$, and $\omega_l'=[p_{\alpha}^-\alpha]\omega_l\rho^*\alpha^*$ for every $1\leq l\leq t$.
Then $$\omega'=[p_{\alpha}^-\alpha]\omega\rho^*\alpha^*=\lambda_1\omega_1'+\cdots+\lambda_t\omega_t'\in\langle I_T\rangle.$$
Since $[p_{\alpha}^-\alpha]\alpha^*\in I_\mb$, we have
$\omega_l'\in\langle I_T\rangle$ for every $m+1\leq l\leq t$.
Therefore, $$\omega''=\omega'-\sum_{j=m+1}^t\lambda_j\omega_j'=\lambda_1\omega_1'+\cdots+\lambda_m\omega_m'\in\langle I_T\rangle.$$
Since $s(\omega'')\neq k^*$ and $t(\omega'')\neq k^*$, Case~1 gives
$\omega_l'\in\langle I_T\rangle$ for every $1\leq l\leq m$.
For each $1\leq l\leq m$, Lemma~\ref{l:p=0-> ab=0} gives $0\leq j_l\leq s_l+1$ such that $\gamma_{l,j_l}\gamma_{l,j_l+1}\in I_\mb$. Since $\gamma_{l,0}\gamma_{l,1}=[p_{\alpha}^-\alpha]\rho^*\notin I_{\mb}$, $\gamma_{l,s_l}\gamma_{l,s_l+1}=[p_{\alpha}^-\alpha]\rho^*\notin I_{\mb}$, and $\gamma_{l,s_l+1}\gamma_{l,s_l+2}=\rho^*\alpha^*\notin I_{\mb}$, we must have $1\leq j_l\leq s_l-1$. Hence $\omega_l\in\langle I_T\rangle$, for $1\le l\le m$.

It remains to show that $\omega_l\in\langle I_T\rangle$ for $m+1\leq l\leq t$.
Set $$\omega''=[p_{\alpha}^-\alpha]\rho^*\omega\rho^*\alpha^*=\lambda_1\omega_1''+\cdots+\lambda_t\omega_t''\in\langle I_T\rangle.$$
Using $\rho^*\rho^*\in I_\mb$ and the same argument as above, we obtain $\omega_l\in\langle I_T\rangle$ for every $m+1\leq l\leq t$.

\noindent\textbf{Subcase 2.3.}
Assume that $\gamma_{l,s_l}\neq\gamma_{j,s_j}$ for some $l\neq j$, while $\gamma_{l1}=\gamma_{j1}$ for all $l,j$. The argument is analogous to that of Subcase~2.2.

\noindent\textbf{Subcase 2.4.}
Assume that $\gamma_{a1}\neq\gamma_{b1}$ for some $a\neq b$ and that $\gamma_{l,s_l}\neq\gamma_{j,s_j}$ for some $l\neq j$.

Without loss of generality, assume that $\gamma_{a1}=\rho^*$, $\gamma_{b1}=\alpha^*$, it is obvious that $\gamma_{a2}=\alpha^*=\gamma_{b1}$. Similarly, assume that $\gamma_{l,s_l}=\rho^*,\gamma_{j,s_j}=[p_{\alpha}^-\alpha]$, furthermore, $\gamma_{l,s_l-1}=[p_{\alpha}^-\alpha]=\gamma_{j,s_j}$. 

We consider the cases where $\gamma_{a1}\ne\gamma_{b1},\gamma_{a,s_a}\ne\gamma_{b,s_b}$, the other cases are analogous to Subcase 2.2, 2.3.
Specifically, let $\gamma_{a1}=\rho^*,\gamma_{b_1}=\alpha^*$, and $\gamma_{a,s_a}, \gamma_{b,s_b}\in\{\rho^*,[p_\alpha^-\alpha]\}$ with $\gamma_{a,s_a}\ne \gamma_{b,s_b}$. 

When  $\gamma_{a,s_a}=\rho^*,\gamma_{b,s_b}=[p_\alpha^-\alpha]$,  we obtain $\omega_a=\rho^*\omega_b\rho^*$. 
When $\gamma_{a,s_a}=[p_\alpha^-\alpha],\gamma_{b,s_b}=\rho^*$, then  $\omega_a=\rho^*\omega_a',\omega_b=\omega_b'\rho^*,$ for $\omega_a'=\gamma_{a,2}\dots\gamma_{a,s_a}=\omega_{b1}\dots\omega_{b,s_b-1}=\omega_b'$. 
 No matter in which cases the arguments are analogous to that of Subcase 2.2.

\noindent\textbf{Case 3.}
Assume that $c=k^*$ and $d\neq k^*$. The argument is analogous to that of Case~2.

\noindent\textbf{Case 4.}
Assume that $c\neq k^*$ and $d=k^*$. The argument is analogous to that of Case~2.
\end{proof}

We can now state the main result.
\begin{thm}\label{t: qt-ib is a gentle pair}
$(Q_T,I_\mb)$ is a gentle pair and
    $$\End_A(T)\cong \bbk Q_T/\langle I_\mb\rangle.$$
\end{thm}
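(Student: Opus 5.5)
The plan is to assemble the theorem from the results of Section~\ref{s:s6 endo}, in two parts: first identify the defining ideal, then check the gentle axioms.

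\textbf{Presentation.} By Proposition~\ref{p:Gabriel-quiver}, $Q_T$ is the Gabriel quiver of $B=\End_A(T)$. Sending each arrow $\eta$ to $\phi_\eta$ gives a surjection $\bbk Q_T\to B$ with kernel $\langle I_T\rangle$, which is admissible. It then suffices to prove $\langle I_T\rangle=\langle I_\mb\rangle$.

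The inclusion $\langle I_\mb\rangle\subseteq\langle I_T\rangle$ follows directly from Proposition~\ref{p: construction of Ib}, since every element of $I_\mb$ is a path $\omega_1\omega_2$ with $\phi_{\omega_1}\phi_{\omega_2}=0$.

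For the reverse inclusion, take $x\in\langle I_T\rangle$. Decompose it as $x=\sum_{c,d}\xi_c x\xi_d$, so we may assume $x$ is a linear combination $\lambda_1\omega_1+\cdots+\lambda_t\omega_t$ of distinct paths from $c$ to $d$.
\begin{itemize}
\item If $t\ge 2$, Lemma~\ref{l: p1+p2 in I means pi in I} shows that every $\omega_l$ lies in $\langle I_T\rangle$.
\item If $t=1$, this holds trivially.
\end{itemize}
Lemma~\ref{l:p=0-> ab=0} then provides a consecutive pair $\gamma_i\gamma_{i+1}$ in each $\omega_l$ with $\gamma_i\gamma_{i+1}\in I_\mb$, so $\omega_l\in\langle I_\mb\rangle$. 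Paths of length $\le 1$ cannot lie in $\langle I_T\rangle$, since arrows of the Gabriel quiver are nonzero and linearly independent modulo $\rad^2$. This gives $B\cong\bbk Q_T/\langle I_\mb\rangle$.

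\textbf{Gentleness.} Conditions (G1)--(G3) are exactly (g1)--(g3) of Lemma~\ref{l: Q' satisfy g2-g3}. Condition (G4) holds because $I_\mb$ consists of paths of length two by Definition~\ref{d:quadratic relations}, and it is finite since $Q_T$ is finite. Finite dimensionality is automatic, because $B$ is the endomorphism algebra of a finitely generated module.

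One subtlety remains: the gentle definition requires the presentation ideal to be admissible. Here $\langle I_\mb\rangle=\langle I_T\rangle$, and $\langle I_T\rangle$ is admissible since it is the kernel of the map from the path algebra of the Gabriel quiver. So $(Q_T,I_\mb)$ is a gentle pair.

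\textbf{Main obstacle.} The substantive step is $\langle I_T\rangle\subseteq\langle I_\mb\rangle$. It rests on Lemmas~\ref{l:p=0-> ab=0} and~\ref{l: p1+p2 in I means pi in I}, whose proofs reduce compositions through $k^*$ back to path morphisms in $A$ via the blocks $[p_\alpha^-\alpha]\alpha^*$ (or $[p_\alpha^-\alpha]\rho^*\alpha^*$). Since both lemmas are already available, the proof of the theorem itself amounts to this bookkeeping. The only care needed is the reduction to homogeneous elements, using the idempotent decomposition $\xi_c x\xi_d$.
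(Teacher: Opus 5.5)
Your proposal is correct and follows the same route as the paper. The paper's proof likewise gets $I_\mb\subseteq I_T$ from the construction, obtains $\langle I_T\rangle=\langle I_\mb\rangle$ from Lemmas~\ref{l:p=0-> ab=0} and~\ref{l: p1+p2 in I means pi in I}, and reads off the gentle axioms from Lemma~\ref{l: Q' satisfy g2-g3}; you additionally spell out the bookkeeping the paper leaves implicit, namely the reduction via $\xi_c x\xi_d$ to combinations of parallel paths, the $t=1$ case, and admissibility of the ideal.
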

\begin{proof}
By definition, $I_\mb\subseteq I_T$. Lemmas~\ref{l: Q' satisfy g2-g3}, \ref{l:p=0-> ab=0}, and \ref{l: p1+p2 in I means pi in I} imply that $\langle I_T\rangle=\langle I_\mb\rangle$. This proves the theorem.
\end{proof}

\section{A combinatorial description of tilting mutations}\label{s:s7 ex}

Let $(Q,I)$ be a gentle pair. In this section, let $k\in Q_0$ be a vertex at which the tilting mutation exists. For an incoming non-loop arrow $\alpha$ at $k$, we recall the following paths associated with $\alpha$.
\begin{itemize}
    \item $p_{\alpha}^-$ denotes the shortest nonzero path ending at $s(\alpha)$ such that $p_{\alpha}^- \alpha \in \langle I\rangle $ and $s(p_{\alpha}^-) \neq k$ (if such a path exists).
     \item $p_{\alpha}^+$ denotes the shortest path starting at $k$ such that $\alpha p_{\alpha}^+\notin\langle I\rangle$,  $t(p_{\alpha}^+)\neq k$ and for every arrow $\beta: t(p_{\alpha}^+)\to k$, $p_{\alpha}^+ \beta \notin \langle I\rangle $ (if such a path exists).
\end{itemize}
\subsection{General case}
\begin{definition}\label{d: tilting mutation in general}
 We define a new pair $(Q',I')=\mu_k^+(Q,I)$, called the \emph{tilting mutation} of $(Q,I)$ at $k$, as follows.
  \begin{itemize}
 \item[\textbf{Step 1}:] Set $Q'=Q\cup \{k^*\}$ and $I'=I$;
  \item[\textbf{Step 2}:] For each arrow $ a\xra{\alpha} k$ in $Q_1$, 
  \begin{itemize}
      \item replace $\alpha$ in $Q_1'$ by a new arrow $\alpha^*:k^*\ra a$;
      \item delete  $\alpha\beta$ from $I'$ if there is an outgoing arrow $\beta$ at $k$ such that $\alpha\beta\in I$;
      \item if $\alpha$ is a loop $\rho$, delete $\rho^2$ from $I'$ and add $(\rho^*)^2$ to $I'$.
  \end{itemize}
    \item[\textbf{Step 3}:]
    For each arrow $\alpha \colon a \to k$  (with $a \neq k$) in $Q_1$, if $p_{\alpha}^-$ exists, let $\gamma_{\alpha}=\Ter(p_{\alpha}^-)$.
    \begin{itemize}
    \item replace  $\gamma_{\alpha}$ in $Q_1'$ by a new arrow $[p_{\alpha}^-\alpha]\colon s(p_{\alpha}^-)\to k^*$;
    \item delete the relation $\gamma_{\alpha}\alpha$ from $I'$;
    \item if there is no loop at $k$, add the relation $[p_{\alpha}^-\alpha]\alpha'^*$ to $I'$ (if the other incoming arrow $\alpha'$ in $Q_1$ at $k$ exists); 
    \item if there is a loop at $k$, add the relation $[p_{\alpha}^-\alpha]\alpha^*$ to $I'$;
    \item if $p_{\alpha}^-=\gamma_{\alpha}$ and a relation $\beta\gamma_{\alpha}\in I'$ exists, 
    replace it by $\beta[p_{\alpha}^-\alpha]$;
    \item if $\Ini(p_{\alpha}^-)=\alpha''$ for some incoming arrow $\alpha''$ at $k$, add the relation $\alpha''^*[p_{\alpha}^-\alpha]$ to $I'$;
\end{itemize}

 \item[\textbf{Step 4}:] 
For each arrow $\alpha \colon a \to k$ (with $a \neq k$) in $Q_1$, if $p_{\alpha}^+$ exists,
let $\omega_{\alpha}=\Ter(p_{\alpha}^+)$. 
   \begin{itemize}
        \item replace $\omega_{\alpha}$ in $Q_1'$ by a new arrow $[\alpha p_{\alpha}^+]:a\ra t(p_{\alpha}^+)$;
       \item add the relation $\alpha^*[\alpha p_{\alpha}^+]$ to $I'$;
       \item 
    if a relation $\omega_{\alpha}\gamma\in I'$ exists, replace it by $[\alpha p_{\alpha}^+]\gamma$.
   \end{itemize}
\item[\textbf{Step 5}:] Delete $k$ from $Q'$ and replace $k^*$ by $k$.

\end{itemize}
We denote the resulting gentle pair by $(Q',I')=\mu_k^+(Q,I)=(\mu_k^+(Q),\mu_k^+(I))$.

\end{definition}

The main result of this section is the following.
\begin{thm}\label{t: tilitng mutation in general}
 Let $A=\bbk Q/\langle I\rangle$ be a gentle algebra. 
Let $k\in Q_0$ be such that, for every outgoing arrow $\beta$ at $k$, there is an incoming arrow $\alpha$ at $k$ with $\alpha\beta\notin I$. Let $T=\overline{T}\oplus P(k)^*$, then
\begin{enumerate}
    \item $\mu_k^+(Q,I)$ is a gentle pair;
    \item $(Q_T,I_T)=\mu_k^+(Q,I)$, i.e., $\End(T)\cong \bbk\mu_k^+(Q)/\langle \mu_k^+(I)\rangle$.
\end{enumerate}

\end{thm}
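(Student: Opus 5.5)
The strategy is to deduce Theorem~\ref{t: tilitng mutation in general} from Theorem~\ref{t: qt-ib is a gentle pair}. That theorem already gives $\End_A(T)\cong\bbk Q_T/\langle I_\mb\rangle$ and says $(Q_T,I_\mb)$ is gentle. The hypothesis on $k$ is condition (3) of Theorem~\ref{t: conditions for approximation is injective0}, so the mutation exists and $T$ is a tilting module. Hence both (1) and (2) follow once I show that, after the identification $k^*\leftrightarrow k$ of Step~5, there is an isomorphism of quivers $\mu_k^+(Q)\cong Q_T$ sending $\mu_k^+(I)$ bijectively onto $I_\mb$. Gentleness of $\mu_k^+(Q,I)$ is then transported from $(Q_T,I_\mb)$. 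Lemma~\ref{l: Q' satisfy g2-g3} gives (G1)--(G3); (G4) and finite-dimensionality hold because the algebra is isomorphic to $\End_A(T)$.

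\textbf{Arrows.} Following Proposition~\ref{p:Gabriel-quiver}, $(Q_T)_1=\bfA_T^{\new}\cup\bfA_T^{\old}$. I would match the steps of Definition~\ref{d: tilting mutation in general} with these two pieces.
\begin{itemize}
\item Step~2 produces the arrows $\alpha^*$ and $\rho^*$, that is, types (T1) and (T2).
\item Step~3 replaces $\gamma_\alpha=\Ter(p_\alpha^-)$ by $[p_\alpha^-\alpha]$. By Lemma~\ref{l:p is the shortest}(2) and Remark~\ref{r: arrows in QT}, this is the unique arrow of type (T3) attached to $\alpha$.
\item Step~4 replaces $\omega_\alpha=\Ter(p_\alpha^+)$ by $[\alpha p_\alpha^+]$.
\end{itemize}
The arrows not touched by Steps 2--4 should be exactly those in $\AfixQ$, giving $\bfA_T^{\old}$. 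The point to check is that the deleted arrows are precisely $\AmutQ$.
\begin{itemize}
\item An arrow $\gamma$ with $\gamma\alpha\in I$ is $\gamma_\alpha$, either because $p_\alpha^-=\gamma$ or because $t(\gamma)=k$.
\item An outgoing arrow $\beta$ at $k$ is either $\Ini(p_\alpha^+)$ with $p_\alpha^+$ of length one, or is replaced as $\Ter(p_\alpha^+)$, or is a loop.
\item Incoming arrows are handled by Step~2.
\end{itemize}
I must also make sure no arrow is deleted twice. This uses uniqueness of $p_\alpha^\pm$ (Remark~\ref{r: arrows in QT}(1)) together with (G2)/(G3).

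\textbf{Relations.} I would compare $\mu_k^+(I)$ with Definition~\ref{d:quadratic relations} item by item.
\begin{itemize}
\item Relations of $I$ between arrows of $\AfixQ$ survive unchanged. These are the inherited $[\gamma_1][\gamma_2]$.
\item The deletions in Steps 2 and 3 remove exactly $I^{\mathrm{mut}}_Q$, and these relations do not appear in $I_\mb$ by the Remark following Proposition~\ref{p: construction of Ib}.
\item The additions match the mutation relations at $k^*$: namely $[p_\alpha^-\alpha]\alpha'^*$ (or $[p_\alpha^-\alpha]\alpha^*$ in the loop case), $\alpha^*[\alpha p_\alpha^+]$, $\alpha''^*[p_\alpha^-\alpha]$ with $\alpha''=\Ini(p_\alpha^-)$, and $(\rho^*)^2$.
\item The replacement rules $\beta\gamma_\alpha\mapsto\beta[p_\alpha^-\alpha]$ and $\omega_\alpha\gamma\mapsto[\alpha p_\alpha^+]\gamma$ yield the inherited relations $[\gamma][p_\alpha^-\alpha]$ and $[\alpha p_\alpha^+][\gamma]$. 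The corresponding criteria are $\gamma p_\alpha^-\in\langle I\rangle$, equivalently $\gamma\,\Ini(p_\alpha^-)\in I$, and $p_\alpha^+\gamma\in\langle I\rangle$, equivalently $\Ter(p_\alpha^+)\gamma\in I$. Proposition~\ref{p: construction of Ib}, Case~3, confirms these are exactly the zero compositions.
\end{itemize}

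\textbf{Main obstacle.} The hardest part is the case where $p_\alpha^-$ has length at least two, or where it passes through $k$, which happens when there is a 2-cycle or a loop. In that case the relation $\beta\gamma_\alpha$ is replaced only when $p_\alpha^-=\gamma_\alpha$. I have to show that when $p_\alpha^-$ is longer, the relation on $\Ini(p_\alpha^-)$ is either retained in $I$ between fixed arrows or is accounted for by $\alpha''^*[p_\alpha^-\alpha]$ or by $[\alpha'p_{\alpha'}^+][p_\alpha^-\alpha]$. The same issue arises for the term $p_{\alpha'}^+p_\alpha^-\in\langle I\rangle$. I would handle this by splitting into the four local configurations (loop or not, 2-cycle or not) and checking each against Definition~\ref{d:quadratic relations} directly.
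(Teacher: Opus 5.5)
Your route is the paper's: reduce via Theorem~\ref{t: qt-ib is a gentle pair} to proving $(Q_T,I_\mb)=\mu_k^+(Q,I)$, match Steps 2--4 with the arrow types (T1)--(T4), note $I'\subseteq I_\mb$, and use Proposition~\ref{p: construction of Ib} to reduce $I_\mb\subseteq I'$ to the relations involving $[p_\alpha^-\alpha]$. The gap is that the only step the paper actually has to argue is the one you call the main obstacle. You leave it as an unexecuted check over four local configurations, and one of the outcomes you anticipate is impossible. The missing idea is uniform and needs no case split. If $p_\alpha^-$ has length at least two, then $p_\alpha^-\in\mpik(s(p_\alpha^-),a)$ with $s(p_\alpha^-)\neq k$. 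Hence its initial arrow $\alpha''=\Ini(p_\alpha^-)$ is a non-loop arrow $s(p_\alpha^-)\to k$.

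This observation settles both problematic families.

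\textbf{The family $[\alpha'p_{\alpha'}^+][p_\alpha^-\alpha]$.} Such a relation in $I_\mb$ means $\Ter(p_{\alpha'}^+)\Ini(p_\alpha^-)\in I$. If $p_\alpha^-$ were long, this would give $p_{\alpha'}^+\alpha''\in\langle I\rangle$ with $\alpha''\colon t(p_{\alpha'}^+)\to k$, contradicting the definition of $p_{\alpha'}^+$. Hence $p_\alpha^-=\gamma_\alpha$ is an arrow. Step 3 then rewrites $\Ter(p_{\alpha'}^+)\gamma_\alpha$ as $\Ter(p_{\alpha'}^+)[p_\alpha^-\alpha]$, and Step 4 rewrites that as $[\alpha'p_{\alpha'}^+][p_\alpha^-\alpha]$. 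This is precisely the paper's argument, and the same reasoning applies when $k$ carries a loop.

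\textbf{The family $[\gamma][p_\alpha^-\alpha]$ with $\gamma\in\AfixQ$.} If $p_\alpha^-$ were long, this would force $\gamma\alpha''\in I$ with $\alpha''\in\AoinQ(k)$, i.e.\ $\gamma\in\AmutQ$. So this case does not occur. In particular, your option that the relation on $\Ini(p_\alpha^-)$ is ``retained in $I$ between fixed arrows'' cannot happen. That relation is $\gamma_{\alpha''}\alpha''$, which Step 3 deletes; its counterpart in $I_\mb$ is $\alpha''^*[p_\alpha^-\alpha]$.

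There are also two smaller slips in your arrow bookkeeping. First, for $\gamma\alpha\in I$ you mean $s(\gamma)=k$, not $t(\gamma)=k$, since $t(\gamma)=s(\alpha)\neq k$. Second, an outgoing arrow at $k$ can also be removed in Step 3 as $\gamma_\alpha=\Ter(p_\alpha^-)$, for example $\beta_2$ in Example~\ref{eg:mutation in without loop and have 2-cycle}. Your enumeration omits this case.
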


\begin{proof}  
    Set $(Q',I')=\mu_k(Q,I)$. Since $(Q_T,I_T)=(Q_T,I_\mb)$, it is enough to prove that
 $(Q_T,I_\mb)=(Q',I')$.   By construction, $Q_T=Q'$. Indeed, in the construction of $Q_T$, $\bf (T1)$ and $\bf(T2)$ correspond to \textbf{Step 2},
    $\bf(T3)$ corresponds to \textbf{Step 3}, and $\bf(T4)$ corresponds to \textbf{Step 4}.

For the relations, it is clear that $I'\subset I_\mb$. For the reverse inclusion, by the construction of $I'$ and Proposition~\ref{p: construction of Ib}, we only need to prove that when there is no loop at $k$, 
$$ \{[\alpha' p_{\alpha'}^+][p_{\alpha}^-\alpha]\mid \alpha',\alpha\in\AinQ(k),  t(p_{\alpha'}^+)=s(p_{\alpha}^-), p_{\alpha'}^+p_{\alpha}^-\in \langle I\rangle\} \subset I'.$$
Let $\alpha':a'\to k$ and $\alpha:a\to k$ be non-loop arrows in $\AinQ(k)$ such that $$a\neq a',  t(p_{\alpha'}^+)=s(p_{\alpha}^-),  p_{\alpha'}^+p_{\alpha}^-\in \langle I\rangle.$$ Since $p_{\alpha'}^+$ ends at $s(p_\alpha^-)$, by the definition of $p_{\alpha'}^+$, 
$\Ini(p_{\alpha}^-)$ is not an incoming arrow at $k$. Then by $p_{\alpha}^-\in\mpik(s(p_{\alpha}^-),a))$, one can get that  $p_{\alpha}^-$ is an arrow. 
Then after \textbf{Step 3}, we have $\Ter(p_{\alpha'}^+)[p_{\alpha}^-\alpha]\in I'$. After \textbf{Step 4}, we have $$[\alpha' p_{\alpha'}^+][p_{\alpha}^-\alpha]\in I'.$$
Therefore $I_T=I_\mb=I'$. Thus
$$(Q_T,I_T)=(Q',I')=\mu_k^+(Q,I).$$
In particular, $(Q',I')=\mu_k^+(Q,I)$ is a gentle pair since $(Q_T,I_T)$ is a gentle pair.
\end{proof}

\subsection{Without a loop or a 2-cycle}

Suppose that there is neither a loop nor a 2-cycle at $k$. Then the tilting mutation at $k$ simplifies as follows:   
\begin{itemize}
 \item[\textbf{Step 1}:] Set $Q'=Q$ and $I'=I$;
  \item[\textbf{Step 2}:] For each arrow $ a\xra{\alpha} k$ in $Q_1$, 
  \begin{itemize}
      \item replace $\alpha$ in $Q_1'$ by $\alpha^*:k\ra a$;
      \item delete $\alpha\beta$ from $I'$ if there is an outgoing arrow $\beta$ at $k$ such that $\alpha\beta\in I$;
  \end{itemize} 
    
    \item[\textbf{Step 3}:] For each pair of arrows $c\xra{\gamma} a\xra{\alpha} k$ in $Q_1$ with $\gamma\alpha\in I$, 
    \begin{itemize}
          \item replace  $\gamma$ in $Q_1'$ by a new arrow $[\gamma\alpha]: c\ra k$;
        \item delete $\gamma\alpha$ from $I'$;
        \item add $[\gamma\alpha]\alpha'^*$  to $I'$ if there is another  incoming arrow $\alpha'$   in $Q_1$  at $k$; 
          \item   replace each $\beta\gamma$  in $I'$ by $\beta[\gamma\alpha]$.
    
    \end{itemize}

 \item[\textbf{Step 4}:] For each pair of arrows $a\xra{\alpha}k\xra{\beta}c$ in $Q_1$ with $\alpha\beta\notin I$, 
   \begin{itemize}
        \item replace $\beta$ in $Q_1'$ by a new arrow $[\alpha\beta]: a\ra c$;
       \item add a new relation $\alpha^*[\alpha\beta]$ to $I'$;
    \item replace each $\beta\delta$ in  $I'$ by $[\alpha\beta]\delta$; 
   \end{itemize}

\end{itemize}

\begin{example}\label{eg:mutation in without loop and 2-cycle}
 Let $A=\bbk  Q/\langle I\rangle$ be a gentle algebra as follows:
\[\begin{tikzcd}[sep=small]
	&&&& b && \\
	\\
	c && a && k && n \\
	\\
	&&&& d
	\arrow["{\alpha_2}", from=1-5, to=3-5]
	\arrow["\gamma", from=3-1, to=3-3]
	\arrow["{\alpha_1}", from=3-3, to=3-5]
	\arrow["{\beta_1}", from=3-5, to=3-7]
	\arrow["{\beta_2}", from=3-5, to=5-5]
\end{tikzcd}\]
where  $I=\{\gamma\alpha_1,\alpha_1\beta_1,\alpha_2\beta_2\}$.
There is neither a loop nor a 2-cycle at $k$, then according to the above rules, we have $(Q',I')$ as follows:
\begin{itemize}
    \item [\textbf{Step 1}:] $Q'=Q, I'=I$;
    \item [\textbf{Step 2}:] After this step, $Q'$ is \[\begin{tikzcd}[sep=small]
	&&&& b && \\
	\\
	c && a &&\textcolor{rgb,255:red,214;green,92;blue,92} k && n \\
	\\
	&&&& d
	\arrow["{{\alpha_2^*}}", color={rgb,255:red,242;green,64;blue,67}, tail reversed, no head, from=1-5, to=3-5]
	\arrow["\gamma", from=3-1, to=3-3]
	\arrow["{{\alpha_1^*}}", color={rgb,255:red,242;green,64;blue,67}, tail reversed, no head, from=3-3, to=3-5]
	\arrow["{{\beta_1}}", from=3-5, to=3-7]
	\arrow["{{\beta_2}}", from=3-5, to=5-5]
\end{tikzcd}\]
and  $I'=\{\gamma\alpha_1\}$.
\item [\textbf{Step 3}:]
Using $\gamma\alpha_1\in I$, one can get the new quiver $Q'$ is
\[\begin{tikzcd}[sep=small]
	&&&& b && \\
	\\
	c && a &&\textcolor{rgb,255:red,214;green,92;blue,92} k && n \\
	\\
	&&&& d
	\arrow["{{\alpha_2^*}}", color={rgb,255:red,242;green,64;blue,67}, tail reversed, no head, from=1-5, to=3-5]
	\arrow["{[\gamma\alpha_1]}", color={rgb,255:red,242;green,64;blue,67}, curve={height=-24pt}, from=3-1, to=3-5]
	\arrow["{{\alpha_1^*}}", color={rgb,255:red,242;green,64;blue,67}, tail reversed, no head, from=3-3, to=3-5]
	\arrow["{{\beta_1}}", from=3-5, to=3-7]
	\arrow["{{\beta_2}}", from=3-5, to=5-5]
\end{tikzcd}\]
and $I'=\{[\gamma\alpha_1]\alpha_2^*\}$.
\item [\textbf{Step 4}:] Since $\alpha_1\beta_2\notin I, \alpha_2\beta_1\notin I$,  \textbf{Step 4} gives $[\alpha_1\beta_2],~[\alpha_2\beta_1]$, respectively: $Q'$ is
\[\begin{tikzcd}[sep=small]
	&&&& b && \\
	\\
	c && a && \textcolor{rgb,255:red,214;green,92;blue,92}{k} && n \\
	\\
	&&&& d
	\arrow["{{{\alpha_2^*}}}",color={rgb,255:red,242;green,64;blue,67}, tail reversed, no head, from=1-5, to=3-5]
	\arrow["{{{[\alpha_2\beta_1]}}}",color={rgb,255:red,242;green,64;blue,67}, from=1-5, to=3-7]
	\arrow["{{[\gamma\alpha_1]}}", shift left=2, color={rgb,255:red,242;green,64;blue,67}, curve={height=-18pt}, from=3-1, to=3-5]
	\arrow["{{{\alpha_1^*}}}",color={rgb,255:red,242;green,64;blue,67}, tail reversed, no head, from=3-3, to=3-5]
	\arrow["{{{[\alpha_1\beta_2]}}}"', color={rgb,255:red,242;green,64;blue,67}, from=3-3, to=5-5]
\end{tikzcd}\]
and $I'=\{[\gamma\alpha_1]\alpha_2^*,~\alpha_1^*[\alpha_1\beta_2],~\alpha_2^*[\alpha_2\beta_1]\}.$
\end{itemize}
 \end{example}

\subsection{With a loop but no 2-cycle}

Suppose that there is a loop $\rho$ but no 2-cycle at $k$. Denote the unique non-loop incoming arrow at $k$ by $\alpha: a\to k$.
Then the tilting mutation at $k$ simplifies as follows:   
\begin{itemize}
 \item[\textbf{Step 1}:] Set $Q'=Q$ and $I'=I$;
  \item[\textbf{Step 2}:] For $\rho$ and $\alpha$,
  \begin{itemize}
  \item replace $k\xra{\rho}k$ in $Q'$ by $k\xra{\rho^*}k$;
      \item  replace $\alpha: a\ra k$ in $Q'$ by $\alpha^*:k\ra a$;
      \item delete $\alpha\beta$ from $I'$ if there is an outgoing arrow $\beta$ at $k$ such that $\alpha\beta\in I$;
      \item replace $\rho^2$ in $I'$ by $(\rho^*)^2$.
  \end{itemize}
    \item[\textbf{Step 3}:] For each pair of arrows $c\xra{\gamma} a\xra{\alpha} k$ in $Q_1$ with $\gamma\alpha\in I$, 
    \begin{itemize}
          \item replace  $\gamma$ in $Q_1'$ by a new arrow $[\gamma\alpha]: c\ra k$;
        \item delete $\gamma\alpha$ from $I'$ and add a new relation $[\gamma\alpha]\alpha^*\in I'$;
          \item     replace each $\beta\gamma$  in $I'$ by $\beta[\gamma\alpha]$.
    \end{itemize}

 \item[\textbf{Step 4}:] For each configuration of arrows $a\xra{\alpha}k\xra{\rho}k\xra{\beta}c$ in $Q_1$, 
   \begin{itemize}
        \item replace  $\beta$ in $Q_1'$ by a new arrow $[\alpha\rho\beta]: a\ra c$;
       \item  add a new relation $\alpha^*[\alpha\rho\beta]$ to $I'$; 
       \item 
replace each $\beta\delta$ in  $I'$ by $[\alpha\rho\beta]\delta$. 
   \end{itemize}

\end{itemize}

\begin{example}\label{eg:mutation in with loop and no 2-cycle}
When $(Q,I)$ as follows:
\[\begin{tikzcd}[sep=small]
	&& k && \\
	\\
	a &&&& c
	\arrow["\rho", from=1-3, to=1-3, loop, in=55, out=125, distance=10mm]
	\arrow["\beta", from=1-3, to=3-5]
	\arrow["\alpha", from=3-1, to=1-3]
	\arrow["\gamma", from=3-5, to=3-1]
\end{tikzcd}\]
with $I=\{\alpha\beta,\beta\gamma,\gamma\alpha,\rho^2\}$. Then $(Q',I')$ can be obtained from $(Q,I)$ by the following steps:
\begin{itemize}
  \item[\textbf{Step 1}:] $Q'=Q, I'=I$;
  \item[\textbf{Step 2}:]There is a non-loop arrow $ a\xra{\alpha} k$ and a loop $ k\xra{\rho} k$ in $Q_1$, thus after this step, $Q'$ is 
  \[\begin{tikzcd}[sep=small]
	&& \textcolor{rgb,255:red,244;green,62;blue,65}{k} && \\
	\\
	a &&&& c
	\arrow["{\rho^*}", color={rgb,255:red,244;green,62;blue,65}, tail reversed, no head, from=1-3, to=1-3, loop, in=55, out=125, distance=10mm]
	\arrow["{\alpha^*}"', color={rgb,255:red,244;green,62;blue,65}, from=1-3, to=3-1]
	\arrow["\beta", from=1-3, to=3-5]
	\arrow["\gamma", from=3-5, to=3-1]
\end{tikzcd}\]
and $I'=\{\beta\gamma,\gamma\alpha,(\rho^*)^2\}$.
    \item[\textbf{Step 3}:] There is $c\xra{\gamma} a\xra{\alpha} k$ with $\gamma\alpha\in I$, thus after this step, $Q'$ is 
  \[\begin{tikzcd}[sep=small]
	&& \textcolor{rgb,255:red,244;green,62;blue,65}{k} && \\
	\\
	a &&&& c
	\arrow["{\rho^*}", color={rgb,255:red,244;green,62;blue,65}, tail reversed, no head, from=1-3, to=1-3, loop, in=55, out=125, distance=10mm]
	\arrow["{\alpha^*}"', color={rgb,255:red,244;green,62;blue,65}, from=1-3, to=3-1]
	\arrow["\beta", from=1-3, to=3-5]
	\arrow["{[\gamma\alpha]}", shift left=3, color={rgb,255:red,244;green,62;blue,65}, from=3-5, to=1-3]
\end{tikzcd}\]
and $I'=\{\beta[\gamma\alpha],[\gamma\alpha]\alpha^*,(\rho^*)^2\}$,

 \item[\textbf{Step 4}:] There is a non zero path $a\xra{\alpha}k\xra{\rho}k\xra{\beta}c$ in $Q_1$, thus after this step, $Q'$ is 
   \[\begin{tikzcd}[sep=small]
	&& \textcolor{rgb,255:red,244;green,62;blue,65}{k} && \\
	\\
	a &&&& c
	\arrow["{\rho^*}", color={rgb,255:red,244;green,62;blue,65}, tail reversed, no head, from=1-3, to=1-3, loop, in=55, out=125, distance=10mm]
	\arrow["{\alpha^*}"', color={rgb,255:red,244;green,62;blue,65}, from=1-3, to=3-1]
	\arrow["{[\alpha\rho\beta]}", color={rgb,255:red,244;green,62;blue,65}, from=3-1, to=3-5]
	\arrow["{[\gamma\alpha]}"', color={rgb,255:red,244;green,62;blue,65}, from=3-5, to=1-3]
\end{tikzcd}\]
and $I'=\{\alpha^*[\alpha\rho\beta],[\alpha\rho\beta][\gamma\alpha],[\gamma\alpha]\alpha^*,(\rho^*)^2\}$.
\end{itemize}
 \end{example}

\subsection{With a 2-cycle but no loop}

\begin{example}\label{eg:mutation in without loop and have 2-cycle}
     Let $(Q,I)$ be a gentle pair, where $Q$ is as follows:
\[\begin{tikzcd}
	{Q=} & a &&& k &&& b &&& c
	\arrow["{{{{\alpha_1}}}}", from=1-2, to=1-5]
	\arrow["{{{{\beta_2}}}}", shift left=5, tail reversed, no head, from=1-2, to=1-5]
	\arrow["{\gamma_1}"', shift right=5, curve={height=18pt}, from=1-2, to=1-8]
	\arrow["{{{{\beta_1}}}}", shift left=5, from=1-5, to=1-2]
	\arrow["{{{{\alpha_2}}}}", tail reversed, no head, from=1-5, to=1-8]
	\arrow["{\gamma_2}"', from=1-8, to=1-11]
\end{tikzcd}\]
with  $I=\{\alpha_2\beta_2,\beta_2\alpha_1,\alpha_1\beta_1,\beta_1\gamma_1,\gamma_1\alpha_2\}$. Then $(Q',I')$ is obtained from the following steps:
\begin{itemize}
 \item[\textbf{Step 1}:] $Q'=Q, I'=I$;
  \item[\textbf{Step 2}:] Since there are two non-loop incoming arrows $\alpha_1,\alpha_2$ at $k$ with $\alpha_1\beta_1,\alpha_2\beta_2\in I$, it follows that
  \[\begin{tikzcd}
	{Q'=} & a &&& \textcolor{rgb,255:red,214;green,92;blue,92}k &&& b &&& c
	\arrow["{{{{\alpha_1^*}}}}", color={rgb,255:red,240;green,66;blue,78}, tail reversed, no head, from=1-2, to=1-5]
	\arrow["{{{{\beta_2}}}}", shift left=5, tail reversed, no head, from=1-2, to=1-5]
	\arrow["{\gamma_1}"', shift right=5, curve={height=18pt}, from=1-2, to=1-8]
	\arrow["{{{{\beta_1}}}}", shift left=5, from=1-5, to=1-2]
	\arrow["{{{{\alpha_2^*}}}}", color={rgb,255:red,240;green,66;blue,78}, from=1-5, to=1-8]
	\arrow["{\gamma_2}"', from=1-8, to=1-11]
\end{tikzcd}\]
where $I'=\{\beta_2\alpha_1,\beta_1\gamma_1,\gamma_1\alpha_2\}$;
\item[\textbf{Step 3}:] Since there is a path $p_{\alpha_1}^-=\alpha_1\beta_2\in\mathcal{P}_k(a,a)$ with $p_{\alpha_1}^-\alpha_1\in \langle I\rangle$ and $p_{\alpha_2}^-=\gamma_1$ with $p_{\alpha_2}^-\alpha_2\in\langle I\rangle$, we obtain:
\[\begin{tikzcd}
	{Q'=} & a &&& \textcolor{rgb,255:red,214;green,92;blue,92}k &&& b &&& c
	\arrow["{{{{{\alpha_1^*}}}}}", color={rgb,255:red,240;green,66;blue,78}, tail reversed, no head, from=1-2, to=1-5]
	\arrow["{[\alpha_1\beta_2\alpha_1]}" , shift left=5, color={rgb,255:red,240;green,66;blue,78}, from=1-2, to=1-5]
	\arrow["{[\gamma_1\alpha_2]}"', shift right=5, color={rgb,255:red,240;green,66;blue,78}, curve={height=24pt}, from=1-2, to=1-5]
	\arrow["{{{{{\beta_1}}}}}", shift left=5, from=1-5, to=1-2]
	\arrow["{{{{{\alpha_2^*}}}}}", color={rgb,255:red,240;green,66;blue,78}, from=1-5, to=1-8]
	\arrow["{{\gamma_2}}"', from=1-8, to=1-11]
\end{tikzcd}\]
$I'=\{[\alpha_1\beta_2\alpha_1]\alpha_2^*,\alpha_1^*[\alpha_1\beta_2\alpha_1],[\gamma_1\alpha_2]\alpha_1^*,\beta_1[\gamma_1\alpha_2]\}.$
 \item[\textbf{Step 4}:] Since there is a path $\beta_1:k\to a$ such that $\alpha_2\beta_1\notin\langle I\rangle$ and for $\alpha_1:a\to k$, $\beta_1\alpha_1\notin\langle I\rangle$, we obtain:
 \[\begin{tikzcd}
	{Q'=} & a &&& \textcolor{rgb,255:red,240;green,66;blue,78}{k} &&& b &&& c
	\arrow["{{{{{\alpha_1^*}}}}}", color={rgb,255:red,240;green,66;blue,78}, tail reversed, no head, from=1-2, to=1-5]
	\arrow["{[\alpha_1\beta_2\alpha_1]}", shift left=5, color={rgb,255:red,240;green,66;blue,78}, from=1-2, to=1-5]
	\arrow["{[\gamma_1\alpha_2]}"', shift right=5, color={rgb,255:red,240;green,66;blue,78}, from=1-2, to=1-5]
	\arrow["{{{{{\alpha_2^*}}}}}", color={rgb,255:red,240;green,66;blue,78}, from=1-5, to=1-8]
	\arrow["{[\alpha_2\beta_1]}"', shift left=4, color={rgb,255:red,240;green,66;blue,78}, curve={height=-30pt}, from=1-8, to=1-2]
	\arrow["{{\gamma_2}}"', from=1-8, to=1-11]
\end{tikzcd}\]
with $I'=\{\alpha_2^*[\alpha_2\beta_1],[\alpha_1\beta_2\alpha_1]\alpha_2^*,\alpha_1^*[\alpha_1\beta_2\alpha_1],[\gamma_1\alpha_2]\alpha_1^*,[\alpha_2\beta_1][\gamma_1\alpha_2]\}.$
\end{itemize}
 \end{example}
\begin{example}
     When the gentle pair $(Q,I)$ are as follows: 
   \[\begin{tikzcd}[sep=small]
	{Q_1=} & a &&& k &&& \\
	\\
	{Q_2=} & a &&& k &&& b
	\arrow["\gamma"', shift right=3, tail reversed, no head, from=1-2, to=1-5]
	\arrow["{\alpha}", shift left=4, from=1-2, to=1-5]
	\arrow["{\alpha_1}", shift left=3, from=3-2, to=3-5]
	\arrow["{\gamma_1}", shift left=3, from=3-5, to=3-2]
	\arrow["{\alpha_2}", shift left=3, tail reversed, no head, from=3-5, to=3-8]
	\arrow["{\gamma_2}"', shift right=3, from=3-5, to=3-8]
\end{tikzcd}\]
with $I_1=\{\gamma\alpha\}$ and $I_2=\{\alpha_1\gamma_1,\gamma_1\alpha_1,\alpha_2\gamma_2,\gamma_2\alpha_2\}$ or $I_2=\{\gamma_1\alpha_1,\alpha_1\gamma_2,\gamma_2\alpha_2,\alpha_2\gamma_1\}.$
Then $$\mu_k^+(Q_i,I_i)=(Q_i,I_i),$$ for each $i=1,2$.
 \end{example}


\subsection{With a loop and a 2-cycle}

\begin{prop}
Suppose that there is a loop $\rho$ at $k$ and a 2-cycle at $k$. Then $\mu_k^+(Q,I)=(Q,I)$, i.e., $$\End_A(T)\cong A=\bbk Q/\langle I\rangle.$$
\end{prop}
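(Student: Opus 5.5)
The plan is to compute $\mu_k^+(Q,I)$ explicitly from Definition~\ref{d: tilting mutation in general} and to exhibit an isomorphism of gentle pairs with $(Q,I)$. Theorem~\ref{t: tilitng mutation in general} then gives $\End_A(T)\cong\bbk\mu_k^+(Q)/\langle\mu_k^+(I)\rangle\cong A$.

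\emph{Local configuration at $k$.} By (G1), since $\rho$ is both incoming and outgoing at $k$, there is at most one further incoming arrow and at most one further outgoing arrow. A 2-cycle at $k$ therefore consists of $\alpha\colon a\to k$ and $\beta\colon k\to a$ with $a\neq k$. The facts recalled after the definition of gentle algebras give
\[
\rho^2\in I,\qquad \alpha\rho\notin I,\qquad \rho\beta\notin I,\qquad \alpha\beta\in I.
\]
Theorem~\ref{t: conditions for approximation is injective0} shows that the mutation exists, since $\din(k)=2$.

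I next show $\beta\alpha\in I$. If $\beta\alpha\notin I$, then the path $\alpha\rho\beta\alpha\rho\beta\cdots$ has no subpath in $I$, so it is nonzero of arbitrary length. This contradicts finite dimensionality. By (G2) and (G3) applied to $\alpha$ and $\beta$, any other arrow $\delta$ ending at $a$ satisfies $\delta\alpha\notin I$. Likewise any other arrow $\varepsilon$ starting at $a$ satisfies $\beta\varepsilon\notin I$. Hence, by (G3), $\delta\varepsilon\in I$ whenever both exist.

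\emph{The paths $p_\alpha^{\pm}$.} A nonzero path $p$ with $p\alpha\in\langle I\rangle$ must end with $\beta$. Its source must differ from $k$, and it lies in $\mathcal P_k(\,\cdot\,,a)$. The candidates are $\beta$, $\rho\beta$, $\alpha\rho\beta$, and the first two start at $k$. So $p_\alpha^-=\alpha\rho\beta$, which gives the T3 arrow $[\alpha\rho\beta\alpha]\colon a\to k^*$ with $\Ini(p_\alpha^-)=\alpha$.

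For $p_\alpha^+$, I check the candidates starting at $k$ whose internal vertices are all equal to $k$. The only one ending at a vertex other than $k$ with $\alpha p$ nonzero is $\rho\beta$. It fails the requirement because $\rho\beta\alpha\in\langle I\rangle$. Any extension passes through the internal vertex $a$, and the path $\beta$ alone has $\alpha\beta\in I$. So $p_\alpha^+$ does not exist and Step~4 is vacuous. Checking this non-existence carefully against the definition of $p_\alpha^+$, in particular that it is meant to run through $\mathcal P_k$, is the step I would double-check most, since it is where the fixed-point phenomenon originates.

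\emph{Carrying out the steps.} Step~2 replaces $\alpha$ by $\alpha^*\colon k^*\to a$ and $\rho$ by $\rho^*$. It deletes $\alpha\beta$ and replaces $\rho^2$ by $(\rho^*)^2$. Step~3 replaces $\gamma_\alpha=\beta$ by $[p_\alpha^-\alpha]\colon a\to k^*$ and deletes $\beta\alpha$. It adds $[p_\alpha^-\alpha]\alpha^*$, since there is a loop at $k$, and adds $\alpha^*[p_\alpha^-\alpha]$, since $\Ini(p_\alpha^-)=\alpha$. The clause about $\beta\gamma_\alpha$ does not apply, because $p_\alpha^-\neq\gamma_\alpha$.

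The remaining arrows $\delta,\varepsilon$ at $a$ and all other arrows lie in $\AfixQ$. By Definition~\ref{d:quadratic relations}, equivalently Proposition~\ref{p: construction of Ib}, the only relations involving them that could change are the following:
\begin{itemize}
\item $\delta[p_\alpha^-\alpha]$, which lies in $I_\mb$ iff $\delta\alpha\rho\beta\in\langle I\rangle$; this fails because $\delta\alpha\notin I$;
\item $\alpha^*\varepsilon$, which lies in $I_\mb$ iff $\Ini(\varepsilon)=\alpha$; this is false.
\end{itemize}
The relation $\delta\varepsilon$ is unchanged. So $\mu_k^+(Q,I)$ is obtained from $(Q,I)$ by the substitutions $\alpha\mapsto[p_\alpha^-\alpha]$, $\beta\mapsto\alpha^*$, $\rho\mapsto\rho^*$ and $k\mapsto k^*$. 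These carry the relations $\alpha\beta,\beta\alpha,\rho^2$ to $[p_\alpha^-\alpha]\alpha^*$, $\alpha^*[p_\alpha^-\alpha]$, $(\rho^*)^2$, and they fix every other arrow and relation. This is an isomorphism of gentle pairs, which proves the statement.
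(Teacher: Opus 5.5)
Your proposal is correct and follows essentially the same route as the paper: run Steps 2--4 of Definition~\ref{d: tilting mutation in general} with $p_\alpha^-=\alpha\rho\beta$, observe that $p_\alpha^+$ does not exist (reading its definition through $\mathcal P_k$, as the paper implicitly does), and then match $\rho^*,\alpha^*,[p_\alpha^-\alpha]$ and the relations $(\rho^*)^2,[p_\alpha^-\alpha]\alpha^*,\alpha^*[p_\alpha^-\alpha]$ with the original arrows and the relations $\rho^2,\alpha\beta,\beta\alpha$. Your extra checks fill in details the paper leaves implicit, namely deriving $\beta\alpha\in I$ from finite-dimensionality and verifying that relations through the other arrows at $a$ are unaffected; your orientation-preserving matching $\alpha\mapsto[p_\alpha^-\alpha]$, $\beta\mapsto\alpha^*$ is the correct one (the paper's ``respectively'' pairs $\alpha^*$ with $\alpha$ and $[p_\alpha^-\alpha]$ with $\beta$, which reverses arrow directions).
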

\begin{proof}
   Suppose that the loop and the 2-cycle incident to $k$ are as follows:
\[\begin{tikzcd}[sep=small]
Q=	&\cdots\cdots  a &&& k
	\arrow["\alpha",shift left=2, from=1-2, to=1-5]
	\arrow["\beta", shift left=2, from=1-5, to=1-2]
	\arrow["\rho", from=1-5, to=1-5, loop, in=55, out=125, distance=10mm]
\end{tikzcd}\]
Since $A$ is finite-dimensional and there is a loop $\rho$ at $k$, we obtain 
 $\{\alpha\beta,\beta\alpha,\rho^2\}\subset I$. Denote by $\mu_k^+(Q,I)=(Q',I')$.
 
 Note that since there is a loop $\rho$ and a non-loop incoming arrow $\alpha$ at $k$,  it follows that after $\textbf{Step 2}$, there is a loop $\rho^*$ and an outgoing arrow $\alpha^*$ at $k$ in the new quiver $Q'$ with $(\rho^*)^2\in I'$ and $I'=I\setminus\{\alpha\beta,\rho^2\}\cup\{(\rho^*)^2\}.$ Furthermore, because there is a nonzero path $p_{\alpha}^-=\alpha\rho\beta\in\mathcal{P}_k(a,a)$ such that $p_\alpha^-\alpha\in\langle I\rangle$ with $\beta=\Ter(p_{\alpha}^-)$, then  after $\textbf{Step 3}$,  there is an arrow $[p_\alpha^-\alpha]:a\to k$ in $Q'$, 
thus , 
 \[\begin{tikzcd}[sep=small]
Q'=	&\cdots\cdots  a &&& k
	\arrow["{[p_{\alpha}^-\alpha]}",shift left=2, from=1-2, to=1-5]
	\arrow["{\alpha^*}", shift left=2, from=1-5, to=1-2]
	\arrow["{\rho^*}", from=1-5, to=1-5, loop, in=55, out=125, distance=10mm]
\end{tikzcd}\]
and \[I'=(I\setminus\{\rho^2,\alpha\beta,\beta\alpha\})\cup\{(\rho^*)^2,\alpha^*[p_\alpha^-\alpha],[p_\alpha^-\alpha]\alpha^*\}.\]
Because $p_{\alpha}^+$ doesn't exist, thus there is no $\textbf{Step 4}$. 
Thus we get the three arrows $\rho^*,\alpha^*,[p_\alpha^-\alpha]$ correspond to $\rho,\alpha,\beta$ respectively, and the three quadratic relations correspond to $\rho^2,\alpha\beta,\beta\alpha$ respectively. Hence $\mu_k^+(Q,I)=(Q,I)$ directly.
\end{proof}

\section{Cotilting mutation of gentle algebras}\label{s:s8 cotilting}

\subsection{Cotilting mutation}\label{ss:cotilting-mutation}
Let $A=\bbk Q/\langle I\rangle$ be a gentle algebra.
Recall that a finitely generated $A$-module $C$ is a \emph{cotilting module} if
\begin{enumerate}
    \item $\operatorname{id}_A C\leq 1$;
    \item $\Ext_A^1(C,C)=0$;
    \item there exists an exact sequence
    \[
    0 \longrightarrow C_1 \longrightarrow C_0 \longrightarrow D(A) \longrightarrow 0
    \]
    with $C_0,C_1\in\add C$, where $D=\Hom_{\bbk}(-,\bbk)$ denotes the standard $\bbk$-duality.
\end{enumerate}

Let $k\in Q_0$ and decompose the minimal injective cogenerator as
\[
D(A) = I(k) \oplus \overline C,
\qquad
\overline C := \bigoplus_{\ell \neq k} I(\ell).
\]
Suppose there exists an indecomposable $A$-module $I(k)^*\not\simeq I(k)$ such that
\[
C := \overline C \oplus I(k)^*
\]
is a basic cotilting $A$-module.

\begin{definition}
Under these assumptions, we call $C$ the \emph{cotilting mutation of the injective cogenerator $D(A)$ at $I(k)$}. The module $I(k)^*$ is called the \emph{exchange complement} of $I(k)$, and the algebra
\[
\mu_k^-(A):=\End_A(C)
\]
is called the \emph{cotilting mutation of $A$ at $k$}.
\end{definition}

\begin{remark}
The cotilting mutation $\mu_k^-(A)$ exists if and only if the tilting mutation $\mu_k^+(A^{\operatorname{op}})$ exists. This follows directly from the standard duality between tilting and cotilting modules over opposite algebras.
\end{remark}

\subsection{Combinatorial description and relation to tilting mutation}
When $(Q,I)$ is a gentle pair, the opposite pair $(Q^{\operatorname{op}}, I^{\operatorname{op}})$ is also gentle. This allows us to describe cotilting mutation combinatorially in terms of the tilting mutation established above.

\begin{definition}\label{def:cotilting_pair}
Let $(Q,I)$ be a gentle pair. We define the \emph{cotilting mutation} of $(Q,I)$ at $k$ as the gentle pair
\[
\mu_k^-(Q,I) := \bigl(\mu_k^+(Q^{\operatorname{op}}, I^{\operatorname{op}})\bigr)^{\operatorname{op}}.
\]
\end{definition}

We recall the fundamental duality between tilting and cotilting modules, which connects the algebraic and combinatorial constructions.

\begin{theorem}[Standard $\bbk$-Duality]\label{thm:happel-duality}
Let $A$ be a finite-dimensional $\bbk$-algebra and $D=\Hom_{\bbk}(-,\bbk)$ the standard duality. For any finitely generated $A$-module $T$, $T$ is a tilting $A$-module if and only if $D(T)$ is a cotilting $A^{\operatorname{op}}$-module. Moreover, there is a natural algebra isomorphism
\[
\End_A(T) \cong \End_{A^{\operatorname{op}}}(D(T))^{\operatorname{op}}.
\]
\end{theorem}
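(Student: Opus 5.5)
The plan is to prove both parts through the duality $D=\Hom_{\bbk}(-,\bbk)$ between finitely generated right $A$-modules and finitely generated right $A^{\operatorname{op}}$-modules (equivalently, left $A$-modules). $D$ is exact, contravariant and involutive ($D^2\cong\id$), and it interchanges projectives with injectives. I will transport each of the three tilting axioms for $T$ to the corresponding cotilting axioms for $D(T)$, and then treat the endomorphism algebras separately. Since $D$ is an involution, it suffices to prove one implication; applying the same argument to $A^{\operatorname{op}}$ and $D(T)$ gives the converse.

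\textbf{Step 1: the first two axioms.}
\begin{enumerate}
\item \emph{Dimension bound.} A projective resolution $0\to P_1\to P_0\to T\to0$ of $T$ dualizes to an injective coresolution $0\to D(T)\to D(P_0)\to D(P_1)\to0$, because $D$ is exact and sends projectives to injectives. Hence $\operatorname{pd}_A T\le1$ if and only if $\operatorname{id}_{A^{\operatorname{op}}}D(T)\le1$.
\item \emph{Rigidity.} $D$ induces an isomorphism $\Ext^1_A(T,T)\cong\Ext^1_{A^{\operatorname{op}}}(D(T),D(T))$. One way to see this is to compute $\Ext^1$ with the projective resolution of the first argument and the dual injective coresolution. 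Alternatively, $D$ is an exact anti-equivalence, so it gives a bijection on Yoneda extension classes.
\end{enumerate}

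\textbf{Step 2: the third axiom.} Applying $D$ to $0\to A\to T_0\to T_1\to0$ produces
\[
0\to D(T_1)\to D(T_0)\to D(A)\to0 ,
\]
with $D(T_i)\in\add D(T)$. Here $D(A)$ is understood as the injective cogenerator $D({}_{A^{\operatorname{op}}}A^{\operatorname{op}})$, read on the correct side. This is exactly condition (3) for cotilting. Because $D$ is an involution, the converse implication is the same argument applied in the other direction.

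\textbf{Step 3: endomorphism algebras.} A morphism $f\colon T\to T$ is sent to $D(f)\colon D(T)\to D(T)$. The assignment $f\mapsto D(f)$ is a $\bbk$-linear bijection, since $D$ is fully faithful. It reverses composition: $D(fg)=D(g)D(f)$. It therefore gives an algebra isomorphism $\End_A(T)\cong\End_{A^{\operatorname{op}}}(D(T))^{\operatorname{op}}$. Naturality in $T$ follows because $D$ is a functor.

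\textbf{Main obstacle.} The only delicate step is the bookkeeping of sides. Right $A$-modules dualize to left $A$-modules, that is, right $A^{\operatorname{op}}$-modules. So one must check that "projective of $A$" corresponds to "injective of $A^{\operatorname{op}}$" and that the regular module dualizes to the injective cogenerator of the correct algebra. I would verify this once explicitly, using $D(e_iA)\cong D(A)e_i$ as indecomposable injectives. After that, all remaining steps are formal consequences of exactness and the involutivity of $D$.
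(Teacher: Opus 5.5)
Your proposal is correct. The paper states this result as a recalled standard fact without proof, and your argument is exactly the standard one it relies on: you transport the three tilting axioms through the exact contravariant involution $D$, reading $D(A_A)$ as the injective cogenerator of right $A^{\operatorname{op}}$-modules, and you use that $D$ reverses composition to obtain $\End_A(T)\cong\End_{A^{\operatorname{op}}}(D(T))^{\operatorname{op}}$. One small imprecision is your opening claim that the converse follows by ``applying the same argument to $A^{\operatorname{op}}$ and $D(T)$''; what is actually needed is the dual argument, namely dualizing the cotilting sequence, which you carry out in Step~2 anyway, so nothing is missing.
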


Using this duality together with the combinatorial description of tilting mutation, we obtain the following result.

\begin{theorem}\label{t:cotilting_mutation_isomorphism}
Let $(Q,I)$ be a gentle pair, let $A=\bbk Q/\langle I\rangle$, and set $(Q',I')=\mu_k^-(Q,I)$. Then the cotilting mutation algebra satisfies
\[
\mu_k^-(A)\cong \bbk Q'/\langle I'\rangle
\]
as $\bbk$-algebras.
\end{theorem}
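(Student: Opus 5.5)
The plan is to reduce everything to Theorem~\ref{t: tilitng mutation in general} applied to the opposite algebra, via the duality of Theorem~\ref{thm:happel-duality}. First I identify $A^{\operatorname{op}}$ with $\bbk Q^{\operatorname{op}}/\langle I^{\operatorname{op}}\rangle$, which is again gentle. Under the duality $D$ between right $A$-modules and right $A^{\operatorname{op}}$-modules (that is, left $A$-modules), the injective right $A$-module $I(\ell)=D(Ae_\ell)$ corresponds to the indecomposable projective $A^{\operatorname{op}}$-module $P^{\operatorname{op}}(\ell)$ at the same vertex $\ell$ of $Q^{\operatorname{op}}$. Hence $D(D(A))=A^{\operatorname{op}}$ and $D(\overline C)=\bigoplus_{\ell\neq k}P^{\operatorname{op}}(\ell)=:\overline T^{\operatorname{op}}$.

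Next I show that $D(C)$ is exactly the generalized BB-tilting module $T^{\operatorname{op}}=\overline T^{\operatorname{op}}\oplus P^{\operatorname{op}}(k)^*$ of $A^{\operatorname{op}}$ at $k$. By Theorem~\ref{thm:happel-duality}, $D(C)=\overline T^{\operatorname{op}}\oplus D(I(k)^*)$ is a basic tilting $A^{\operatorname{op}}$-module. Its summand $D(I(k)^*)$ is indecomposable and not isomorphic to $P^{\operatorname{op}}(k)$. By the uniqueness of complements of the almost complete tilting module $\overline T^{\operatorname{op}}$ (it is faithful, having the projective summands $\overline T^{\operatorname{op}}$ over a connected algebra with $|Q_0|\ge 2$, and the results of \cite{HU1989, CHU94} apply), there are at most two complements. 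One of them is $P^{\operatorname{op}}(k)$, and the other is the cokernel of the minimal left $\add\overline T^{\operatorname{op}}$-approximation of $P^{\operatorname{op}}(k)$. Therefore $D(I(k)^*)\cong P^{\operatorname{op}}(k)^*$, and the approximation is a monomorphism. Thus the tilting mutation of $A^{\operatorname{op}}$ at $k$ exists, and Theorem~\ref{t: conditions for approximation is injective0} holds for $(Q^{\operatorname{op}},I^{\operatorname{op}})$ at $k$. This is the step I expect to need the most care: I have to check that the exchange complement in Definition~\ref{def:tilting-mutation} really is the unique non-projective complement, and I have to be precise about which side the modules live on, so that $I(k)$ matches $P^{\operatorname{op}}(k)$ and not some other vertex.

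Then I compute. Theorem~\ref{thm:happel-duality} gives
\[
\mu_k^-(A)=\End_A(C)\cong \End_{A^{\operatorname{op}}}(D(C))^{\operatorname{op}}=\bigl(\mu_k^+(A^{\operatorname{op}})\bigr)^{\operatorname{op}}.
\]
Theorem~\ref{t: tilitng mutation in general}, applied to the gentle pair $(Q^{\operatorname{op}},I^{\operatorname{op}})$, gives
\[
\mu_k^+(A^{\operatorname{op}})\cong \bbk\,\mu_k^+(Q^{\operatorname{op}})/\langle \mu_k^+(I^{\operatorname{op}})\rangle.
\]
Finally, for any bound quiver the opposite algebra satisfies $(\bbk Q''/\langle I''\rangle)^{\operatorname{op}}\cong \bbk Q''^{\operatorname{op}}/\langle I''^{\operatorname{op}}\rangle$. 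This isomorphism reverses arrows and reverses each monomial relation, which is still a set of length-two paths, so the quotient stays gentle.

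Combining these three isomorphisms with Definition~\ref{def:cotilting_pair} yields $\mu_k^-(A)\cong\bbk Q'/\langle I'\rangle$. Along the way I also record that the definition of $\mu_k^-(Q,I)$ makes sense precisely when the tilting mutation condition of Theorem~\ref{t: conditions for approximation is injective0} holds on the opposite pair. Read back in $Q$, that condition says: for every incoming arrow $\beta$ at $k$ there is an outgoing arrow $\alpha$ at $k$ with $\beta\alpha\notin I$.
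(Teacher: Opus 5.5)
Your argument follows the same route as the paper. The paper gives no separate proof: it combines the duality $\End_A(C)\cong\End_{A^{\operatorname{op}}}(D(C))^{\operatorname{op}}$ with Theorem~\ref{t: tilitng mutation in general} applied to $(Q^{\operatorname{op}},I^{\operatorname{op}})$ and then passes to opposite bound quivers. Your identification of $D(C)$ with the generalized BB-tilting module of $A^{\operatorname{op}}$ at $k$ spells out what the paper leaves to the remark preceding Definition~\ref{def:cotilting_pair}.

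One justification in that identification is wrong, though. $\overline T^{\operatorname{op}}=\bigoplus_{\ell\neq k}P^{\operatorname{op}}(\ell)$ is \emph{not} faithful merely because it is the sum of all but one indecomposable projective over a connected algebra with at least two vertices. Faithfulness means that $A^{\operatorname{op}}$ embeds into a module of $\add\overline T^{\operatorname{op}}$. That is equivalent to the minimal left $\add\overline T^{\operatorname{op}}$-approximation of $P^{\operatorname{op}}(k)$ being a monomorphism, i.e.\ to the existence criterion of Theorem~\ref{t: conditions for approximation is injective0} for $(Q^{\operatorname{op}},I^{\operatorname{op}})$. This criterion fails, for instance, whenever $k$ has an incoming but no outgoing arrow in $Q$.

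In your situation faithfulness must instead come from the hypothesis. $P^{\operatorname{op}}(k)$ and $D(I(k)^*)$ are two non-isomorphic complements of $\overline T^{\operatorname{op}}$, and by Happel--Unger \cite{HU1989} an almost complete tilting module has two complements only if it is faithful. Faithfulness then makes the approximation injective, since any monomorphism from $P^{\operatorname{op}}(k)$ into $\add\overline T^{\operatorname{op}}$ factors through it. Theorem~\ref{t: conditions for approximation is injective0} then gives that $\overline T^{\operatorname{op}}\oplus P^{\operatorname{op}}(k)^*$ is tilting. Uniqueness of the non-projective complement yields $D(I(k)^*)\cong P^{\operatorname{op}}(k)^*$, and the rest of your proof goes through unchanged.
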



\begin{thebibliography}{99}
\bibitem{A}
  T.~Aihara, Derived equivalences between symmetric special biserial algebras,
 \emph{J. Pure Appl. Algebra}, \textbf{219} (2015), no.~5, 1800--1825.
  
\bibitem{AI12}
T.~Aihara and O.~Iyama,
Silting mutation in triangulated categories,
\emph{J. Lond. Math. Soc. (2)} \textbf{85} (2012), no.~3, 633--668.

\bibitem{AS87}
I.~Assem and A.~Skowro\'nski,
Iterated tilted algebras of type $\widetilde{\mathbb A}_n$,
\emph{Math. Z.} \textbf{195} (1987), no.~2, 269--290.

\bibitem{APR}
M.~Auslander, M.~I.~Platzeck, and I.~Reiten,
Coxeter functors without diagrams,
\emph{Trans. Amer. Math. Soc.} \textbf{250} (1979), 1--46.

\bibitem{BS21}
K.~Baur and R.~Coelho Sim\~oes,
A geometric model for the module category of a gentle algebra,
\emph{Int. Math. Res. Not. IMRN} \textbf{2021}, no.~15, 11357--11392.

\bibitem{BGP}
J.~H.~Bernstein, I.~M.~Gel'fand, and V.~A.~Ponomarev, 
 Coxeter functors and Gabriel's theorem,
  \emph{Russian Math. Surveys},
\textbf{28} (1973), no.~2, 17--32.

 \bibitem{BB12}
G.~Bobi\'nski and A.~B.~Buan,
The algebras derived equivalent to gentle cluster tilted algebras,
\emph{J. Algebra Appl.} \textbf{11} (2012), no.~1,
1250012, 26~pp.

\bibitem{BB1980}
S.~Brenner and M.~C.~R.~Butler,
Generalizations of the Bernstein–Gelfand–Ponomarev reflection functors,
in \emph{Representation Theory II},
Lecture Notes in Math. \textbf{832},
Springer, Berlin, 1980, 103--169.

\bibitem{BIRS}
 A.~B.~Buan, O.~Iyama, I.~Reiten, and D.~Smith,
 Mutation of cluster-tilting objects and potentials,
 \emph{Amer. J. Math.} \textbf{133} (2011), no.~4, 835--887.

\bibitem{BMR}
A.~B.~Buan, B.~R.~Marsh, I.~Reiten,
Cluster mutation via quiver representations,
\emph{Comment. Math. Helv. } \textbf{83} (2008), no.~1,  143--177.

\bibitem{BMRRT}
A.~B.~Buan, B.~R.~Marsh, M.~Reineke, I.~Reiten, and G.~Todorov,
Tilting theory and cluster combinatorics,
\emph{Adv. Math.} \textbf{204} (2006), no.~2, 572--618.

\bibitem{Chang24}
W.~Chang,
Tilting-completion for gentle algebras,
arXiv:2412.13971, 2024.

\bibitem{CSc23}
W.~Chang and S.~Schroll,
A geometric realization of silting theory for gentle algebras,
\emph{Math. Z.} \textbf{303} (2023), no.~3, Paper No.~67, 37~pp.

\bibitem{CHU94}
F.~U.~Coelho, D.~Happel, and L.~Unger,
Complements to partial tilting modules,
\emph{J. Algebra} \textbf{170} (1994), no.~1, 184--205.

\bibitem{DGL26}
D.~Deng, S.~Geng, and P.~Liu,
Geometric models for endomorphism algebras of tilting modules over gentle algebras,
arXiv:2607.19945, 2026.

\bibitem{Fosse}
D.~Fosse,
A combinatorial procedure for tilting mutation,
arXiv:2112.08129, 2021.


\bibitem{HU1989} 
D.~Happel and L.~Unger,
Almost complete tilting modules,
\emph{Proc. Amer. Math. Soc.} \textbf{107} (1989), no.~3, 603--610.


\bibitem{HU1998}
D.~Happel and L.~Unger,
Complements and the generalized Nakayama conjecture,
in \emph{Algebras and Modules, II (Geiranger, 1996)},
CMS Conf. Proc., vol.~24, Amer. Math. Soc., Providence, RI, 1998, 293--310.

\bibitem{IR}
 O.~Iyama and I.~Reiten,
 Fomin--Zelevinsky mutation and tilting modules over Calabi--Yau Algebras,
 \emph{Amer. J. Math.}  \textbf{130} (2008), no.~4, 1087--1149.
 
\bibitem{IY}
O.~Iyama and Y.~Yoshino,
Mutation in triangulated categories and rigid Cohen--Macaulay modules,
\emph{Invent. Math.} \textbf{172} (2008),  no.~1, 117--168.

\bibitem{KY2011}
B.~Keller and D.~Yang,
Derived equivalences from mutations of quivers with potential,
\emph{Adv. Math.} \textbf{226} (2011),  no.~3, 2118--2168.

\bibitem{KY2014}
S.~K\"onig and D.~Yang,
   Silting objects, simple-minded collections, t-structures and co-t-structures for finite-dimensional algebras, 
   \emph{Doc. Math.} \textbf{19} (2014), 403--438.
 
\bibitem{Ladkani11}
S.~Ladkani,
Mutation classes of certain quivers with potentials as derived equivalence classes,
arXiv:1102.4108, 2011.

\bibitem{Opp17}
S.~Oppermann,
Quivers for silting mutation,
\emph{Adv. Math.} \textbf{307} (2017), 684--714.

\bibitem{OPS}
S.~Opper, P.-G.~Plamondon and S.~Schroll,
 A geometric model for the derived category of gentle algebras,
arXiv:1801.09659, 2018.

\bibitem{RS91}
C.~Riedtmann and A.~Schofield,
On a simplicial complex associated with tilting modules,
\emph{Comment. Math. Helv.} \textbf{66} (1991), 70--78.

\bibitem{Saleh26}
I.~Saleh,
On the mutations of gentle quivers and admissible ideals,
arXiv:2607.28767, 2026.

\bibitem{S99}
J.~Schr\"oer,
Modules without self-extensions over gentle algebras,
\emph{J. Algebra} \textbf{216} (1999), no.~1, 178--189.

\bibitem{SZ03}
J.~Schr\"oer and A.~Zimmermann,
Stable endomorphism algebras of modules over special biserial algebras,
\emph{Math. Z.} \textbf{244} (2003), no.~3, 515--530.

\end{thebibliography}
\end{document}